\documentclass[a4paper,reqno,twoside]{amsart}
\usepackage[left=3.5cm,right=3.5cm,top=3.5cm,bottom=3.5cm,footskip=1.5cm,headsep=1cm,bindingoffset=0.cm,]{geometry}
\usepackage{hyperref}
\hypersetup{hidelinks}
\usepackage{orcidlink}
\usepackage{amssymb}
\usepackage{bm}
\usepackage{cleveref}
\usepackage{cite}
\crefname{equation}{}{}
\crefname{figure}{Figure}{Figures}
\crefname{table}{Table}{Tables}
\crefname{section}{Section}{Sections}
\crefname{theorem}{Theorem}{Theorems}
\crefname{proposition}{Proposition}{Propositions}
\crefname{lemma}{Lemma}{Lemmas}
\crefname{corollary}{Corollary}{Corollaries}

\newtheorem{theorem}{Theorem}[section]
\newtheorem{proposition}[theorem]{Proposition}
\newtheorem{lemma}[theorem]{Lemma}

\theoremstyle{definition}
\newtheorem{definition}[theorem]{Definition}

\theoremstyle{remark}
\newtheorem{remark}[theorem]{Remark}

\numberwithin{equation}{section}

\begin{document}

\title[A frequency-stable Nystr\"om method]{A frequency-stable Nystr\"om method for two-dimensional diffraction by periodic gratings at and near Wood anomalies}

\author[K. Matsushima]{Kei Matsushima\,\orcidlink{0000-0002-0352-8770}}
\address{\parbox{\linewidth}{Kei Matsushima\\
  Informatics and Data Science Program, Graduate School of Advanced Science and Engineering / International Institute for Sustainability with Knotted Chiral Meta Matter (WPI-SKCM2), Hiroshima University, Higashi-Hiroshima, Japan, \href{https://orcid.org/0000-0002-0352-8770}{orcid.org/0000-0002-0352-8770}}}
  \email{matsushima@acs.hiroshima-u.ac.jp}

\author[H. Isakari]{Hiroshi Isakari\,\orcidlink{0000-0002-7906-5477}}
\address{\parbox{\linewidth}{Hiroshi Isakari\\
  Department of System Design Engineering, Faculty of Science and Technology, Keio University, Yokohama, Japan, \href{https://orcid.org/0000-0002-7906-5477}{orcid.org/0000-0002-7906-5477}}}

\begin{abstract}
  We propose and analyze a frequency-stable boundary integral equation method for scalar wave scattering by periodic diffraction gratings in two dimensions. The standard quasi-periodic Green function is singular as a function of the frequency at Wood anomaly frequencies, which prevents a direct frequency-uniform error analysis of conventional integral equation discretizations. Our method is based on a modified quasi-periodic Green function obtained by subtracting the singular Rayleigh modes and introducing a finite-dimensional correction that restores the radiation condition. The resulting combined-field boundary integral equation is shown to be uniquely solvable whenever the underlying diffraction problem is uniquely solvable, and the corresponding scattered field depends continuously on the frequency, including at Wood anomalies. We then construct a fully discrete Nystr\"om scheme using an Ewald representation of the Green function and a quadrature rule adapted to the logarithmic singularity of the kernel. For compact frequency intervals that may contain Wood anomaly frequencies, we prove unique solvability of the discrete system for sufficiently large discretization and truncation parameters and establish a frequency-uniform error estimate and bound on the condition number. Numerical examples illustrate the convergence of the scheme and its stability across Wood anomaly frequencies.
\end{abstract}

\maketitle

\section{Introduction}
Let us consider a periodic curve $\Gamma_{\rm per}$ in $\mathbb R^2$. Precisely, let $L>0$ and let $\gamma:\mathbb R\to\mathbb R^2$ be an embedding of class $C^{1,1}$ such that $\gamma(t+2\pi)=\gamma(t)+Le_1$, where $e_1:=(1,0)$, and $|\gamma^\prime(t)|>0$ for all $t\in\mathbb R$. We define $\Gamma_\mathrm{per} := \gamma(\mathbb R)$ and its one-period segment $\Gamma:=\gamma([0,2\pi])$. 
We denote by $\Omega_+$ the connected component of $\mathbb R^2\setminus\Gamma_\mathrm{per}$ for which there exists $H_+ \in \mathbb R$ such that $\mathbb R \times (H_+,\infty) \subset \Omega_+$, and by $\Omega_-$ the component for which there exists $H_- \in \mathbb R$ such that $\mathbb R \times (-\infty,H_-) \subset \Omega_-$.

Let $\beta\in\mathbb R$ and let $C_\beta(\Gamma_\mathrm{per}):=\{ \varphi\in C(\Gamma_\mathrm{per}) : \varphi(x+Le_1)=\varphi(x)\mathrm e^{\mathrm i\beta} \}$ be the space of continuous $\beta$-quasi-periodic functions on $\Gamma_\mathrm{per}$ equipped with $\| \varphi\|_{C_\beta(\Gamma_\mathrm{per})} := \|\varphi\|_{C(\Gamma)}$. For given $k\in\mathbb R$ and $g\in C_\beta(\Gamma_\mathrm{per})$, we consider 

\begin{align*}
    (P_k) \quad \text{Find }&u\in C^2(\Omega_+)\cap C(\overline\Omega_+) \text{ such that }
    \\&
    \begin{cases}
        -\varDelta u - k^2 u = 0 &\text{in }\Omega_+,
        \\
        u(x+Le_1) = u(x)\mathrm e^{\mathrm i\beta} &\text{in }\Omega_+,
        \\
        u = g &\text{on }\Gamma_\mathrm{per},
        \\
        u \text{ satisfies the upward radiation condition,}
    \end{cases}
\end{align*}
where the definition of the upward radiation condition is given in the next section. 

The problem $(P_k)$ models the diffraction of scalar waves by periodic surfaces in two dimensions. Such diffraction problems arise, for example, in modeling of diffraction gratings and metasurfaces. Diffraction gratings are classical optical elements used in spectral analysis and laser systems \cite{bonod2016diffraction}, whereas metasurfaces are designed to control scattering amplitude, phase, and polarization and to shape optical wavefronts \cite{chen2016review}.

This type of diffraction problem, which couples a Helmholtz-type equation with a linear boundary condition, a quasi-periodicity condition, and a radiation condition, has a well-established mathematical theory. For instance, a diffraction problem for Maxwell's system is uniquely solvable except for a discrete set of frequencies \cite{nedelec1991integral}. For scalar diffraction problems with Dirichlet boundary conditions, an overhang-free geometry provides a standard sufficient condition for uniqueness of the homogeneous problem \cite{kirsch1993diffraction}. These issues are closely related to spectral theory for periodic or quasi-periodic differential operators, as well as to the possible occurrence of guided modes \cite{alber1979quasi,wilcox1984scattering,bonnetbendhia1994guided}.

In view of the practical importance of diffraction analyses, we are interested in numerical approximation of solutions to diffraction problems. A promising approach is to formulate the diffraction problem on a bounded computational domain using perfectly matched layers or transparent boundary conditions, and then discretize the resulting weak formulation by finite elements \cite{chen2003adaptive,wang2015adaptive}. Another approach is based on boundary integral equations. Green-function formulations of grating diffraction lead to boundary integral equations on the periodic surface \cite{berg1971diffraction}. Many numerical schemes have been developed based on boundary integral equations with the quasi-periodic Green function, e.g., \cite{arens2006integral,rathsfeld2006fast,bruno2009efficient,pinto2021fast,OTANI20084630}. The direct use of the quasi-periodic Green functions is, however, not always valid due to the Wood anomaly. 

Several complementary boundary-integral strategies have been developed for computations at and around Wood anomalies. Shifted quasi-periodic Green functions replace the classical lattice sum by finite differences of shifted free-space kernels. Since the corresponding spectral representation may omit a finite set of Rayleigh modes, suitable plane-wave completion terms are added to obtain full-spectrum formulations \cite{bruno2014rapidly,bruno2017three}. A closely related hybrid spatial/spectral viewpoint explicitly separates the Rayleigh terms that become singular at a Wood anomaly from the regular part of the quasi-periodic Green function. The singular contributions then appear as finite-rank operators and are treated using Woodbury--Sherman--Morrison-type formulas together with a limiting procedure \cite{BrunoFernandezLado2017,BrunoFernandezLado2020}. These approaches provide well-conditioned full-spectrum solvers for a range of periodic scattering configurations. Another class of methods avoids the quasi-periodic Green function and instead uses the free-space fundamental solution together with auxiliary unknowns or proxy sources to impose quasi-periodicity \cite{barnett2011new,cho2015robust,wu2023robust}. More recently, a corrected windowed Green-function formulation was introduced in which a finite-rank modal correction enforces the Rayleigh radiation condition; the resulting equations can be discretized by standard Nyström or boundary element methods and produce superalgebraically convergent numerical solutions throughout the spectrum \cite{StrauszerCaussadeEtAl2023}. A related PML-BIE formulation incorporating a finite-mode correction has also recently been proposed
\cite{TanPerezArancibiaYin2026}.

A remaining issue is to establish rigorous error estimates that are uniform with respect to the frequency, in particular near Wood anomaly frequencies. For related two-dimensional periodic scattering problems, superalgebraic convergence has been proved for a spectral Galerkin method for multilayered periodic media away from Wood anomaly frequencies \cite{pinto2021fast} and for a Nystr\"om method applied to integral equations arising in scattering by diffraction gratings and rough surfaces under smoothness assumptions \cite{meier2000nystrom}. What is still missing, in particular, is a stability and error analysis that is locally uniform with respect to the frequency across Wood anomaly frequencies. Because of the Wood anomalies, we cannot immediately conclude that the numerical error can be bounded by a constant that is (locally) uniform in frequency. Indeed, we cannot expect such uniform error bounds for the standard boundary integral equation method with the quasi-periodic Green function due to the singular dependence of the kernel on the frequency at Wood anomaly frequencies, and the resulting loss of uniform boundedness around such frequencies.

In this paper, we discuss a finite rank-augmented boundary integral formulation for the diffraction problem $(P_k)$ and its Nystr\"om discretization. Our formulation is based on the observation that a non-radiating quasi-periodic Green function can still be constructed at Wood anomaly frequencies. In the present two-dimensional setting, this Green function is given explicitly by a Rayleigh expansion with a linear term in the grazing modes; see \cref{prop:green-modified}. The three-dimensional counterpart was developed in \cite{bruno2017three} and was called \textit{all-space (non-radiating) Green's function}. However, we cannot immediately obtain a boundary integral equation from this all-space Green function owing to two difficulties: (i) the all-space Green function does not satisfy the radiation condition and (ii) the usual radiating quasi-periodic Green function does not converge to this non-radiating Green function as the frequency approaches to a Wood anomaly. In this paper, we tackle these challenges through a careful analysis of quasi-periodic and all-space Green functions. These difficulties (i) and (ii) can be removed by subtracting the singular Rayleigh-mode contributions from the quasi-periodic Green function and adding a finite-dimensional correction to restore the radiation condition. Subsequently, the integral equation is discretized using the Nystr\"om method. In view of the logarithmic singularity of the kernel, we develop a special quadrature rule and its error analysis. Combining these results, we establish a rigorous error estimate for the proposed fully discrete numerical scheme and a frequency-uniform bound on the condition number of the discrete system matrix on compact frequency intervals that may contain Wood anomaly frequencies.

The rest of this paper is organized as follows. In Section 2, we introduce the radiation condition and quasi-periodic Green functions, formulate the boundary integral equation, give a complete specification of the fully discrete Nystr\"om scheme, and state the main results, \cref{thm:main-1,thm:main-2}. Section 3 is devoted to the analysis of the Green function, including its continuity with respect to the frequency and its Ewald representation. Section 4 describes the boundary integral formulation and proves its well-posedness. In Section 5, we derive the Nystr\"om discretization and prove frequency-uniform bounds for both the discretization error and the condition number of the resulting system matrix. Finally, we present numerical examples of the proposed scheme in Section 6.

\section{Preliminaries and main results}

In what follows, we always fix $L>0$ and $\beta\in\mathbb R$. Without loss of generality, we assume that the unit normal vector field $\nu:\Gamma_\mathrm{per}\to\mathbb R^2$, given by $\nu(\gamma(t)) := (-\gamma^\prime_2(t),\gamma^\prime_1(t))/|\gamma^\prime(t)|$, points into $\Omega_+$ (i.e., outward with respect to $\Omega_-$). Throughout the paper, we take the square root of a real number to have nonnegative imaginary part, i.e., $\sqrt{x}\ge 0$ for $x\ge 0$ and $\sqrt{x}=\mathrm i\sqrt{-x}$ for $x<0$. We begin with some preliminaries to describe our results.

\subsection{Radiation conditions and uniqueness}
Formally, we say that a solution of the Helmholtz equation is radiating if it can be written as a superposition of waves propagating in outward directions. To make this precise, let $\Omega\subset\mathbb R^2$ be an open and unbounded subset such that $\Omega+Le_1 = \Omega$. We first assume that $H_+ := \inf \{ h\in\mathbb R : \mathbb R \times (h,\infty)\subset\Omega \} \in\mathbb R$. Let $u\in C^2(\Omega)$ satisfy $-\varDelta u - k^2 u = 0$ and $u(x+Le_1) = u(x)\mathrm e^{\mathrm i\beta}$ in $\Omega$. By elliptic regularity, $u(x)\mathrm e^{-\mathrm i \beta x_1/L}$ is real-analytic in $\mathbb R\times(H_+,\infty)$. Hence its Fourier series in the periodic variable $x_1$, together with the differentiated series, converges locally uniformly. Thus termwise differentiation shows that there exists a unique pair of sequences $(A^+_s)$ and $(B^+_s)$ in $\mathbb C$ such that 
\begin{align*}
    u(x) &= \sum_{s\in\mathbb Z\setminus \sigma_\ast(k)} A^+_s\mathrm e^{\mathrm ik_sx_1 + \mathrm i\sqrt{k^2-k_s^2}x_2} + \sum_{s\in\mathbb Z\setminus \sigma_\ast(k)} B^+_s\mathrm e^{\mathrm ik_sx_1 - \mathrm i\sqrt{k^2-k_s^2}x_2} 
    \\
    & + \sum_{s\in\sigma_\ast(k)} A^+_s \mathrm e^{\mathrm ik_sx_1} + \sum_{s\in\sigma_\ast(k)} B^+_s x_2 \mathrm e^{\mathrm ik_sx_1}
    \quad\text{in }\mathbb R\times(H_+,\infty),
\end{align*}
where $k_s:=(\beta+2s\pi)/L$ and $\sigma_\ast(k) := \{s\in\mathbb Z : k^2-k_s^2=0\}$ is called the Wood-anomaly index set. Note that $\#\sigma_\ast(k) \leq 2$ for all $k\in\mathbb R$. We say that the solution $u$ satisfies the upward radiation condition if $B^+_s = 0$ for all $s\in\mathbb Z$.

The same argument can be applied in the opposite direction. For $\Omega$ with $H_- := \sup \{ h\in\mathbb R : \mathbb R \times (-\infty,h)\subset\Omega \} \in\mathbb R$, we can construct sequences $(A^-_s)$ and $(B^-_s)$ in $\mathbb C$ such that 
\begin{align*}
    u(x) &= \sum_{s\in\mathbb Z\setminus \sigma_\ast(k)} A^-_s\mathrm e^{\mathrm ik_sx_1 - \mathrm i\sqrt{k^2-k_s^2}x_2} + \sum_{s\in\mathbb Z\setminus \sigma_\ast(k)} B^-_s\mathrm e^{\mathrm ik_sx_1 + \mathrm i\sqrt{k^2-k_s^2}x_2} 
    \\
    & + \sum_{s\in\sigma_\ast(k)} A^-_s \mathrm e^{\mathrm ik_sx_1} + \sum_{s\in\sigma_\ast(k)} B^-_s x_2 \mathrm e^{\mathrm ik_sx_1}
    \quad\text{in }\mathbb R\times(-\infty,H_-).
\end{align*}
We say that the solution $u$ satisfies the downward radiation condition if $B^-_s = 0$ for all $s\in\mathbb Z$.

A Rellich-type uniqueness argument can be applied to the homogeneous version of $(P_k)$ under a suitable geometric condition.

\begin{theorem}[Kirsch \cite{kirsch1993diffraction}]\label{thm:kirsch}
    Let $k\in\mathbb R$. Assume that $\nu_2(x)>0$ for all $x\in\Gamma_\mathrm{per}$.
    Then the problem $(P_k)$ with $g=0$ admits only the trivial solution $u=0$.
\end{theorem}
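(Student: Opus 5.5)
The plan is the classical Rellich identity argument. Let $u$ solve $(P_k)$ with $g=0$. For $k=0$ the argument is slightly different, so assume $k\neq 0$ first; $k=0$ is treated at the end. Choose $h>H_+$ and let $\Omega_h$ be the part of $\Omega_+$ in one period strip $\{0<x_1<L\}$ below the line $x_2=h$. Set $\Gamma_h=(0,L)\times\{h\}$.

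\textbf{Step 1: energy identity.} Apply Green's identity to $u$ and $\bar u$ on $\Omega_h$. Quasi-periodicity cancels the lateral contributions, and $u=0$ on $\Gamma$. Hence $\operatorname{Im}\int_{\Gamma_h}\bar u\,\partial_2 u\,ds=0$. Inserting the Rayleigh expansion with $B^+_s=0$ gives
\[
\operatorname{Im}\int_{\Gamma_h}\bar u\,\partial_2 u\,ds
= L\sum_{k_s^2<k^2}\sqrt{k^2-k_s^2}\,|A_s^+|^2 .
\]
The grazing modes $s\in\sigma_\ast(k)$ are constant in $x_2$ when $B^+_s=0$, so they contribute nothing. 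The evanescent modes contribute real terms. Therefore $A_s^+=0$ for every propagating $s$.

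\textbf{Step 2: Rellich identity.} Apply the divergence theorem to the identity
\[
2\operatorname{Re}\bigl(\partial_2\bar u\,(\Delta u+k^2u)\bigr)=\operatorname{div}\bigl(2\operatorname{Re}(\partial_2\bar u\,\nabla u)-e_2(|\nabla u|^2-k^2|u|^2)\bigr)
\]
on $\Omega_h$. The lateral terms again cancel by quasi-periodicity. On $\Gamma$, the condition $u=0$ gives $\nabla u=\partial_\nu u\,\nu$, so the boundary integrand reduces to $-\nu_2|\partial_\nu u|^2$ with the appropriate sign. This yields
\[
\int_\Gamma \nu_2|\partial_\nu u|^2\,ds=\int_{\Gamma_h}\bigl(|\partial_2u|^2-|\partial_1u|^2+k^2|u|^2\bigr)\,ds .
\]
Evaluate the right-hand side with Parseval. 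After Step 1 only the grazing and evanescent modes survive. The grazing modes contribute $L(k^2-k_s^2)|A_s^+|^2=0$. Each evanescent mode with $k_s^2>k^2$ contributes $L\,(2(k^2-k_s^2))\,|A_s^+|^2e^{-2\sqrt{k_s^2-k^2}h}\le 0$.

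\textbf{Step 3: conclusion.} Since $\nu_2>0$, the left-hand side is nonnegative and the right-hand side is nonpositive, so both vanish. Then $\partial_\nu u=0$ on $\Gamma$, and $u=0$ there as well. Holmgren's theorem (unique continuation for Cauchy data) gives $u=0$ near $\Gamma$, and analyticity extends this to all of $\Omega_+$.

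\textbf{Main obstacle.} The delicate point is regularity. $\Gamma$ is only $C^{1,1}$ and $u$ is only in $C(\overline\Omega_+)$, so the Rellich identity, which uses $\nabla u$ on $\Gamma$, is not immediately justified. I would handle this by approximating $\Gamma$ from inside by the shifted curves $\Gamma+\varepsilon e_2$. The condition $\nu_2>0$ makes the domain locally a graph, so these curves stay in $\Omega_+$. I would then pass to the limit using boundary regularity: for a Dirichlet problem with zero data on a $C^{1,1}$ boundary, $u\in C^{1,\alpha}$ up to $\Gamma$ (or $u\in H^2$ locally). A cleaner alternative is to write the identity directly on the graph domain $\{x_2>f(x_1)\}$, since $\nu_2>0$ means $\Gamma_{\rm per}$ is the graph of a Lipschitz function $f$ (indeed $C^{1,1}$). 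The case $k=0$ is covered by the same Rellich computation, with no propagating modes.
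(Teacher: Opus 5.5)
Your argument is the standard Rellich-identity proof from Kirsch's paper; the paper cites that result without reproducing it. The structure is correct: the energy identity kills the propagating coefficients, the Rellich identity with multiplier $\partial_2\bar u$ gives $\int_\Gamma \nu_2|\partial_\nu u|^2\,ds$ in terms of data on $\Gamma_h$, and unique continuation from zero Cauchy data finishes the proof. The grazing modes correctly drop out of both identities.

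One computation in Step 2 is wrong, though the error is harmless. Take an evanescent mode $v=A_s\mathrm e^{\mathrm ik_sx_1-\mu_sx_2}$ with $\mu_s=\sqrt{k_s^2-k^2}$. Then $|\partial_2 v|^2=\mu_s^2|v|^2=(k_s^2-k^2)|v|^2$, so its contribution to $\int_{\Gamma_h}(|\partial_2u|^2-|\partial_1u|^2+k^2|u|^2)\,ds$ is $L(\mu_s^2-k_s^2+k^2)|A_s|^2\mathrm e^{-2\mu_sh}=0$. It is not $2L(k^2-k_s^2)|A_s|^2\mathrm e^{-2\mu_sh}$; that value is the propagating-mode contribution $2L\alpha_s^2|A_s|^2$ with $\alpha_s^2=k^2-k_s^2$, continued formally to $\alpha_s^2<0$.

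So after Step 1 the right-hand side vanishes identically. This still yields $\int_\Gamma\nu_2|\partial_\nu u|^2\,ds=0$, and the proof stands. Just do not conclude from this identity that the evanescent coefficients vanish.

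On regularity, the shifted curves $\Gamma+\varepsilon e_2$ do not remove the need for boundary regularity. You still need $\nabla u$ to extend continuously, or at least in $L^2$ traces, up to $\Gamma$. That is, you need $C^{1,\alpha}$ or local $H^2$ regularity near the $C^{1,1}$ boundary, starting only from $u\in C(\overline\Omega_+)$. One standard route is to subtract a Newtonian potential of $k^2u$ and apply Kellogg-type estimates to the remaining harmonic part. The same regularity also justifies extending $u$ by zero across $\Gamma$ in the unique-continuation step.
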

The condition $\nu_2>0$ means that $\Gamma_{\rm per}$ is an overhang-free periodic graph.

\subsection{Quasi-periodic Green function}
Quasi-periodic Green functions play a central role in boundary integral formulations of $(P_k)$. We first discuss their distributional definition, including the case of Wood anomaly frequencies.

Let $\mathcal D(\mathbb R^2) = C_c^\infty(\mathbb R^2;\mathbb C)$ be the space of compactly supported smooth complex-valued functions on $\mathbb R^2$ equipped with the usual locally convex topology. The space of distributions on $\mathbb R^2$, denoted by $\mathcal D^\prime(\mathbb R^2)$, is the continuous dual of $\mathcal D(\mathbb R^2)$. For example, the Dirac delta $\delta_x\in \mathcal D^\prime(\mathbb R^2)$ at $x\in\mathbb R^2$ is given by $\delta_x(\varphi) = \varphi(x)$ for all $\varphi\in \mathcal D(\mathbb R^2)$. It is clear that the lattice sum of Dirac deltas $F_\# := \sum_{n=-\infty}^\infty \delta_{nLe_1} \mathrm{e}^{\mathrm in\beta}$ converges in $\mathcal D^\prime(\mathbb R^2)$. Using the distribution $F_\#$, supported on the lattice $\Theta:=\{nLe_1 : n\in\mathbb Z\}$, we define quasi-periodic Green functions as follows.
\begin{definition}
    Let $k\in\mathbb R$. A distribution $G\in\mathcal D^\prime(\mathbb R^2)$ is called a quasi-periodic Green function if it satisfies $-\varDelta G - k^2G = F_\#$ in $\mathcal D^\prime(\mathbb R^2)$ and $G(x+Le_1) = G(x)\mathrm e^{\mathrm i\beta}$ for all $x\in\mathbb R^2\setminus\Theta$.
\end{definition}
In this definition, the pointwise quasi-periodic condition makes sense since any solution of the distributional Helmholtz equation can be identified as a smooth function by elliptic regularity away from the support of $F_\#$. More precisely, we obtain the following observation.

\begin{lemma}\label{lemma:green-regularity}
    Let $k\in\mathbb R$ and let $G\in\mathcal D^\prime(\mathbb R^2)$ be a solution to $-\varDelta G - k^2G = F_\# $ in $\mathcal D^\prime(\mathbb R^2)$. 
    Then $G\in L_\mathrm{loc}^p(\mathbb R^2)$ for all $p\in[1,\infty)$, and $G$ is real-analytic in $\mathbb R^2\setminus\Theta$.
    Moreover, for each $n\in\mathbb Z$ the difference $G-\Phi^{(n)}_k \mathrm e^{\mathrm in\beta} $ is real-analytic in a neighborhood of $nLe_1$, where 
    \begin{align*}
        \Phi^{(n)}_k(x) :=
        \begin{cases}
        \displaystyle
            \frac{1}{2\pi}\log \frac{1}{|x-nLe_1|} & \text{if }k=0
        \\
        \displaystyle
            \frac{\mathrm i}{4}H^{(1)}_0(|k||x-nLe_1|) & \text{otherwise}
        \end{cases}
        \quad\text{in }\mathbb R^2
    \end{align*}
    with the Hankel function $H^{(1)}_0$ of the first kind and zeroth order.
\end{lemma}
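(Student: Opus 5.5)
Since the statement is purely local, the plan is to reduce it to the singularity of a single fundamental solution plus elliptic regularity. First we check that $\Phi^{(n)}_k$ is a fundamental solution centred at $nLe_1$, i.e.\ $-\varDelta\Phi^{(n)}_k - k^2\Phi^{(n)}_k = \delta_{nLe_1}$ in $\mathcal D^\prime(\mathbb R^2)$. For $k=0$ this is the classical logarithmic potential. For $k\neq0$ it follows from the standard computation: $H^{(1)}_0(|k|r)$ solves the Bessel equation for $r>0$, and near $r=0$ it behaves like $\frac{2\mathrm i}{\pi}\log r$ plus a smooth remainder. Green's formula on the complement of a small disc, with the disc radius sent to $0$, then yields the delta mass.

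Next, fix $n\in\mathbb Z$ and choose an open disc $B$ around $nLe_1$ of radius $<L$, so that $B\cap\Theta=\{nLe_1\}$. On $B$ the restriction of $F_\#$ equals $\mathrm e^{\mathrm in\beta}\delta_{nLe_1}$. Hence $w:=G-\mathrm e^{\mathrm in\beta}\Phi^{(n)}_k$ satisfies $-\varDelta w-k^2w=0$ in $\mathcal D^\prime(B)$. The operator $-\varDelta-k^2$ is elliptic with constant (hence analytic) coefficients. Weyl's lemma, or more generally analytic hypoellipticity of elliptic operators with analytic coefficients, then shows that $w$ is a real-analytic function on $B$; this is the main claim. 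In the same way, on any open set disjoint from $\Theta$ we have $-\varDelta G-k^2G=0$, so $G$ is real-analytic on $\mathbb R^2\setminus\Theta$.

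Local $L^p$ integrability then follows by covering. Any compact set $K$ meets only finitely many lattice points. Near each lattice point, $G$ equals $\mathrm e^{\mathrm in\beta}\Phi^{(n)}_k$ plus a smooth function, and $\Phi^{(n)}_k=O(|\log|x-nLe_1||)$, which lies in $L^p_\mathrm{loc}$ for every finite $p$ since $\int_0^\varepsilon|\log r|^p r\,dr<\infty$. Away from the lattice points $G$ is continuous. A finite partition of unity therefore gives $G\in L^p(K)$, and thus $G\in L^p_\mathrm{loc}(\mathbb R^2)$.

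The only step needing real care is the identification of a distributional solution of the homogeneous Helmholtz equation with a real-analytic function. We will cite analytic hypoellipticity, for instance via the Cauchy--Kovalevskaya/Petrowsky theorem or the explicit analytic parametrix, rather than reprove it. A self-contained route uses a mean-value formula for Helmholtz solutions involving $J_0$: mollify $w$, apply the formula to the mollifications, and pass to the limit to obtain smoothness. Analyticity then follows from the Cauchy-type derivative estimates for solutions of constant-coefficient elliptic equations.
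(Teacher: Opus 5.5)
Your proof is correct and is essentially the paper's argument: both rely on $\Phi^{(n)}_k$ being a fundamental solution and on analytic hypoellipticity of $-\varDelta-k^2$. The only difference is that the paper subtracts the finite sum $\sum_{n=-N}^{N}\Phi^{(n)}_k\mathrm e^{\mathrm in\beta}$ on a whole bounded set $V$ at once, whereas you localize to a disc around each lattice point and patch with a partition of unity. One small imprecision, harmless for your Green's-formula computation of the delta mass: $H^{(1)}_0(|k|r)-\frac{2\mathrm i}{\pi}\log r$ is only $C^{1}$ at $r=0$, not smooth, because it contains an $r^2\log r$ term.
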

\begin{proof}
    The analyticity is an immediate consequence of elliptic regularity. 
    We see that $\Phi_n\in L_\mathrm{loc}^p(\mathbb R^2)$ for all $p\in[1,\infty)$ and $n\in\mathbb Z$. Moreover, each $\Phi^{(n)}_k$ solves $-(\varDelta + k^2)\Phi^{(n)}_k = \delta_{nLe_1}$ in the distributional sense. Therefore, for each $V\Subset\mathbb R^2$ we can choose sufficiently large $N\in\mathbb N$ such that $G - \sum_{n=-N}^N \Phi^{(n)}_k\mathrm e^{\mathrm in\beta}$ is real-analytic in $V$. Hence $G\in L_\mathrm{loc}^p(\mathbb R^2)$ for all $p\in[1,\infty)$. In particular, choosing $V$ small enough around $nLe_1$, we obtain that $G-\Phi^{(n)}_k \mathrm e^{\mathrm in\beta}$ is real-analytic in $V$.
\end{proof}

The following well-known result provides an explicit form of a quasi-periodic Green function, provided that the frequency $k$ is away from the set of Wood anomaly frequencies $\Lambda:=\{ k\in\mathbb R : k^2-k_s^2 = 0 \text{ for some }s\in\mathbb Z \}$. This is obtained by a standard Rayleigh expansion argument; see \cite{nedelec1991integral} for an analogous construction in the three-dimensional setting.

\begin{proposition}\label{thm:morishita}
    Let $k\in\mathbb R\setminus\Lambda$. 
    Then the series
    \begin{align}\label{eq:poisson}
        G_k(x) = \frac{\mathrm i}{2L}\sum_{s=-\infty}^\infty \frac{1}{\sqrt{k^2-k_s^2}} \mathrm e^{\mathrm ik_sx_1 + \mathrm i\sqrt{k^2-k_s^2}|x_2|}
    \end{align}
    converges in $L^p_\mathrm{loc}(\mathbb R^2)$ for all $1\leq p<\infty$, and the limit $G_k$ is a quasi-periodic Green function. 
\end{proposition}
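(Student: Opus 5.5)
We need three things from the series \eqref{eq:poisson}: convergence in $L^p_\mathrm{loc}(\mathbb R^2)$, the distributional equation $-\varDelta G_k-k^2G_k=F_\#$, and pointwise quasi-periodicity away from $\Theta$. We work with the symmetric partial sums $G_N:=\frac{\mathrm i}{2L}\sum_{|s|\le N}(\cdots)$. Write $\kappa_s:=\sqrt{k^2-k_s^2}$. Since $k\notin\Lambda$, every $\kappa_s\neq0$. Only finitely many $s$ have $\kappa_s$ real, and for $|s|$ large we have $\kappa_s=\mathrm i\sqrt{k_s^2-k^2}$ with $\sqrt{k_s^2-k^2}=|k_s|+O(1/|s|)$.

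\emph{Convergence.} Split off the finitely many propagating terms, which are bounded and smooth. For the evanescent tail, compare each term with the $k$-independent model term $\frac{1}{2L|k_s|}\mathrm e^{\mathrm ik_sx_1-|k_s||x_2|}$. The difference is $O(|s|^{-2})$ uniformly on compact sets, because
\begin{align*}
\bigl|\mathrm e^{-\sqrt{k_s^2-k^2}|x_2|}-\mathrm e^{-|k_s||x_2|}\bigr|\le C|x_2|\,|s|^{-1}\mathrm e^{-c|s||x_2|}\le C'|s|^{-2}.
\end{align*}
So the difference series converges uniformly on compacts to a continuous function. The model series splits into $s\ge s_0$ and $s\le -s_0$ parts. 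Up to the phase factor $\mathrm e^{\mathrm i\beta x_1/L}$, each part is a sum of the form $\sum_{m\ge1}\frac{1}{m}z^m$ with $z=\mathrm e^{2\pi(\pm\mathrm i x_1-|x_2|)/L}$, plus harmless $O(1/m^2)$ corrections coming from $1/|k_s|$ versus $L/(2\pi m)$. This sum equals $-\log(1-z)$. It converges for $x\notin\Theta$, and its modulus is bounded by $C+|\log|1-z||$, which behaves like $C+\log(1/|x-nLe_1|)$ near each lattice point. The partial sums are dominated, uniformly in $N$, by an $L^p_\mathrm{loc}$ function of this logarithmic type. I expect this step to be the main obstacle: bounding the partial sums of $\sum z^m/m$ uniformly by $C+|\log|1-z||$ for $|z|\le1$, $z\ne1$. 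That bound, together with dominated convergence, gives $L^p_\mathrm{loc}$ convergence for every $p<\infty$. The same estimates show that convergence is locally uniform, together with all derivatives, on $\mathbb R^2\setminus(\mathbb R\times\{0\})$, and locally uniform on $\mathbb R^2\setminus\Theta$.

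\emph{Equation.} Distributional convergence follows from $L^1_\mathrm{loc}$ convergence, so it suffices to compute $(-\varDelta-k^2)G_N$. For a single term $w_s=\frac{\mathrm i}{2\kappa_s}\mathrm e^{\mathrm ik_sx_1+\mathrm i\kappa_s|x_2|}$, the term solves the Helmholtz equation off $\{x_2=0\}$, is continuous across $x_2=0$, and its $x_2$-derivative jumps by $2\mathrm i\kappa_s\cdot\frac{\mathrm i}{2\kappa_s}\mathrm e^{\mathrm ik_sx_1}=-\mathrm e^{\mathrm ik_sx_1}$. Hence
\begin{align*}
(-\varDelta-k^2)\frac{1}{L}w_s=\frac1L\mathrm e^{\mathrm ik_sx_1}\otimes\delta_0(x_2).
\end{align*}
Summing over $|s|\le N$ gives $\frac1L\sum_{|s|\le N}\mathrm e^{\mathrm ik_sx_1}\otimes\delta(x_2)$. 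By the Poisson summation formula, $\frac1L\sum_s\mathrm e^{\mathrm i(\beta+2\pi s)x_1/L}=\sum_n\mathrm e^{\mathrm in\beta}\delta_{nL}(x_1)$ in $\mathcal D'(\mathbb R)$. Tensoring with $\delta(x_2)$ yields $F_\#$. Both sides converge in $\mathcal D'$, since differentiation is continuous there, so $-\varDelta G_k-k^2G_k=F_\#$.

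\emph{Quasi-periodicity.} Each term satisfies $w_s(x+Le_1)=\mathrm e^{\mathrm ik_sL}w_s(x)=\mathrm e^{\mathrm i\beta}w_s(x)$. The partial sums therefore satisfy the identity exactly, and since they converge pointwise on $\mathbb R^2\setminus\Theta$, the identity passes to the limit. This agrees with the smooth representative from \cref{lemma:green-regularity}, so $G_k$ is a quasi-periodic Green function.
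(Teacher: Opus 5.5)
Your proposal is correct, and its equation and quasi-periodicity steps are exactly the argument the paper gives in the proof of \cref{prop:green-modified}: you compute $(-\varDelta-k^2)$ of the symmetric partial sums as $\frac1L\sum_{|s|\le N}\mathrm e^{\mathrm ik_sx_1}$ times the line delta on $\{x_2=0\}$, then pass to the limit using Poisson summation and $L^1_{\mathrm{loc}}$ convergence. The paper does not prove the $L^p_{\mathrm{loc}}$ convergence and only cites the standard Rayleigh-expansion argument; your comparison with $\sum_m z^m/m$ supplies that proof, and the bound you flag is true. Abel summation gives $\bigl|\sum_{m>N}z^m/m\bigr|\le 2/((N+1)|1-z|)$, and trivially $\bigl|\sum_{m\le N}z^m/m\bigr|\le 1+\log N$. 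Split according to whether $(N+1)|1-z|\ge1$ and use $|\log(1-z)|\le|\log|1-z||+\pi/2$; this yields $\bigl|\sum_{m\le N}z^m/m\bigr|\le 4+|\log|1-z||$ for all $|z|\le1$, $z\ne1$, $N\ge1$.
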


It is easy to see that the quasi-periodic Green function \cref{eq:poisson} satisfies both the upward and downward radiation conditions in $\mathbb R^2\setminus\Theta$. The major challenge is that this quasi-periodic Green function cannot be extended continuously to $k\in\Lambda$ even in $\mathcal D^\prime(\mathbb R^2)$, i.e., $(G_k)$ converges neither in $L_\mathrm{loc}^p(\mathbb R^2)$ nor in $\mathcal D^\prime(\mathbb R^2)$ as $k$ approaches a Wood anomaly frequency through $\mathbb R\setminus\Lambda$.

An important observation is that a quasi-periodic Green function can still be constructed at $k\in\Lambda$. 

\begin{proposition}\label{prop:green-modified}
    Let $k\in\mathbb R$. Then the series
    \begin{align}\label{eq:green-all}
        G_k(x) &= \frac{\mathrm i}{2L}\sum_{s\in\mathbb Z\setminus\sigma_\ast(k)} \frac{\mathrm e^{\mathrm ik_sx_1+\mathrm i\sqrt{k^2-k_s^2}|x_2|}}{\sqrt{k^2-k_s^2}} - \frac{1}{2L}\sum_{s\in\sigma_\ast(k)} |x_2|\mathrm e^{\mathrm ik_sx_1}
    \end{align}
    converges in $L^p_\mathrm{loc}(\mathbb R^2)$ for all $1\leq p<\infty$, and the limit $G_k$ is a quasi-periodic Green function.
\end{proposition}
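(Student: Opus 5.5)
We prove convergence and identify the limit by Fourier analysis in the periodic variable $x_1$.

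\emph{Convergence.} Write each term as $\mathrm e^{\mathrm ik_sx_1}g_s(x_2)$, with
\begin{align*}
g_s(x_2)=\frac{\mathrm i}{2L}\frac{\mathrm e^{\mathrm i\sqrt{k^2-k_s^2}|x_2|}}{\sqrt{k^2-k_s^2}}\quad(s\notin\sigma_\ast(k)),\qquad g_s(x_2)=-\frac{|x_2|}{2L}\quad(s\in\sigma_\ast(k)).
\end{align*}
Only finitely many terms are not evanescent. For $|k_s|>|k|$ we have $|g_s(x_2)|\le \mathrm e^{-\kappa_s|x_2|}/(2L\kappa_s)$ with $\kappa_s=\sqrt{k_s^2-k^2}\sim 2\pi|s|/L$, so the tail converges absolutely and locally uniformly on $\{x_2\neq0\}$. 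The difficulty is near $x_2=0$, where the coefficients are only $O(1/|s|)$.

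To handle this, compare with the explicit Laplace-type sum $\sum_{s\neq0}\mathrm e^{\mathrm ik_sx_1-\kappa^0_s|x_2|}/(2L\kappa^0_s)$, where $\kappa^0_s:=2\pi|s|/L$. This sum equals, up to the factor $\mathrm e^{\mathrm i\beta x_1/L}$ and a constant, $-\frac{1}{2\pi}\log|1-\mathrm e^{2\pi\mathrm i(x_1+\mathrm i|x_2|)/L}|$ plus its conjugate counterpart. Its partial sums converge in $L^p_\mathrm{loc}$ for every $p<\infty$: the partial sums of $\sum_s z^s/s$ are bounded by $C(1+|\log|1-z||)$ uniformly for $|z|\le1$, and $\log$ is in every $L^p_\mathrm{loc}$.

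The difference of the $s$-th terms is bounded by $C|s|^{-3}(1+|x_2|)$ locally. This uses $|1/\kappa_s-1/\kappa_s^0|=O(|s|^{-3})$ and
\begin{align*}
|\mathrm e^{-\kappa_s|x_2|}-\mathrm e^{-\kappa^0_s|x_2|}|\le|x_2||\kappa_s-\kappa_s^0|\mathrm e^{-\min|x_2|}\cdot\text{const},\qquad |x_2||\kappa_s-\kappa_s^0|\mathrm e^{-c|s||x_2|}=O(|s|^{-2}),
\end{align*}
since $\kappa_s-\kappa_s^0=O(1/|s|)$. Hence the difference series converges uniformly on compacts. Together this gives convergence in $L^p_\mathrm{loc}(\mathbb R^2)$, and thus in $\mathcal D'(\mathbb R^2)$.

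\emph{Green function property.} Quasi-periodicity holds termwise, since $\mathrm e^{\mathrm ik_sL}=\mathrm e^{\mathrm i\beta}$, and passes to the limit pointwise off $\Theta$ by the local uniform convergence there. For the PDE, since distributional convergence allows termwise differentiation, it suffices to compute $(-\varDelta-k^2)$ of each term. In one variable, $-g_s''-(k^2-k_s^2)g_s=\frac1L\delta_0(x_2)$ in both cases:
\begin{align*}
\tfrac{\mathrm d^2}{\mathrm dx_2^2}\,\mathrm e^{\mathrm i\mu|x_2|}=-\mu^2\mathrm e^{\mathrm i\mu|x_2|}+2\mathrm i\mu\delta_0,\qquad -\tfrac{\mathrm d^2}{\mathrm dx_2^2}\Bigl(-\tfrac{|x_2|}{2L}\Bigr)=\tfrac1L\delta_0,
\end{align*}
where the second case has $k^2-k_s^2=0$. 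Therefore
\begin{align*}
(-\varDelta-k^2)G_k=\frac1L\sum_{s}\mathrm e^{\mathrm ik_sx_1}\otimes\delta_0(x_2)
\end{align*}
in $\mathcal D'$, with the series converging distributionally. By the Poisson summation formula,
\begin{align*}
\frac1L\sum_s\mathrm e^{\mathrm i(\beta+2\pi s)x_1/L}=\mathrm e^{\mathrm i\beta x_1/L}\sum_n\delta_{nL}(x_1)=\sum_n\mathrm e^{\mathrm in\beta}\delta_{nL},
\end{align*}
and tensoring with $\delta_0(x_2)$ gives $F_\#$.

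The main obstacle is the $L^p_\mathrm{loc}$ convergence near the lattice line $x_2=0$, where termwise bounds fail. The comparison with the closed-form logarithmic series is the step that requires care, in particular the uniform bound on partial sums of $\sum z^s/s$ up to $|z|=1$. An alternative is to note that distributional convergence is immediate, because pairing with test functions gives coefficients decaying like $|s|^{-N}$. Lemma~\ref{lemma:green-regularity} then yields $G_k\in L^p_\mathrm{loc}$ for the limit. The convergence of the partial sums themselves in $L^p_\mathrm{loc}$ still requires the comparison argument.
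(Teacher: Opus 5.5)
Your proof is correct. For the Green-function identity you follow the paper's route exactly: the partial sums converge in $L^p_{\rm loc}$, hence in $\mathcal D'$, so $(-\varDelta-k^2)$ may be applied termwise. This gives $\frac1L\sum_s \mathrm e^{\mathrm ik_sx_1}\otimes\delta_0(x_2)$, which is identified with $F_\#$ by Poisson summation.

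The difference lies in the convergence claim. The paper only remarks that it is a complete analog of the non-anomalous case, Proposition~\ref{thm:morishita}, because only the finitely many terms with $s\in\sigma_\ast(k)$ change; that case is itself quoted as standard. You instead give a self-contained argument. You compare the evanescent tail with the closed form $-\frac{1}{2\pi}\mathrm e^{\mathrm i\beta x_1/L}\log|1-z|$, where $z=\mathrm e^{2\pi\mathrm i(x_1+\mathrm i|x_2|)/L}$. You then combine the bound $|\sum_{s=1}^N z^s/s|\le C(1+|\log|1-z||)$ on $|z|\le 1$ with dominated convergence. This buys an actual proof of the $L^p_{\rm loc}$ convergence, which the paper never supplies. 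The same comparison, via the Dirichlet test on compact subsets of $\{|z|\le1\}\setminus\{1\}$, also yields the local uniform convergence off $\Theta$ for fixed $k$ that you invoke for quasi-periodicity. The cost is the classical partial-sum estimate, which you state but do not prove; it is true.

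One small slip does not affect the conclusion. With $\kappa_s^0=2\pi|s|/L$ you have $\kappa_s-\kappa_s^0\to\pm\beta/L$, not $O(1/|s|)$, unless $\beta=0$. Hence $|1/\kappa_s-1/\kappa_s^0|=O(|s|^{-2})$. Using $\sup_{y\ge 0}y\,\mathrm e^{-c|s|y}=O(|s|^{-1})$, the termwise difference is $O(|s|^{-2})$ uniformly in $x_2$, rather than $O(|s|^{-3})$. This is still summable, so the difference series converges uniformly and your argument goes through unchanged.
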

\begin{proof}
    The convergence of the series is a complete analog of \cref{thm:morishita} since $\sigma_\ast(k)$ is finite. Thus it suffices to show that the limit $G_k$ satisfies $(-\varDelta-k^2)G_k = F_\#$.

    Consider the partial sum
    \begin{align*}
        G^N_k(x) = \frac{\mathrm i}{2L}\sum_{s\in\{-N,\ldots,N\}\setminus\sigma_\ast(k)} \frac{\mathrm e^{\mathrm ik_sx_1+\mathrm i\sqrt{k^2-k_s^2}|x_2|}}{\sqrt{k^2-k_s^2}} - \frac{1}{2L}\sum_{s\in\sigma_\ast(k)} |x_2|\mathrm e^{\mathrm ik_sx_1}
    \end{align*}
    for sufficiently large $N\in\mathbb N$ with $\sigma_\ast(k)\subset \{-N,\ldots,N\}$. A straightforward calculation shows that
    \begin{align*}
        (-\varDelta-k^2)G^N_k(x)  = \frac{1}{L}\tilde\delta_2 \sum_{s=-N}^N \mathrm e^{\mathrm ik_sx_1},
    \end{align*}
    where $\tilde \delta_2\in\mathcal D^\prime(\mathbb R^2)$ is defined by
    \begin{align*}
        \langle\tilde\delta_2,\phi\rangle = \int_{-\infty}^\infty \phi(x_1,0)\mathrm dx_1 \quad\text{for all }\phi\in C_c^\infty(\mathbb R^2).
    \end{align*}
    Using
    \begin{align*}
        \frac{1}{L}\tilde\delta_2 \sum_{s=-N}^N \mathrm e^{\mathrm ik_sx_1} \ \longrightarrow \ F_\# \quad\text{in }\mathcal D^\prime(\mathbb R^2)
        \quad\text{and}\quad
        G^N_k \ \longrightarrow \ G_k \quad\text{in } L^p_\mathrm{loc}(\mathbb R^2),
    \end{align*}
    we obtain $(-\varDelta-k^2) G_k = F_\#$. The quasi-periodicity is immediate.
\end{proof}

For $k\notin\Lambda$, the Green function in \cref{prop:green-modified} coincides with the radiating quasi-periodic Green function in \cref{thm:morishita}. For $k\in\Lambda$, it is a non-radiating quasi-periodic Green function.

\subsection{Boundary integral formulation}
Our first result establishes a boundary integral formulation of the scattering problem $(P_k)$ for arbitrary $k\in\mathbb R$, once a finite set $\sigma\subset\mathbb Z$ containing the Wood-anomaly indices has been chosen.

Let $\sigma$ be a finite subset of $\mathbb Z$ and define $\Lambda_\sigma:= \{k\in\Lambda:\sigma_\ast(k)\subset\sigma\}$.  Using the Green function $G_k$, defined in \cref{prop:green-modified}, we define
\begin{align*}
    \hat G_k(x) :=
            G_k(x) - \frac{\mathrm i}{2L}\sum_{s\in\sigma\setminus\sigma_\ast(k)} \frac{\mathrm e^{\mathrm ik_sx_1}}{\sqrt{k^2-k_s^2}}  
\end{align*}
for $x\in\mathbb R^2\setminus\Theta$ and $k\in(\mathbb R\setminus\Lambda)\cup \Lambda_\sigma$. 
Although $\hat G_k$ is not itself a Green function when $\sigma\setminus\sigma_\ast(k)\neq\emptyset$, the finite-dimensional correction introduced below cancels the non-Helmholtz components generated by the subtraction.

For $\varphi\in C_\beta(\Gamma_\mathrm{per})$, $k\in (\mathbb R\setminus\Lambda)\cup \Lambda_\sigma$, and $x\in\Gamma_\mathrm{per}$, we define the integrals
\begin{align*}
    (S_k\varphi)(x) := 2\int_\Gamma \hat G_k(x-y)\varphi(y)\mathrm ds(y),\quad
    (D_k\varphi)(x) := 2\int_\Gamma \frac{\partial \hat G_k}{\partial \nu(y)}(x-y)\varphi(y)\mathrm ds(y).
\end{align*}
In view of \cref{lemma:green-regularity,prop:green-modified}, the kernels of these boundary integrals are at most weakly singular. Indeed, the logarithmic singularity of $G_k$ gives the usual weak singularity of $S_k$, while the $C^{1,1}$ regularity of $\Gamma_\mathrm{per}$ makes the normal derivative kernel of $D_k$ weakly singular. Hence the standard compactness results for layer-potential operators apply. Thus the integrals define compact linear operators $S_k,D_k:C_\beta(\Gamma_\mathrm{per})\to C_\beta(\Gamma_\mathrm{per})$ for each $k\in (\mathbb R\setminus\Lambda)\cup \Lambda_\sigma$ (see \cref{prop:cts-lipschitz}).

Define $\mathcal C_\sigma := \{ (c_s)_{s\in\sigma}: c_s\in\mathbb C \}$ with $\|c\|_{\mathcal C_\sigma}:=\max_{s\in\sigma} |c_s|$ and compact operators $T:C_\beta(\Gamma_\mathrm{per})\to \mathcal C_\sigma$ and $R:\mathcal C_\sigma\to C_\beta(\Gamma_\mathrm{per})$ by
\begin{align*}
    (T\varphi)_s = \frac{\mathrm i}{L}\int_\Gamma \left( (\partial_\nu - \mathrm i\eta) \mathrm e^{-\mathrm ik_sy_1} \right) 
    \varphi(y) \mathrm ds(y),\quad 
    (R c)(x) = \sum_{s\in \sigma} c_s\mathrm e^{\mathrm ik_sx_1}.
\end{align*}
Combining these operators, we consider the linear operator $A_k:C_\beta(\Gamma_\mathrm{per})\times \mathcal C_\sigma\to C_\beta(\Gamma_\mathrm{per})\times \mathcal C_\sigma$ defined by
\begin{align*}
    A_k := \begin{bmatrix}
            D_k - \mathrm i\eta S_k & R
        \\
            T & -\mathrm{diag} (1+\sqrt{k^2-k_s^2} )_{s\in\sigma}
        \end{bmatrix}
        ,
\end{align*}
where $\eta>0$ is a constant.

\begin{theorem}\label{thm:main-1}
    Let $k\in\mathbb R$, let $\eta>0$, and assume that $(P_k)$ admits only the trivial solution when $g=0$.
    Then 
    \begin{enumerate}\renewcommand{\theenumi}{\roman{enumi}}
        \item the problem $(P_k)$ is uniquely solvable for all $g\in C_\beta(\Gamma_\mathrm{per})$, 
        \item for each finite subset $\sigma\subset\mathbb Z$ with $k\in (\mathbb R\setminus\Lambda)\cup \Lambda_\sigma$, the boundary integral equation 
        \begin{align*}
            (Q_k) \ \text{Find }(\varphi,c)\in C_\beta(\Gamma_\mathrm{per})\times \mathcal C_\sigma \text{ such that }
            (I+A_k)
            \begin{pmatrix}
                \varphi \\ c
            \end{pmatrix}
            =
            \begin{pmatrix}
                2g \\ 0
            \end{pmatrix}
        \end{align*}
        is uniquely solvable for all $g\in C_\beta(\Gamma_\mathrm{per})$, and its solution $(\varphi,c)$ gives the unique solution $u_k$ of $(P_k)$ by
        \begin{align*}
            u_k(x) = \int_\Gamma \left(\frac{\partial\hat G_k}{\partial\nu(y)}(x-y) -\mathrm i\eta \hat G_k(x-y) \right) \varphi(y) \mathrm ds(y) + \frac{1}{2}\sum_{s\in\sigma} c_s\mathrm e^{\mathrm ik_sx_1}
        \end{align*}
        for $x\in\Omega_+$.
    \end{enumerate}
    In addition, let $J\subset\mathbb R$ be an interval and assume that the homogeneous problem $(P_k)$ has only the trivial solution for all $k\in J$. Then, for every continuous family $\{g_k\}_{k\in J}\subset C_\beta(\Gamma_\mathrm{per})$, the corresponding solutions $u_k$ of $(P_k)$ depend continuously on $k$ in $C_{\rm loc}(\overline\Omega_+)$, i.e., for each $k\in J$ and compact subset $V\subset\overline \Omega_+$, it holds that
    \begin{align*}
        \sup_{x\in V} |u_k(x) - u_{\kappa}(x)| \longrightarrow 0\quad
    \text{as }\kappa\to k.
    \end{align*}
\end{theorem}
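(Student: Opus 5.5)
The plan is a Fredholm argument for $(Q_k)$ that yields (i) as a by-product, followed by a continuity argument in $k$ based on the continuity of $\hat G_k$.

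\textbf{Step 1: the ansatz solves $(P_k)$.} For $(\varphi,c)$ solving $(Q_k)$, define $u_k$ by the formula in (ii). The aim is to show that $u_k$ is a radiating Helmholtz solution. Write $\hat G_k = G_k - \frac{\mathrm i}{2L}\sum_{s\in\sigma\setminus\sigma_\ast(k)} \mathrm e^{\mathrm ik_sx_1}/\sqrt{k^2-k_s^2}$. The term $\mathrm e^{\mathrm ik_sx_1}$ satisfies $(-\varDelta-k^2)\mathrm e^{\mathrm ik_sx_1}=(k_s^2-k^2)\mathrm e^{\mathrm ik_sx_1}$, so the subtracted part of the layer potential is not a Helmholtz solution. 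The $\mathcal C_\sigma$ component of $(Q_k)$ should fix this. Concretely, I would expand $u_k$ in Rayleigh modes above $\Gamma$. For $s\in\sigma$, the $G_k$-part contributes either $\frac{\mathrm i}{2L\sqrt{k^2-k_s^2}}\mathrm e^{\mathrm ik_sx_1+\mathrm i\sqrt{k^2-k_s^2}x_2}\int_\Gamma(\partial_\nu-\mathrm i\eta)\mathrm e^{-\mathrm ik_sy_1-\mathrm i\sqrt{\cdot}\,y_2}\varphi\,\mathrm ds$ or the linear term $-\frac{1}{2L}(x_2-y_2)\mathrm e^{\mathrm ik_s(x_1-y_1)}$. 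The subtracted constants, together with $\frac12 c_s\mathrm e^{\mathrm ik_sx_1}$, then combine into a mode whose non-Helmholtz or non-radiating part vanishes exactly when $c_s=(T\varphi)_s/(1+\sqrt{k^2-k_s^2})$ type relations hold. I expect this bookkeeping to reduce precisely to the second row $T\varphi - \mathrm{diag}(1+\sqrt{k^2-k_s^2})c + c=0$, i.e. $(T\varphi)_s=\sqrt{k^2-k_s^2}\,c_s$. At a Wood index this gives $(T\varphi)_s=0$, which kills the $x_2\mathrm e^{\mathrm ik_sx_1}$ coefficient and yields the upward radiation condition. The jump relations for the (weakly singular, by \cref{lemma:green-regularity}) double and single layer, together with the factor $2$ in $S_k,D_k$, then give $u_k|_{\Gamma}=\frac12(\varphi+(D_k-\mathrm i\eta S_k)\varphi+Rc)=g$ from the first row. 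Verifying this exact algebra of the correction is the main obstacle; I would carry it out mode by mode, using the explicit series \cref{eq:green-all} for $x_2>\max_\Gamma y_2$.

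\textbf{Step 2: injectivity and Fredholm alternative.} Since $A_k$ is compact (on the infinite-dimensional block $D_k,S_k,R,T$ are compact, and the $\mathcal C_\sigma$ block is finite-dimensional), $I+A_k$ is Fredholm of index zero. So it suffices to show injectivity. Suppose $(I+A_k)(\varphi,c)=0$. By Step 1, $u_k$ solves the homogeneous $(P_k)$, so $u_k=0$ in $\Omega_+$ by hypothesis. Now define the same potential $v$ in $\Omega_-$, with the extra modal terms. Using the jump relations $u_+-v_-=\varphi$ and $\partial_\nu u_+-\partial_\nu v_-=\mathrm i\eta\varphi$ gives $v_-=-\varphi$ and $\partial_\nu v_-=-\mathrm i\eta\varphi$ on $\Gamma$. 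Then $v$ satisfies an impedance condition $\partial_\nu v=\mathrm i\eta v$. Green's identity over a truncated period cell of $\Omega_-$ gives $\eta\int_\Gamma|\varphi|^2$ equal to an imaginary part of flux terms at $x_2=-h$. To control these, I need $v$ to be downward radiating, or at least to have nonnegative flux. I would check from the expansion that the correction terms make the flux have the right sign; the choice of $1+\sqrt{\cdot}$ rather than $\sqrt{\cdot}$ in the diagonal should be what handles the propagating and grazing indices in $\sigma$. Concluding $\varphi=0$ and then $c=0$ gives injectivity. Existence for $(P_k)$, i.e. (i), follows. Uniqueness for $(P_k)$ is the hypothesis.

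\textbf{Step 3: continuity in $k$.} Fix $k\in J$ and choose $\sigma\supset\sigma_\ast(k)$ with $\sigma$ containing the Wood indices of nearby frequencies; $\sigma_\ast(k)$ itself suffices, since $\Lambda$ is discrete. Then all $\kappa$ near $k$ lie in $(\mathbb R\setminus\Lambda)\cup\Lambda_\sigma$. I would show that $\kappa\mapsto\hat G_\kappa$ is continuous in $L^1$ on compacts, together with its normal derivative up to the weak singularity. The key point is that near a Wood index the terms $\frac{\mathrm i}{2L}(\mathrm e^{\mathrm i\sqrt{\cdot}|x_2|}-1)/\sqrt{\cdot}\to-\frac{1}{2L}|x_2|$, which is exactly the linear term in \cref{eq:green-all}. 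Hence $A_\kappa\to A_k$ in operator norm. Stability of inverses under norm perturbation then gives $(I+A_\kappa)^{-1}\to(I+A_k)^{-1}$. With $g_\kappa\to g_k$ this yields $(\varphi_\kappa,c_\kappa)\to(\varphi_k,c_k)$, and substituting into the representation formula gives locally uniform convergence of $u_\kappa$. Near $\Gamma$ I would use continuity of the potentials up to the boundary, with boundary values $g_\kappa$, together with the maximum-modulus-type equicontinuity of the layer potentials.
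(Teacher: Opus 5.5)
Your proposal is correct and follows the paper's proof. The shared steps are: Riesz theory for $I+A_k$; injectivity via the second row $(T\varphi)_s=\sqrt{k^2-k_s^2}\,c_s$, which makes the potential a radiating Helmholtz solution; uniqueness of $(P_k)$ in $\Omega_+$; the jump relations giving $\partial_\nu v=\mathrm i\eta v$ in $\Omega_-$, where your flux identity is just the proof of the Robin uniqueness result \cref{lemma:robin} that the paper cites; and finally operator-norm continuity of $k\mapsto A_k$, which the paper takes from \cref{prop:Gk-cts,prop:cts-lipschitz}, with uniformity near the lattice points coming from an elliptic-regularity argument rather than from your Wood-mode limit alone. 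The one thing to fix is the rationale in Step 2: downward radiation of $v$ has nothing to do with the ``$1+$'' in the diagonal block (it only cancels the identity). It follows from the same relation $(T\varphi)_s=\sqrt{k^2-k_s^2}\,c_s$ by redoing your Step 1 expansion for $x_2<\min_\Gamma y_2$, since $G_k$ depends on $x_2$ only through $|x_2|$. After that the flux sign is automatic, $\eta\int_\Gamma|\varphi|^2\,\mathrm ds=-L\sum_{k_s^2<k^2}\sqrt{k^2-k_s^2}\,|A^-_s|^2\le 0$, so $\varphi=0$.
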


\begin{remark}
    By \cref{thm:kirsch}, the overhang-free condition $\nu_2>0$ is sufficient for the hypothesis of \cref{thm:main-1}.
\end{remark}

\subsection{Nystr\"om discretization and error estimate}

\cref{thm:main-1} motivates us to construct numerical approximations of a unique solution to the boundary integral equation $(Q_k)$. We shall establish an error estimate that is locally uniform in $k$ and bound on the condition number of the discrete system. The following definitions provide a self-contained description of the fully discrete method; in particular, the scheme can be implemented directly from the formulas below. Its derivation and convergence analysis are given in Section 5.

Let $\chi\in C_c^\infty(\mathbb R)$ be a window function such that $0\le\chi\le1$, $\chi(s) = 1$ for all $s\in [-1,1]$ and $\chi(s) = 0$ for all $|s|\ge 2$.
Let $\zeta>0$ and define
\begin{align*}
    H_{k,m}(t,\tau) 
    &:= -2|\gamma'(\tau)| \mathrm e^{-\mathrm i\beta(t-\tau)/(2\pi)} (\nu(\gamma(\tau))\cdot \nabla + \mathrm i\eta) \left.W_{k,m}(x)\right|_{x=\gamma(t)-\gamma(\tau)}
    \\
    &\hspace{-10pt}- \frac{|\gamma^\prime(\tau)|}{2\pi}\sum_{\ell\in\mathbb Z}
    \chi \left( \frac{t-\tau-2\pi\ell}{2\pi m} \right)
    \mathrm e^{-\mathrm i\beta(t-\tau-2\pi\ell)/(2\pi)}
    \biggl\{
        - \mathrm i\eta E_1\!\left(\zeta^2(t-\tau-2\pi\ell)^2\right)
    \\
        &+
        (\nu(\gamma(\tau))\cdot \nabla_x + \mathrm i\eta)\biggl[
            E_1\!\left(\zeta^2|x|^2\right)J_0(k|x|)
            \\
            &+
            \mathrm e^{-\zeta^2|x|^2} \sum_{j=1}^{m}\frac{1}{(j!)^2}\left(\frac{k}{2\zeta}\right)^{2j}\sum_{q=0}^{j-1}(j-1-q)!(-\zeta^2|x|^2)^q
        \biggr]_{x=\gamma(t)-\gamma(\tau) - \ell Le_1}
    \biggr\},
    \end{align*}
    where
    \begin{align*}
    W_{k,m}(x) &:= 
    \frac{\mathrm i}{4L}
    \sum_{\substack{|s|\le m\\ s\notin\sigma}}
    \frac{\mathrm e^{\mathrm ik_sx_1}}{\sqrt{k^2-k_s^2}}
    \Biggl[
        \mathrm e^{-\mathrm i\sqrt{k^2-k_s^2} x_2}
    \operatorname{erfc}\!\left(
        \zeta x_2-\frac{\mathrm i\sqrt{k^2-k_s^2}}{2\zeta}
    \right)
    \\
    &+
    \mathrm e^{\mathrm i\sqrt{k^2-k_s^2} x_2}
    \operatorname{erfc}\!\left(
        -\zeta x_2-\frac{\mathrm i\sqrt{k^2-k_s^2}}{2\zeta}
    \right)
    \Biggr]
    + \sum_{s\in\sigma}
    \mathrm e^{\mathrm ik_sx_1}
    \Psi(\sqrt{k^2-k_s^2},x_2)
    \end{align*}
    with continuous function
    \begin{align*}
    \Psi(\alpha,x_2)
    &:=
    \begin{cases}
    \displaystyle
    \frac{\mathrm i}{4L}
    \frac{\mathrm e^{-\mathrm i\alpha x_2}
    \operatorname{erfc}\!\left(
        \zeta x_2-\frac{\mathrm i\alpha}{2\zeta}
    \right)
    +
    \mathrm e^{\mathrm i\alpha x_2}
    \operatorname{erfc}\!\left(
        -\zeta x_2-\frac{\mathrm i\alpha}{2\zeta}
    \right)-2}{\alpha}
    & \alpha\ne0,
    \\[2ex]
    \displaystyle
    -\frac{1}{2L}
    \left(
        x_2\operatorname{erf}(\zeta x_2)
        +
        \frac{\mathrm e^{-\zeta^2x_2^2}}{\sqrt\pi\,\zeta}
    \right)
    & \alpha=0
    \end{cases}
\end{align*}
for each $t,\tau\in\mathbb R$, $k\in (\mathbb R\setminus\Lambda)\cup \Lambda_\sigma$, and $m\in\mathbb N$. For each $j\in\mathbb N$, the function $E_j$ is defined by
\begin{align*}
    E_j(z) := \int_1^\infty \mathrm t^{-j} e^{-zt} \mathrm dt \quad\text{for all }z\in\mathbb C \text{ with }\mathrm{Re}[z]>0.
\end{align*}
Note that the sum over $\ell$ is finite for each $t,\tau\in\mathbb R$ due to the compact support of $\chi$. Moreover, if $\gamma\in C^2(\mathbb R)$, then the singularities in the above expression cancel and $H_{k,m}$ extends continuously to $\mathbb T\times\mathbb T$, where $ \mathbb T:= \mathbb R/(2\pi\mathbb Z)$.

Using the function $H_{k,m}$, we consider the following $(2n+\#\sigma)$-by-$(2n+\#\sigma)$ linear system: $(Q_k^{(n,m)})$ Find $(\widetilde{\bm{\varphi}}^{(n,m)},c^{(n,m)})\in \mathbb C^{2n}\times\mathcal C_\sigma$ such that 
\begin{align*}
    \underbrace{
    \begin{bmatrix}
        B^{(n,m)}_{11} & B^{(n)}_{12}
        \\
        B^{(n)}_{21} &  B_{22}
    \end{bmatrix}
    }_{=:B^{(n,m)}_k}
    \begin{pmatrix}
        \widetilde{\bm{\varphi}}^{(n,m)} \\ c^{(n,m)}
    \end{pmatrix}
    =
    \begin{pmatrix}
        2\widetilde{\bm{g}}^{(n)} \\ 0
    \end{pmatrix}
    ,
\end{align*}
where the matrices $B^{(n,m)}_{11}\in \mathbb C^{2n\times 2n}$, $B^{(n)}_{12}\in \mathbb C^{2n\times \#\sigma}$, $B^{(n)}_{21}\in \mathbb C^{\#\sigma\times 2n}$, $B_{22}\in \mathbb C^{\#\sigma\times \#\sigma}$ and the vector $\widetilde{\bm{g}}^{(n)}\in \mathbb C^{2n}$ are defined as follows:
\begin{align*}
    (B^{(n,m)}_{11})_{ij} &:= \delta_{ij} - \frac{\mathrm i\eta}{2\pi} R^{(n)}_j(t^{(n)}_i)|\gamma^\prime(t^{(n)}_j)| + \frac{\pi}{n}H_{k,m}(t^{(n)}_i,t^{(n)}_j),
    \\
    (B^{(n)}_{12})_{is} &:= \mathrm e^{-\mathrm i\beta t^{(n)}_i/(2\pi) + \mathrm ik_s\gamma_1(t^{(n)}_i)},
    \\
    (B^{(n)}_{21})_{sj} &:= \frac{\mathrm i\pi}{nL}
    \mathrm e^{\mathrm i\beta t^{(n)}_j/(2\pi)}
        |\gamma^\prime(t^{(n)}_j)|
        \left(
            \nu(\gamma(t^{(n)}_j))\cdot
            \nabla_y
            -
            \mathrm i\eta
        \right)
        \mathrm e^{-\mathrm ik_sy_1}\big|_{y=\gamma(t^{(n)}_j)},
    \\
    (B_{22})_{ls} &:= -\sqrt{k^2-k_s^2} \delta_{sl},
    \\
    \widetilde g^{(n)}_j &:= \mathrm e^{-\mathrm i\beta t^{(n)}_j/(2\pi)} g(\gamma(t^{(n)}_j))
\end{align*}
for $i,j=0,\ldots,2n-1$ and $s,l\in\sigma$ with
\begin{align}
\label{eq:def-t}
    t^{(n)}_j := j\pi/n \quad\text{for }j=0,1,\ldots,2n-1.
\end{align}
For each $j=0,\ldots,2n-1$, the function $R^{(n)}_j:\mathbb T\to\mathbb C$ is given by
\begin{align}
\label{eq:def-R}
    R_j^{(n)}(t)
    &=
    \frac{\pi}{n}\sum_{p=-n}^{n-1}
    \widehat{\mathcal E}_p\,
    \mathrm e^{\mathrm i p(t-t^{(n)}_j)},
    \\ \notag
    \widehat{\mathcal E}_p &= 
    \begin{cases}
            \dfrac{1}{|p+\beta/(2\pi)|}
            \operatorname{erf}\!\left(\dfrac{|p+\beta/(2\pi)|}{2\zeta}\right)
            & \text{if }p+\beta/(2\pi)\neq 0,\\[2ex]
            \dfrac{1}{\sqrt{\pi}\,\zeta}
            & \text{otherwise}.
        \end{cases}
\end{align}

To state our second theorem, we impose stronger regularity assumptions on the data $g$ and the boundary $\Gamma_\mathrm{per}$ to obtain a convergence rate for the proposed scheme. Define $C^{0,1}_\beta(\Gamma_\mathrm{per}) := \{ \varphi\in C_\beta(\Gamma_\mathrm{per}) : \ \operatorname{Lip}(\varphi\circ\gamma) < +\infty \}$. We equip this space with the norm $\| \varphi \|_{C^{0,1}_\beta(\Gamma_\mathrm{per})} := \| \varphi \|_{C_\beta(\Gamma_\mathrm{per})} + \operatorname{Lip}(\varphi\circ\gamma)$, where $\operatorname{Lip}$ denotes the Lipschitz constant.

\begin{theorem}\label{thm:main-2}
    Let $K$ be a compact subset of $\mathbb R$ and assume that the homogeneous problem $(P_k)$ admits only the trivial solution for all $k\in K$. Let $\eta>0$ and let $\sigma\subset\mathbb Z$ be a finite subset such that $K\subset (\mathbb R\setminus\Lambda)\cup \Lambda_\sigma$. 
    Let $\{g_k\}_{k\in K}$ be such that $k\mapsto g_k$ is continuous from $K$ into $C_\beta^{0,1}(\Gamma_\mathrm{per})$. Suppose that $\gamma\in C^{2,\alpha}(\mathbb R)$ for some $\alpha\in (0,1]$.
    Then the following assertions hold:
    \begin{enumerate}\renewcommand{\theenumi}{\roman{enumi}}
        \item there exist $N\in\mathbb N$ and $M\in\mathbb N$ such that $(Q_{k}^{(n,m)})$ with $g=g_k$ is uniquely solvable for all $k\in K$, $n\ge N$, and $m\ge M$,
        \item there exist $C>0$ and $\xi>0$ such that
        \begin{align*}
            \sup_{k\in K} \left( \| \varphi_k - \varphi^{(n,m)}_k\|_{C_\beta(\Gamma_\mathrm{per})} + \| c_k - c^{(n,m)}_k\|\right) \leq C \left(n^{-\alpha}+\mathrm e^{-\xi m}\right)
        \end{align*}
        for all $n\ge N$ and $m\ge M$,
        \item the $\ell^\infty$ condition number of the matrix $B^{(n,m)}_k$ is uniformly bounded with respect to $k\in K$, $n\ge N$, and $m\ge M$,
    \end{enumerate}
    where $(\varphi_k,c_k)\in C_\beta(\Gamma_\mathrm{per})\times \mathcal C_\sigma$ is the unique solution to $(Q_k)$ with $g=g_k$, and $(\varphi^{(n,m)}_k,c^{(n,m)}_k)\in C_\beta(\Gamma_\mathrm{per})\times \mathcal C_\sigma$ are obtained by the unique solution $(\widetilde{\bm{\varphi}}^{(n,m)}, c^{(n,m)}_k)\in \mathbb C^{2n}\times\mathcal C_\sigma$ to $(Q^{(n,m)}_k)$ with $g=g_k$ as follows:
    \begin{align*}
        \varphi^{(n,m)}_k(\gamma(t)) &= \mathrm e^{\mathrm i\beta t/(2\pi)}\sum_{j=0}^{2n-1}\left( R^{(n)}_j(t)\frac{\mathrm i\eta}{2\pi}|\gamma^\prime(t^{(n)}_j)| - \frac{\pi}{n}H_{k,m}(t,t^{(n)}_j)
        \right)
        (\widetilde{\bm{\varphi}}^{(n,m)}_k)_j 
        \\
        &\qquad - \sum_{s\in\sigma}  (c^{(n,m)}_k)_s\mathrm e^{\mathrm ik_s(\gamma(t))_1} + 2g_k(\gamma(t)).
    \end{align*}
\end{theorem}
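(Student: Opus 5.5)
We will treat \cref{thm:main-2} as a collectively compact (Anselone) perturbation argument for the family of operators $I+A_k$, carried out uniformly over the compact set $K$. We first pull back to the parameter domain. Setting $\tilde\varphi(t):=\mathrm e^{-\mathrm i\beta t/(2\pi)}\varphi(\gamma(t))$ identifies $C_\beta(\Gamma_\mathrm{per})$ isometrically with $C(\mathbb T)$, and $(Q_k)$ becomes an equation $(I+\mathcal A_k)(\tilde\varphi,c)=(2\tilde g,0)$ on $C(\mathbb T)\times\mathcal C_\sigma$. Its integral part has a kernel that we split, via the Ewald representation of $\hat G_k$ (Section 3), into three pieces:
\begin{itemize}
\item a smooth spectral part, given exactly by the series $W_{k,\infty}$;
\item a spatial part that is smooth up to the windowed local term;
\item the explicit logarithmic piece $-\frac{\mathrm i\eta}{2\pi}E_1(\zeta^2(t-\tau)^2)$, whose Fourier coefficients are the $\widehat{\mathcal E}_p$ appearing in \eqref{eq:def-R}.
\end{itemize}
The discrete operator $\mathcal A^{(n,m)}_k$ is the Nystr\"om operator in which the log piece is integrated by the product rule with weights $R^{(n)}_j$ (trigonometric interpolation), the rest is integrated by the trapezoidal rule, and $W$ and the Bessel-type series are truncated at $m$. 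The solution formula in the theorem is exactly the Nystr\"om interpolant. So (i)--(ii) reduce to the standard Nystr\"om identity: the interpolant solves $(I+\mathcal A^{(n,m)}_k)\psi=(2\tilde g,0)$ on the continuous space, and the linear system $(Q_k^{(n,m)})$ is equivalent to this functional equation.

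The key steps, in order, are as follows.

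\emph{Step 1: continuity of the continuous operators in $k$.} We prove that $k\mapsto\mathcal A_k$ is norm-continuous on $K$. This should follow from the continuity of $\hat G_k$ in $k$, including at points of $\Lambda_\sigma$, which Section 3 establishes (note that $\Psi$ is continuous in $\alpha$). Combined with the invertibility from \cref{thm:main-1}, this gives $\sup_{k\in K}\|(I+\mathcal A_k)^{-1}\|<\infty$.

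\emph{Step 2: truncation error.} We show that $\sup_{k\in K}\|\mathcal A_k-\mathcal A_{k,m}\|\le C\mathrm e^{-\xi m}$, where $\mathcal A_{k,m}$ is the operator with the exact integration but truncated series. The Gaussian factors in the tails of the spectral sum, namely $\operatorname{erfc}$ at large $|k_s|$, decay exponentially in $m$, and so do the tails of the Taylor-type sum in $j$, uniformly for bounded $k$. The window $\chi(\cdot/m)$ cuts only terms that carry $E_1(\zeta^2|x|^2)$ with $|x|\gtrsim m$, and these are also exponentially small.

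\emph{Step 3: collective compactness and pointwise convergence, uniformly in $k$.} For fixed $m$ we show that the family $\{\mathcal A^{(n,m)}_k:n\ge N,\,m\ge M,\,k\in K\}$ is collectively compact and converges pointwise to $\mathcal A_{k,m}$. We also need equicontinuity in $k$ and $m$ of the kernel $H_{k,m}$ in the $C^{0,\alpha}$ norm; this uses $\gamma\in C^{2,\alpha}$ and the cancellation of singularities noted after the definition of $H_{k,m}$. Anselone's theorem, in a version made uniform over the compact parameter set, then gives $\|(\mathcal A_k-\mathcal A^{(n,m)}_k)\mathcal A^{(n,m)}_k\|\to0$ uniformly. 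From this we obtain uniform invertibility for $n\ge N$, $m\ge M$, together with the bound
\[
\|\psi-\psi^{(n,m)}\|\le C\bigl(\|(\mathcal A_{k,m}-\mathcal A^{(n,m)}_k)\psi\|+\mathrm e^{-\xi m}\bigr).
\]

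\emph{Step 4: the rate $n^{-\alpha}$.} We bound the consistency error $\|(\mathcal A_{k,m}-\mathcal A^{(n,m)}_k)\psi\|$ by $Cn^{-\alpha}$. To do so we first show that the exact solution $\tilde\varphi_k$ is Lipschitz, with a norm bounded uniformly in $k$: by the equation, $\tilde\varphi=2\tilde g-\mathcal A_k\psi$, and $\mathcal A_k$ maps $C$ into $C^{0,1}$ boundedly because its kernel is log-type and $\gamma\in C^{2,\alpha}$. Then the error of the trapezoidal and product quadrature applied to a kernel that is $C^{0,\alpha}$ in $\tau$ times a Lipschitz density is $O(n^{-\alpha})$. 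For the product part we use standard estimates for trigonometric interpolation in Hölder spaces, $\|f-P_nf\|_\infty\le C n^{-\alpha}\log n\,\|f\|_{C^{0,\alpha}}$. The logarithm in that estimate has to be absorbed, and we do this by estimating the weights in $L^1$ directly, exploiting the $1/|p|$ decay of $\widehat{\mathcal E}_p$.

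\emph{Step 5: condition number.} With the $\ell^\infty$ norm on $\mathbb C^{2n}\times\mathcal C_\sigma$, the matrix $B^{(n,m)}_k$ is uniformly bounded because the Nystr\"om weights have bounded $\ell^1$ row sums. The uniform bound on the inverse is transferred from the uniform bound on $(I+\mathcal A^{(n,m)}_k)^{-1}$ through the Nystr\"om interpolation and restriction maps, both of which have norm $O(1)$. One bookkeeping point needs attention: the factor $-\mathrm{diag}(1+\sqrt{\cdot})$ in $A_k$ is combined with $I$ to give $B_{22}$, and the scaling of the $c$-block must be tracked accordingly.

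I expect the main obstacle to be Step 3, specifically the uniformity of Anselone's argument across Wood anomalies, where $\sqrt{k^2-k_s^2}$ is non-smooth in $k$. I would first prove joint continuity of $(k,t,\tau)\mapsto H_{k,m}$ on $K\times\mathbb T^2$, with a modulus uniform in $m$, and then argue by contradiction using sequences $k_j\to k$ together with compactness of $K$.
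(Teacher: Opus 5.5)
Your architecture is the paper's: pull back to $C(\mathbb T)\times\mathcal C_\sigma$, run a $k$-uniform Anselone--Moore argument on $K$, treat truncation as an $\mathrm e^{-\xi m}$ perturbation, prove an $n^{-\alpha}$ consistency bound from Lipschitz regularity of the exact solution, and transfer the condition number through the restriction and interpolation maps.

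\textbf{The gap.} Step 4 fails as written. You claim that $\mathcal A_k$ maps $C(\mathbb T)$ boundedly into $C^{0,1}(\mathbb T)$ ``because its kernel is log-type''. This is false for the single-layer part. That part is present because $\eta>0$, and its kernel carries a $\log|t-\tau|$ singularity times a smooth nonvanishing factor. Already for $u(t)=\int_0^{2\pi}\log\bigl(4\sin^2\tfrac{t-\tau}{2}\bigr)\varphi(\tau)\,\mathrm d\tau$ one has $\hat u_p=-2\pi\hat\varphi_p/|p|$ for $p\neq0$. So $u'$ is a multiple of the conjugate function of $\varphi$. There are continuous $\varphi$ whose conjugate function is unbounded, e.g.\ $\varphi(t)=\sum_{p\ge2}\sin(pt)/(p\log p)$. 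A logarithmic kernel therefore maps $C$ only into the log-Lipschitz class. One pass through $\tilde\varphi_k=2\tilde g_k-(\mathcal A_k\psi_k)_1$ gives $\tilde\varphi_k\in C^{0,\beta}$ for $\beta<1$, not the uniform Lipschitz bound you need. For $\alpha=1$, a $C^{0,\beta}$ bound yields only $n^{-\beta}$, not the claimed $n^{-1}$.

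\textbf{The repair.} Use the paper's route via \cref{prop:cts-lipschitz} and work in $X:=C^{0,1}(\mathbb T)\times\mathcal C_\sigma$.
\begin{itemize}
\item For a Lipschitz density, the $t$-derivative of the logarithmic part can be moved onto the density, after the factorization $|\gamma(t)-\gamma(\tau)|=|t-\tau|\,q(t,\tau)$.
\item The double-layer kernel satisfies $|\partial_t\mathcal K(t,\tau)|\le C|t-\tau|^{\alpha-1}$ because $\gamma\in C^{2,\alpha}$.
\item Hence $\mathcal A_k$ is compact on $X$ and norm-continuous in $k$ there.
\item $I+\mathcal A_k$ is injective on $C(\mathbb T)\times\mathcal C_\sigma\supset X$. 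Riesz theory then gives invertibility on $X$, and compactness of $K$ gives $\sup_{k\in K}\|(I+\mathcal A_k)^{-1}\|_{\mathcal L(X)}<\infty$.
\item Continuity of $k\mapsto\tilde g_k$ into $C^{0,1}$ then yields $\sup_{k\in K}\|\tilde\varphi_k\|_{C^{0,1}}<\infty$.
\end{itemize}
A bootstrap $C\to C^{0,\beta}\to C^{0,1}$ would also work.

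\textbf{Two smaller points.}
\begin{itemize}
\item For the product rule, pair in $L^2$ rather than $L^1$ against $L^\infty$: $|\mathcal E\ast(f-P_nf)|\le\|\mathcal E\|_{L^2}\|f-P_nf\|_{L^2}$, with $\|f-P_nf\|_{L^2}\le Cn^{-1}\operatorname{Lip}(f)$ by Parseval and aliasing. No logarithm appears.
\item Truncating before discretizing forces you to bound $H_{k,m}$ in $C^{0,\alpha}$ uniformly in $m$. It also forces you to run Anselone over the non-compact set $K\times\{m\ge M\}$. The paper avoids both. It applies Anselone--Moore to the untruncated $\widetilde A_{k,n}$ on $K$ alone. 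It then uses a Neumann series with $\|\widetilde A_{k,n,m}-\widetilde A_{k,n}\|\le 2\pi\sup|H_k-H_{k,m}|\le C\mathrm e^{-\xi m}$, which is uniform in $n$ and needs only sup-norm bounds.
\end{itemize}
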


\section{Quasi-periodic Green function}
We begin with the analysis of the quasi-periodic Green function $G_k$, defined in \cref{prop:green-modified}. The following standard convergence result, which dates back to Bruno and Reitich \cite{bruno1992solution}, will be used repeatedly.

\begin{lemma}\label{lemma:Gk-unif-conv}
    Let $\sigma\subset\mathbb Z$ be a finite subset. Then the series
    \begin{align*}
        \frac{\mathrm i}{2L}\sum_{s\in\mathbb Z\setminus\sigma} \frac{\mathrm e^{\mathrm ik_sx_1+\mathrm i\sqrt{k^2-k_s^2}|x_2|}}{\sqrt{k^2-k_s^2}}
    \end{align*}
    converges uniformly for $(x,k)$ in compact subsets of $(\mathbb R^2\setminus\Theta) \times ( (\mathbb R\setminus\Lambda)\cup \Lambda_{\sigma})$.
\end{lemma}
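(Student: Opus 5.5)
The plan is to split the series into a finite block of low-order terms and an infinite tail of evanescent modes. The tail is handled by comparison with an explicit geometric-type series and Dirichlet's test.

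Fix a compact set $V\times K\subset(\mathbb R^2\setminus\Theta)\times((\mathbb R\setminus\Lambda)\cup\Lambda_\sigma)$ and let $\kappa:=\max_{k\in K}|k|$.

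\emph{Low-order terms.} Choose $S\in\mathbb N$ such that $|k_s|\ge\kappa+1$ for all $|s|> S$. Only finitely many $s\in\mathbb Z\setminus\sigma$ satisfy $|s|\le S$. For each such $s$ the function $k\mapsto k^2-k_s^2$ is continuous on $K$ and never vanishes there. Indeed, $k^2=k_s^2$ would mean $k\in\Lambda$ with $s\in\sigma_\ast(k)$. If $k\notin\Lambda$ this is impossible, and if $k\in\Lambda_\sigma$ it forces $s\in\sigma$. By compactness, $|k^2-k_s^2|\ge c_0>0$ on $K$ for these $s$. Each such term is therefore bounded and continuous on $V\times K$. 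They contribute a finite sum and play no role in the convergence question.

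\emph{Tail.} For $|s|>S$ we have $\sqrt{k^2-k_s^2}=\mathrm i\sqrt{k_s^2-k^2}$, so the $s$-th term equals
\[
\frac{1}{2L}\,\frac{\mathrm e^{\mathrm ik_sx_1-\sqrt{k_s^2-k^2}\,|x_2|}}{\sqrt{k_s^2-k^2}}.
\]
I compare it with the model term $a_s(x):=\mathrm e^{\mathrm ik_sx_1-|k_s||x_2|}/|k_s|$.

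The difference consists of two pieces.
\begin{itemize}
\item The factor difference satisfies $\bigl|1/\sqrt{k_s^2-k^2}-1/|k_s|\bigr|=O(|s|^{-3})$ uniformly in $k\in K$.
\item Using $\bigl|\mathrm e^{-a|x_2|}-\mathrm e^{-b|x_2|}\bigr|\le|a-b|\,|x_2|$ for $a,b\ge0$, with $|a-b|=|k_s|-\sqrt{k_s^2-k^2}=O(|s|^{-1})$ and $|x_2|$ bounded on $V$, the exponential difference contributes $O(|s|^{-2})$.
\end{itemize}
Both bounds are uniform on $V\times K$. Hence the difference series converges uniformly by the Weierstrass M-test.

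\emph{Model series.} It remains to treat $\sum_{s>S}a_s$ and $\sum_{s<-S}a_s$; these do not depend on $k$. For $s>S$ we have $k_s>0$, and
\[
a_s(x)=\frac{\mathrm e^{\mathrm i\beta(x_1+\mathrm i|x_2|)/L}\,z^s}{k_s},\qquad z:=\mathrm e^{2\pi\mathrm i(x_1+\mathrm i|x_2|)/L},\quad |z|\le1.
\]
The partial sums satisfy $\bigl|\sum_{s=p}^q z^s\bigr|\le 2/|1-z|$. Now $z=1$ exactly when $x_2=0$ and $x_1\in L\mathbb Z$, that is, when $x\in\Theta$. Since $V$ is compact and disjoint from $\Theta$ and $z$ depends continuously on $x$, we get $|1-z|\ge c_1>0$ on $V$. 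The weights $1/k_s$ decrease monotonically to $0$. Dirichlet's test with Abel summation therefore gives uniform Cauchy estimates of order $1/(c_1k_p)$, hence uniform convergence on $V$. The case $s<-S$ is symmetric, with $\bar z$-type factors.

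Combining the finite block, the M-test bound and the Dirichlet bound proves the lemma. The main obstacle is the tail near $x_2=0$, where there is no exponential decay. This is exactly where the Abel-summation step is needed, and where the hypothesis $x\notin\Theta$ enters, through $|1-z|\ge c_1$.
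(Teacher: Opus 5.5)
Your proof is correct. The paper gives no proof of this lemma; it only attributes the result to Bruno and Reitich. There is therefore no argument in the paper to match yours against.

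Your decomposition is a valid self-contained proof. It has three parts:
\begin{itemize}
\item a finite block of terms that are non-singular on $K$;
\item an $O(|s|^{-2})$ perturbation, controlled uniformly in $k\in K$ by the M-test;
\item a $k$-independent model series, controlled by Dirichlet's test.
\end{itemize}
It correctly isolates the only delicate region, the line $x_2=0$, where absolute convergence fails (as the paper notes right after the statement). The hypothesis $x\notin\Theta$ enters exactly through $|1-z|\ge c_1$.

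One point needs tidying. The requirement $|k_s|\ge\kappa+1$ for $|s|>S$ does not by itself give $k_s>0$ for all $s>S$. If $2\pi/L>2(\kappa+1)$, the consecutive values $k_s$ can jump over $(-\kappa-1,\kappa+1)$, so the sign change of $k_s$ can lie beyond $\pm S$. You need a fixed sign to have monotone weights in the Abel summation. The fix is to enlarge $S$ so that also $\pm k_s>0$ for $\pm s>S$. It is also convenient to require $\sigma\subset\{-S,\ldots,S\}$, so that your tail runs over all $|s|>S$ and matches the model series term by term.

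For comparison, a more classical route subtracts a Laplace-type lattice sum and evaluates it in closed form via $\sum_{s\ge1}z^s/s=-\log(1-z)$. That route also exhibits the logarithmic singularity at $\Theta$ explicitly. Your Dirichlet-test version needs no closed form and handles the weights $1/k_s$ directly, which is all the lemma requires.
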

Note that absolute convergence fails on compact sets that intersect the line $x_2=0$.

\subsection{Continuity with respect to $k$}

In light of \cref{thm:main-1}, we are particularly interested in the continuity with respect to $k$ at and around Wood anomaly frequencies. 
Although each $G_k$ has logarithmic singularities at the lattice points $\Theta$, these spatial singularities are independent of $k$ and are cancelled in differences of Green functions. The additional difficulty near a Wood anomaly is the singular Rayleigh coefficient of the grazing mode. The following proposition shows that, after subtracting this explicit singular coefficient, the Green function depends continuously on $k$ in $C^{1,\alpha}_{\rm loc}(\mathbb R^2)$.

\begin{proposition}\label{prop:Gk-cts}
    Let $k\in\mathbb R$. Then 
    \begin{align*}
        G_{\kappa} - \frac{\mathrm i}{2L} \sum_{s\in\sigma_\ast(k)} \frac{\mathrm e^{\mathrm ik_sx_1}}{\sqrt{\kappa^2-k_s^2}} - G_k \longrightarrow 0 \quad\text{in }C^{1,\alpha}_\mathrm{loc}(\mathbb R^2) \quad\text{as }\kappa\to k \text{ with }\kappa\in\mathbb R\setminus\Lambda
    \end{align*}
    for all $\alpha\in (0,1)$. Moreover, for each $N\in\mathbb N\cup \{0\}$ and $\alpha\in (0,1)$, the function $H_k^N(x) := G_k(x) - \sum_{n=-N}^N \Phi^{(n)}_k(x) \mathrm e^{\mathrm in\beta}$ satisfies 
    \begin{align*}
        \nabla \left(H^N_{\kappa} - \frac{\mathrm i}{2L} \sum_{s\in\sigma_\ast(k)} \frac{\mathrm e^{\mathrm ik_sx_1}}{\sqrt{\kappa^2-k_s^2}} \right) - \nabla H^N_k \longrightarrow 0 \quad\text{in }C^{1,\alpha}_\mathrm{loc}(V_N) \quad\text{as }\kappa\to k \text{ with }\kappa\in\mathbb R\setminus\Lambda,
    \end{align*}
    where $V_N:=\mathbb (R^2\setminus\Theta)\cup \{-NLe_1,\ldots,NLe_1\}$.
\end{proposition}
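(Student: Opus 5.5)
Set $D_\kappa := G_\kappa - \frac{\mathrm i}{2L}\sum_{s\in\sigma_\ast(k)} \frac{\mathrm e^{\mathrm ik_sx_1}}{\sqrt{\kappa^2-k_s^2}} - G_k$. For $\kappa$ close to $k$ we have $\sigma_\ast(\kappa)\subset\sigma_\ast(k)$, and since $\kappa\notin\Lambda$, $\sigma_\ast(\kappa)=\emptyset$. Using \cref{eq:green-all} for both $G_\kappa$ and $G_k$, the difference splits into a grazing part and a non-grazing part:
\begin{align*}
    D_\kappa(x) = \frac{\mathrm i}{2L}\sum_{s\in\sigma_\ast(k)}\mathrm e^{\mathrm ik_sx_1}\frac{\mathrm e^{\mathrm i\alpha_s(\kappa)|x_2|}-1}{\alpha_s(\kappa)} + \frac{1}{2L}\sum_{s\in\sigma_\ast(k)}|x_2|\mathrm e^{\mathrm ik_sx_1} + \frac{\mathrm i}{2L}\sum_{s\notin\sigma_\ast(k)}(\cdots)_\kappa-(\cdots)_k,
\end{align*}
with $\alpha_s(\kappa):=\sqrt{\kappa^2-k_s^2}\to0$.

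\textbf{Grazing terms.} Write $(\mathrm e^{\mathrm i\alpha|x_2|}-1)/\alpha = \mathrm i|x_2|\,\psi(\alpha|x_2|)$ with $\psi(z)=(\mathrm e^{\mathrm iz}-1)/(\mathrm iz)$ entire and $\psi(0)=1$. The grazing contribution then equals
\[
-\frac{1}{2L}\sum_{s\in\sigma_\ast(k)}|x_2|\mathrm e^{\mathrm ik_sx_1}\bigl(\psi(\alpha_s|x_2|)-1\bigr).
\]
The factor $|x_2|$ is only Lipschitz across $x_2=0$. However, $|x_2|(\psi(\alpha|x_2|)-1)=|x_2|\cdot O(\alpha|x_2|)$, which is a smooth function of $x_2^2$ times $\alpha$ plus an odd remainder. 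More concretely, $|x_2|(\psi(\alpha|x_2|)-1)=\sum_{j\ge1}c_j\alpha^j|x_2|^{j+1}$, and each $|x_2|^{j+1}$ with $j\ge1$ lies in $C^{1,1}_{\rm loc}$. Hence this term tends to $0$ in $C^{1,\alpha}_{\rm loc}$, in fact in $C^{1,1}_{\rm loc}$, as $\alpha\to0$. This also works for complex $\alpha$ (imaginary when $|\kappa|<|k_s|$), since the series converges locally uniformly with all its derivatives.

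\textbf{Non-grazing terms.} This is the main obstacle. Both $G_\kappa$ and $G_k$ have the same logarithmic singularities, but the difference of two distinct Hankel functions is not obviously zero. Expanding $\frac{\mathrm i}{4}H_0^{(1)}(\kappa r)-\frac{\mathrm i}{4}H_0^{(1)}(kr)$ shows it is $C^{1,\alpha}$ near $r=0$; it behaves like $r^2\log r$ with a coefficient that is $O(|\kappa^2-k^2|)$, so it tends to $0$ in $C^{1,\alpha}_{\rm loc}$.

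I would argue as follows. Fix a large bounded set and $N$, and set $W_\kappa:=D_\kappa-\sum_{|n|\le N}(\Phi^{(n)}_\kappa-\Phi^{(n)}_k)\mathrm e^{\mathrm in\beta}$. Then:
\begin{enumerate}
\item Away from $\Theta$, \cref{lemma:Gk-unif-conv} with $\sigma=\sigma_\ast(k)$ gives uniform convergence of the non-grazing series jointly in $(x,\kappa)$. Since the grazing part is already controlled, $D_\kappa\to0$ uniformly on compacts of $\mathbb R^2\setminus\Theta$.
\item Near lattice points I would use interior elliptic estimates. Compute $(-\varDelta-k^2)D_\kappa=(\kappa^2-k^2)G_\kappa+(\text{grazing terms})$. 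Indeed, $F_\#$ cancels, $\mathrm e^{\mathrm ik_sx_1}$ satisfies $(-\varDelta-k^2)\mathrm e^{\mathrm ik_sx_1}=0$ for $s\in\sigma_\ast(k)$, and $(\kappa^2-k^2)G_\kappa$ is handled as follows: its grazing part is $O(|\kappa^2-k^2|/|\alpha_s|)=O(|\alpha_s|)\to0$ since $\kappa^2-k^2=\alpha_s^2$, and the rest is bounded in $L^p_{\rm loc}$.
\item Hence $\|(\varDelta+k^2)D_\kappa\|_{L^p(B)}\to0$ on balls $B$. The $L^p_{\rm loc}$ convergence $D_\kappa\to0$ on an annulus, or via the series in $L^p_{\rm loc}$ uniformly as in \cref{prop:green-modified}, combined with Calder\'on--Zygmund $W^{2,p}$ estimates for large $p$ and the Morrey embedding, then gives $D_\kappa\to0$ in $C^{1,\alpha}_{\rm loc}$ for every $\alpha<1$.
\end{enumerate}
To get the $L^p_{\rm loc}$ convergence needed as input for the elliptic estimate, I would bound the non-grazing series in $L^2$ on strips $|x_2|\le h$ via Parseval in $x_1$. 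This gives $\sum_s|a_s(\kappa)-a_s(k)|^2$-type bounds with $a_s\sim|k_s|^{-1}$, which tend to $0$ by dominated convergence.

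For the second statement, the cases split by region. On compacts of $\mathbb R^2\setminus\Theta$ it follows from the $C^{1,\alpha}$ result upgraded by the same elliptic bootstrap: apply $W^{3,p}$ estimates to $\nabla D_\kappa$, whose Helmholtz image $(\kappa^2-k^2)\nabla G_\kappa$ is smooth there. Near $nLe_1$ with $|n|\le N$, the subtracted Hankel functions make $H^N_\kappa$ analytic. There $\nabla(H^N_\kappa-\text{grazing}-H^N_k)$ satisfies a Helmholtz equation whose right-hand side $(\kappa^2-k^2)(H^N_\kappa+\dots)$ is bounded in $C^{0,\alpha}$ and tends to $0$. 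Schauder estimates together with the convergence on a surrounding annulus, already established, then yield convergence in $C^{1,\alpha}_{\rm loc}(V_N)$.
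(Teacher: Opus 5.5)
Your argument is correct, but it gets the key intermediate step differently from the paper. That step is convergence of $D_\kappa$ in a weak norm on full neighbourhoods of the lattice points, which feeds the interior $W^{2,p}$/Morrey estimate.

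The paper writes the equation with $-\varDelta-\kappa_n^2$, so the right-hand side $(\kappa_n^2-k^2)G_k+\frac{\mathrm i}{2L}\sum_{s\in\sigma_\ast(k)}\sqrt{\kappa_n^2-k_s^2}\,\mathrm e^{\mathrm ik_sx_1}$ involves only the fixed function $G_k$. It then proves convergence only away from $\Theta$:
\begin{enumerate}
\item uniform convergence via \cref{lemma:Gk-unif-conv};
\item an a priori (Caccioppoli-type) estimate upgrading this to $H^1_{\rm loc}(\mathbb R^2\setminus\Theta)$, hence $H^{1/2}$-convergence of traces on circles $\partial B(y_j;R)$ that avoid $\Theta$;
\item a Dirichlet problem on balls so small that $\sup_n\kappa_n^2$ stays below the first Dirichlet eigenvalue, giving uniform coercivity and $H^1(B(y_j;R))$-convergence.
\end{enumerate}

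You instead obtain $L^2$-convergence directly across the lattice points from the spectral representation. You use Parseval in $x_1$ on strips $|x_2|\le h$, and dominated convergence with the $\kappa$-uniform bound $|\kappa^2-k_s^2|^{-1/2}\lesssim(1+|s|)^{-1}$ for $s\notin\sigma_\ast(k)$. The grazing terms you treat explicitly.

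What each route buys:
\begin{itemize}
\item Your route is more hands-on and avoids traces and coercivity. However, it relies on the Fourier structure of $G_\kappa$.
\item Because you use $-\varDelta-k^2$, you also need a $\kappa$-uniform $L^p_{\rm loc}$ bound on the non-grazing part of $G_\kappa$. The cleanest way to get it is to substitute $G_\kappa=D_\kappa+G_k+\frac{\mathrm i}{2L}\sum_{s\in\sigma_\ast(k)}\mathrm e^{\mathrm ik_sx_1}/\sqrt{\kappa^2-k_s^2}$. This turns your right-hand side into the paper's plus $(\kappa^2-k^2)D_\kappa$, and you can then bootstrap from $W^{2,2}$.
\item The paper's route needs only the convergence away from $\Theta$ supplied by \cref{lemma:Gk-unif-conv}, and nothing about the series near the lattice points.
\end{itemize}
Your treatment of the gradient statement is essentially the paper's.

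Two small corrections. First, interior elliptic estimates need the weak norm on the whole ball. Your appeals to convergence ``on a surrounding annulus'' should be replaced by the full-ball convergence from your first part; annulus convergence alone is exactly what the paper's Dirichlet-problem step is needed for. Second, for $k=0$ the difference $\Phi^{(n)}_\kappa-\Phi^{(n)}_0$ contains the divergent constant $-\frac{1}{2\pi}\log|\kappa|$, so it does not tend to zero in $C^{1,\alpha}_{\rm loc}$. Only its gradient does, which is all the second assertion requires.
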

\begin{proof}
    Let $(\kappa_n)\subset\mathbb R\setminus\Lambda$ be a sequence such that $\kappa_n\to k$. Then
    \begin{align*}
        v_n:= G_{\kappa_n} - \frac{\mathrm i}{2L} \sum_{s\in\sigma_\ast(k)} \frac{\mathrm e^{\mathrm ik_sx_1}}{\sqrt{\kappa_n^2-k_s^2}} - G_k
    \end{align*}
    satisfies the following equation for each $n\in\mathbb N$:
    \begin{align}\label{prop:Gk-cts:1}
            (-\varDelta - \kappa_n^2)v_n = (\kappa_n^2-k^2)G_k + \frac{\mathrm i}{2L}\sum_{s\in\sigma_\ast(k)}\sqrt{\kappa_n^2-k_s^2} \mathrm e^{\mathrm ik_sx_1} \quad\text{in }\mathbb R^2. 
    \end{align}

    Choose $R>0$ small enough so that  $\sup_{n\in\mathbb N}\kappa_n^2$ is less than the smallest eigenvalue of the negative Dirichlet Laplacian $-\varDelta$ in $B(0;R)$. Let $\rho\in(0,R)$ and let $V\Subset\mathbb R^2$. Since $\overline V$ is compact and $\Theta$ is discrete, there exist finitely many points $y_1,\ldots,y_J\in\mathbb R^2$ such that 
    \begin{align*}
            \overline V\subset \bigcup_{j=1}^J B(y_j;\rho)\quad \text{and}\quad
            \Theta\cap \partial B(y_j;R) = \emptyset. 
    \end{align*}
    Let $j\in\{1,\ldots,J\}$ be fixed. In view of the finite covering, it suffices to show that $v_n\to 0$ in $C^{1,\alpha}(B[y_j;\rho])$. Since $G_k\in L^p(B(y_j;R))$ by \cref{prop:green-modified}, the right-hand side of \cref{prop:Gk-cts:1} tends to zero in $L^p(B(y_j;R))$ as $n\to\infty$ for all $p\in [1,\infty)$. 
    
    We assert that
    \begin{align}\label{prop:Gk-cts:2}
        v_n \longrightarrow 0 \quad\text{in }H^{1/2}(\partial B(y_j;R)).
    \end{align}
    To see this, it suffices to show that $v_n\to 0$ in $H_\mathrm{loc}^1(\mathbb R^2\setminus\Theta)$ since $\Theta\cap \partial B(y_j;R) = \emptyset$. We have
    \begin{align*}
        v_n &= \frac{\mathrm i}{2L}\sum_{s\in\mathbb Z\setminus\sigma_\ast(k)} \left( 
            \frac{\mathrm e^{\mathrm ik_sx_1+\mathrm i\sqrt{\kappa_n^2-k_s^2}|x_2|}}{\sqrt{\kappa_n^2-k_s^2}} - \frac{\mathrm e^{\mathrm ik_sx_1+\mathrm i\sqrt{k^2-k_s^2}|x_2|}}{\sqrt{k^2-k_s^2}}
        \right)
    \\
        &\qquad + \frac{\mathrm i}{2L}\sum_{s\in\sigma_\ast(k)} \left(
            \frac{\mathrm e^{\mathrm ik_sx_1+\mathrm i\sqrt{\kappa_n^2-k_s^2}|x_2|}}{\sqrt{\kappa_n^2-k_s^2}} - \frac{\mathrm e^{\mathrm i k_sx_1}}{\sqrt{\kappa_n^2-k_s^2}} - \mathrm i|x_2|\mathrm e^{\mathrm ik_sx_1}
        \right), 
    \end{align*}
    where the first and second series respectively vanish as $n\to\infty$ uniformly in compact subsets of $\mathbb R^2\setminus\Theta$ due to \cref{lemma:Gk-unif-conv} and the fact that $(\mathrm e^{\mathrm iz t}-1)/z - \mathrm{i}t \to 0$ as $z\to 0$ uniformly for $t$ in compact subsets of $\mathbb R$. In particular, $v_n\to 0$ in $L^2_\mathrm{loc}(\mathbb R^2\setminus\Theta)$. In addition, we have
    \begin{align}\label{prop:Gk-cts:3}
         (\kappa_n^2-k^2)G_k + \frac{\mathrm i}{2L}\sum_{s\in\sigma_\ast(k)}\sqrt{\kappa_n^2-k_s^2} \mathrm e^{\mathrm ik_sx_1} \longrightarrow 0 \quad\text{in }L^p_\mathrm{loc}(\mathbb R^2) \quad\text{for all }p\in [1,\infty).
    \end{align}
    We apply an a priori $H^1$ estimate to \cref{prop:Gk-cts:1} and obtain $v_n\to 0$ in $H_\mathrm{loc}^1(\mathbb R^2\setminus\Theta)$, which implies \cref{prop:Gk-cts:2}. 

    By the assumption on $R$, we have an $n$-uniform coercivity for the Dirichlet problem; thus the convergence \cref{prop:Gk-cts:2,prop:Gk-cts:3} imply $v_n\to 0$ in $H^1(B(y_j;R))$ and hence in $L^p(B(y_j;R))$ for all $p\in [1,\infty)$. Therefore a standard interior $W^{2,p}$ estimate shows that $v_n\to 0$ in $W^{2,p}(B(y_j;\rho))$ for all $j=1,\ldots,J$ and arbitrarily large $p<\infty$. Therefore, the Morrey--Sobolev inequality and the open ball covering yield $v_n\to 0$ in $C^{1,\alpha}(\overline V)$ for any $\alpha\in(0,1)$.

    We next show the continuity of $\nabla H^N_k$ in $C^{1,\alpha}_\mathrm{loc}(V_N)$ with finite-term correction. Define
    \begin{align*}
        w_\kappa(x) := H^N_{\kappa}(x) - \frac{\mathrm i}{2L} \sum_{s\in\sigma_\ast(k)} \frac{\mathrm e^{\mathrm ik_sx_1}}{\sqrt{\kappa^2-k_s^2}} - H^N_{k}(x).
    \end{align*}
    Then it solves
    \begin{align*}
        (-\varDelta-\kappa^2)w_\kappa = (\kappa^2-k^2)H^N_k + \frac{\mathrm i}{2L}\sum_{s\in\sigma_\ast(k)}\sqrt{\kappa^2-k_s^2}\mathrm e^{\mathrm ik_sx_1} =:h_\kappa \quad\text{in }\mathcal D^\prime(V_N).
    \end{align*}
    In view of $\nabla (\Phi^{(n)}_\kappa-\Phi^{(n)}_k) \to 0$ in $C^{0,\alpha}_\mathrm{loc}(\mathbb R^2)$, the previous assertion yields that $\nabla w_\kappa \to 0$ in $C^{0,\alpha}_\mathrm{loc}(\mathbb R^2)$ as $\kappa\to k$, whereas $\nabla h_\kappa\to 0$ in $L^{p}_\mathrm{loc}(V_N)$. Thus $\nabla w_\kappa \to 0$ in $W^{2,p}_\mathrm{loc}(V_N)$, i.e., in $C^{1,\alpha}_\mathrm{loc}(V_N)$.
\end{proof}

The continuity of the boundary integral operators with respect to $k$ is established from this result, summarized as follows.

\begin{proposition}\label{prop:cts-lipschitz}
    Let $\sigma\subset\mathbb Z$ be a finite subset.
    Then the linear operators $S_k$ and $D_k$ from $C_\beta(\Gamma_\mathrm{per})$ into itself are compact for each $k\in(\mathbb R\setminus\Lambda)\cup\Lambda_\sigma$, and $k\mapsto S_k$ and $k\mapsto D_k$ are continuous from $(\mathbb R\setminus\Lambda)\cup\Lambda_\sigma$ into $\mathcal L(C_\beta(\Gamma_\mathrm{per}),C_\beta(\Gamma_\mathrm{per}))$.
    Moreover, if $\gamma\in C^{2,\alpha}(\mathbb R)$ for some $\alpha\in (0,1]$, then $S_k$ and $D_k$ from $C^{0,1}_\beta(\Gamma_\mathrm{per})$ into itself are compact for each $k\in(\mathbb R\setminus\Lambda)\cup\Lambda_\sigma$, and $k\mapsto S_k$ and $k\mapsto D_k$ are continuous from $(\mathbb R\setminus\Lambda)\cup\Lambda_\sigma$ into $\mathcal L(C^{0,1}_\beta(\Gamma_\mathrm{per}),C^{0,1}_\beta(\Gamma_\mathrm{per}))$.
\end{proposition}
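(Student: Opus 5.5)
The plan is to split each kernel into a $k$-independent logarithmic part plus a remainder that is continuous in $k$. Fix $k_0\in(\mathbb R\setminus\Lambda)\cup\Lambda_\sigma$ and a small neighborhood $U$ of $k_0$ in that set. Since $\Gamma$ is compact and $\gamma$ is an embedding, only finitely many lattice translates $x-y\in\Theta$ occur for $x,y\in\Gamma$, say $|n|\le N$. We then write
\begin{align*}
    \hat G_k(x) = \sum_{n=-N}^{N}\Phi^{(n)}_k(x)\mathrm e^{\mathrm in\beta} + H^N_k(x) - \frac{\mathrm i}{2L}\sum_{s\in\sigma\setminus\sigma_\ast(k)}\frac{\mathrm e^{\mathrm ik_sx_1}}{\sqrt{k^2-k_s^2}}.
\end{align*}
The key observation is that the subtracted finite sum is exactly the one that regularizes \cref{prop:Gk-cts}. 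For $\kappa\in U\setminus\Lambda$ we have $\sigma_\ast(\kappa)=\emptyset$. We also have $\sigma_\ast(k_0)\subset\sigma$. Hence
\begin{align*}
    \hat G_\kappa-\hat G_{k_0} = \Bigl(G_\kappa - \tfrac{\mathrm i}{2L}\textstyle\sum_{s\in\sigma_\ast(k_0)}\frac{\mathrm e^{\mathrm ik_sx_1}}{\sqrt{\kappa^2-k_s^2}}-G_{k_0}\Bigr) - \tfrac{\mathrm i}{2L}\textstyle\sum_{s\in\sigma\setminus\sigma_\ast(k_0)}\Bigl(\frac{1}{\sqrt{\kappa^2-k_s^2}}-\frac{1}{\sqrt{k_0^2-k_s^2}}\Bigr)\mathrm e^{\mathrm ik_sx_1}.
\end{align*}
The last sum is harmless, because $k_0^2\ne k_s^2$ for these $s$. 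By \cref{prop:Gk-cts}, $\hat G_\kappa-\hat G_{k_0}\to0$ in $C^{1,\alpha}_{\rm loc}(\mathbb R^2)$. Points $\kappa\in U\cap\Lambda_\sigma$ other than $k_0$ exist only when $k_0\notin\Lambda$ is isolated from them, or else coincide with $k_0$ after shrinking $U$. Either way, continuity at points of $\mathbb R\setminus\Lambda$ follows from the same proposition applied with $\sigma_\ast=\emptyset$.

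For compactness at a fixed $k$ on $C_\beta$, I will use the decomposition above. The smooth part $H^N_k$ plus the finite sum is $C^1$ on a neighborhood of $\Gamma-\Gamma$, so it gives an operator with continuous kernel, which is compact by Arzel\`a--Ascoli. The part $\Phi^{(n)}_k(x-y)$ is a logarithmically singular kernel, and the classical theory for weakly singular integral operators on $C^{1,1}$ curves gives compactness. For $D_k$, the normal derivative of $\Phi^{(n)}_k$ has the standard bound
\begin{align*}
    \frac{|\nu(y)\cdot(x-y)|}{|x-y|^2}\le C
\end{align*}
on $C^{1,1}$ curves (after the quasi-periodic shift by $nLe_1$), so its kernel is bounded. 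Continuity in $k$ then splits into two pieces. The free-space parts $\Phi^{(n)}_\kappa-\Phi^{(n)}_{k_0}$ and their gradients converge uniformly on compacts, since $H_0^{(1)}(z)-\frac{2\mathrm i}{\pi}\log z$ is smooth in $z^2$ (for $k_0=0$, the constant shift in the logarithm is absorbed in the smooth part). The remainder converges in $C^1$ by \cref{prop:Gk-cts}. The operator norm on $C(\Gamma)$ is bounded by the sup of the kernel difference times $|\Gamma|$, which gives norm continuity.

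For the $C^{0,1}_\beta$ statement with $\gamma\in C^{2,\alpha}$, I need Lipschitz bounds in $x$ of $\int K(x,y)\varphi(y)\,\mathrm ds(y)$. For smooth kernels in $C^1$, the Lipschitz constant is controlled by $\sup|\nabla_xK|$, which again converges by \cref{prop:Gk-cts}. The $C^{1,\alpha}$ convergence of the gradient (second statement of \cref{prop:Gk-cts}) handles the normal derivative kernel. For the singular parts I plan to invoke the classical mapping results: for a $C^{2,\alpha}$ curve, the single layer with log kernel maps $C(\Gamma)$ into $C^{0,\alpha'}$, indeed into $C^{0,1}$ when applied to Lipschitz densities. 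The double layer kernel extends to a $C^{0,\alpha}$ function on $\mathbb T\times\mathbb T$, so it maps into $C^{0,\alpha}$. Compactness into $C^{0,1}$ then follows by showing that these operators map $C^{0,1}$ boundedly into $C^{1,\alpha'}$-type spaces, or into $C^{0,1}$ with an extra modulus, and using compact embedding.

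The main obstacle is this last point: proving boundedness and compactness on the Lipschitz space rather than on H\"older spaces. For the log kernel I would first parametrize, write $\log|\gamma(t)-\gamma(\tau)|=\log|2\sin\frac{t-\tau}2|+$ a $C^{1,\alpha}$ kernel, and use that convolution with $\log|2\sin\frac{\cdot}{2}|$ maps $L^\infty$ into $C^{0,\beta'}$ and Lipschitz functions into $C^{1,\beta'}$. For the double layer, the kernel is $C^{0,\alpha}$, with $t$-derivative bounded by $C|t-\tau|^{\alpha-1}$. Lipschitz continuity then follows by differentiating under the integral, with an $L^\infty$ bound and equicontinuity supplied by the H\"older modulus. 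The $k$-dependence only enters through smooth kernels, so continuity is inherited.
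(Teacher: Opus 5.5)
Your architecture matches the paper's. You split off the $k$-independent Laplace logarithms $\Phi^{(n)}_0$, $|n|\le N$, and show that the corresponding single and double layers are compact on $C^{0,1}_\beta(\Gamma_{\rm per})$ for a $C^{2,\alpha}$ curve; your bound $|\partial_t\mathcal K|\le C|t-\tau|^{\alpha-1}$ for the Laplace double-layer kernel is exactly the paper's argument. Everything depending on $k$ is then left to \cref{prop:Gk-cts}, and the $C_\beta$ part of your proposal is fine.

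The gap is the claim that the $k$-dependence enters only through smooth kernels. It rests on an incorrect expansion: $H_0^{(1)}(z)-\frac{2\mathrm i}{\pi}\log z$ is not smooth in $z^2$. What is entire in $z^2$ is $H_0^{(1)}(z)-\frac{2\mathrm i}{\pi}J_0(z)\log z$. Hence, with $\rho=|x-nLe_1|$, the remainder $\Phi^{(n)}_k-\Phi^{(n)}_0$ contains $-\frac{1}{2\pi}\bigl(J_0(|k|\rho)-1\bigr)\log\rho=\frac{k^2}{8\pi}\rho^2\log\rho+O(\rho^4\log\rho)$. Likewise $\Phi^{(n)}_\kappa-\Phi^{(n)}_{k_0}$ contains $\frac{\kappa^2-k_0^2}{8\pi}\rho^2\log\rho$. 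These terms are $C^{1,\alpha}$, which is why your single-layer remainder and all $C_\beta$ statements survive. However, their Hessians blow up like $\log\rho$ at the lattice points.

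This breaks the step you need for $D_k$ on $C^{0,1}_\beta(\Gamma_{\rm per})$. For a double-layer kernel $K(x,y)=\nu(y)\cdot\nabla W(x-y)$ you have $\nabla_xK=D^2W(x-y)\nu(y)$. So $\sup|\nabla_xK|=\infty$ both when $W$ is the fixed-$k$ remainder ($k\neq0$) and when $W=\hat G_\kappa-\hat G_{k_0}$. The second assertion of \cref{prop:Gk-cts} controls only $D^2H^N$. It does not control $D^2\sum_{|n|\le N}\mathrm e^{\mathrm in\beta}(\Phi^{(n)}_\kappa-\Phi^{(n)}_{k_0})$, which is unbounded near the lattice points for every $\kappa\ne k_0$. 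So neither compactness of the remainder part of $D_k$ on $C^{0,1}_\beta$ nor norm continuity of $k\mapsto D_k$ in $\mathcal L(C^{0,1}_\beta(\Gamma_{\rm per}),C^{0,1}_\beta(\Gamma_{\rm per}))$ follows from what you wrote.

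The paper closes this by trading sup bounds for $L^1$ bounds. With $R_k=\Phi^{(0)}_k-\Phi^{(0)}_0$ it uses $|\nabla R_k(r)|\le C|r|(1+|\log|r||)$ and $|D^2R_k(r)|\le C(1+|\log|r||)$, locally uniformly in $k$. This also holds at $k_0=0$, where only an additive constant diverges and the gradient removes it. From uniform integrability and pointwise convergence it obtains $\sup_t\int_\Gamma|D^2(\hat G_\kappa-\hat G_{k_0})(\gamma(t)-y)|\,\mathrm ds(y)\to0$. It then applies the criterion that $t\mapsto Q(t,\cdot)\in C^1([0,2\pi];L^1(\Gamma))$ gives a compact operator $L^\infty(\Gamma)\to C^{0,1}([0,2\pi])$, with norm at most $\sup_t\bigl(\|Q(t,\cdot)\|_{L^1}+\|\partial_tQ(t,\cdot)\|_{L^1}\bigr)$.

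You already use an integrable $t$-derivative for the Laplace double layer, so the repair is to run the same argument on the remainder. Alternatively, the $\log\rho$ part of the Hessian is isotropic and is paired with $\nu(\gamma(\tau))\cdot\gamma'(t)=O(|t-\tau|)$, so the tangential derivative is in fact bounded. That cancellation would also have to be shown, and a bound on $\sup|\nabla_xK|$ does not provide it.
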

\begin{proof}
    The mapping property of $S_k$ and $D_k$ on $C_\beta(\Gamma_\mathrm{per})$ and their continuity with respect to $k$ are immediate from \cref{lemma:green-regularity,prop:Gk-cts}.

    Assume $\gamma\in C^{2,\alpha}(\mathbb R)$ and set $D=(\mathbb R\setminus\Lambda)\cup\Lambda_\sigma$ and
    $\Omega=\{ x-y:x,y\in\Gamma\}$.  Choose $N$ so large that
    $\Omega\Subset V_N:=\mathbb R^2\setminus
    \Theta_{\mathbb Z\setminus\{-N,\ldots,N\}}$.  We use the elementary kernel
    criterion that, if $Q\in C^1(J;L^1(\Gamma))$, $J=[0,2\pi]$, then
    \[
     (T_Q\varphi)(t):=\int_\Gamma Q(t,y)\varphi(y)\,ds(y),\qquad
     T_Q:L^\infty(\Gamma)\longrightarrow C^{0,1}(J)
    \]
    is compact. 
    We first split $S_k = S_0^N + (S_k - S_0^N)$ and $D_k = D_0^N + (D_k - D_0^N)$, where
    \begin{align*}
        (S_0^N\varphi)(x) &:= 2\sum_{n=-N}^N \mathrm e^{\mathrm in\beta} \int_\Gamma \Phi^{(n)}_0(x-y)\varphi(y)\mathrm ds(y),
        \\
        (D_0^N\varphi)(x) &:= 2\sum_{n=-N}^N \mathrm e^{\mathrm in\beta} \int_\Gamma \frac{\partial \Phi^{(n)}_0}{\partial\nu(y)}(x-y)\varphi(y)\mathrm ds(y).
    \end{align*}
    For each $n=-N,\ldots,N$, there exists a $C^1$ function $q_n$ such that $|\gamma(t)-\gamma(\tau)-nLe_1|=|t-\tau-2\pi n|q_n(t,\tau)$ and $q_n\ge c>0$ for some $c>0$, from which we confirm that $S^{N}_0$ is compact from $C^{0,1}_\beta(\Gamma)$ into $C^{0,1}(\Gamma)$. For the Laplace double-layer kernel
    \begin{align*}
     \mathcal K(t,\tau):=\frac1{\pi}
     \frac{(\gamma(t)-\gamma(\tau))\cdot\nu(\gamma(\tau))}
          {|\gamma(t)-\gamma(\tau)|^2},
    \end{align*}
    Taylor expansion gives the continuous diagonal value $\mathcal K(t,t)=\frac1{2\pi}\frac{\gamma''(t)\cdot\nu(\gamma(t))}{|\gamma'(t)|^2}$ and, by cancellation of the cubic terms, we have $|\partial_t\mathcal K(t,\tau)|\le C|t-\tau|^{\alpha-1}$. Hence $t\mapsto\mathcal K(t,\cdot)$ belongs to $C^1(J;L^1(J))$, and the criterion above shows that $D_0^N$ is compact into $C^{0,1}(\Gamma)$.
    
    It remains to treat the regular remainder. With
    $R_k:=\Phi_k^{(0)}-\Phi_0^{(0)}$, define
    \begin{align*}
     \mathcal M_k^N(r) := H_k^N(r)
     +\sum_{n=-N}^{N}\mathrm e^{\mathrm i n\beta}R_k(r-nLe_1)
     -\frac{\mathrm i}{2L}\sum_{s\in\sigma\setminus\sigma_*(k)}
     \frac{\mathrm e^{\mathrm i k_sr_1}}{\sqrt{k^2-k_s^2}}
    \end{align*}
    on a neighborhood $U$ of $\Omega$ with $\overline U\Subset V_N$. Let $k_0\in(\mathbb R\setminus\Lambda)\cup\Lambda_\sigma$. The first assertion of \cref{prop:Gk-cts} gives $t\mapsto(\mathcal M_k^N-\mathcal M_{k_0}^N)(\gamma(t)-\cdot)\to0$ in $C^1(J;L^1(\Gamma))$ as $k\to k_0$. For the double-layer kernel, observe that
    \begin{align*}
     \frac{d}{dt}
     \left[
     \nu(y)\cdot\nabla (\mathcal M_k^N-\mathcal M_{k_0}^N) (\gamma(t)-y)
     \right]
     =
     \nu(y)\cdot
     D^2 (\mathcal M_k^N-\mathcal M_{k_0}^N) (\gamma(t)-y)\gamma'(t).
    \end{align*}
    The second assertion of \cref{prop:Gk-cts} controls the Hessian of the
    regular part $H_k^N$, while the estimates $|\nabla R_k(r)|\le C|r|(1+|\log|r||)$ and $|D^2R_k(r)|\le C(1+|\log|r||)$ give
    \begin{align*}
     \sup_{t\in J}\int_\Gamma
     \left(
     |\nabla (\mathcal M_k^N-\mathcal M_{k_0}^N) (\gamma(t)-y)|
     +
     |D^2 (\mathcal M_k^N-\mathcal M_{k_0}^N) (\gamma(t)-y)|
     \right) \mathrm ds(y)\longrightarrow0.
    \end{align*}
    Consequently, $t\mapsto\nu(\cdot)\cdot
     \nabla (\mathcal M_k^N-\mathcal M_{k_0}^N) (\gamma(t)-\cdot)\to0$ in $C^1(J;L^1(\Gamma))$.
    
    Therefore, the associated operators $S_k - S_0^N$ and $D_k - D_0^N$ are compact from $C^{0,1}_\beta(\Gamma)$ into itself, and their families are
    continuous in operator norm.
\end{proof}

\subsection{Ewald's method}\label{ss:ewald}
Our boundary integral formulation is based on the quasi-periodic Green function $G_k$, defined in \cref{eq:green-all}. This series representation is, however, inconvenient for the Nystr\"om method since the logarithmic singularity at the lattice $\Theta$ is not explicit. To remedy this, we derive another formula for $G_k$ using Ewald's method. For details, see \cite{arens2013analysing} and references therein.

Consider the following function:
\begin{align}
    \label{eq:ewald}
    G^\mathrm{E}_k(x) = {}&
    \frac{1}{4\pi}\sum_{n=-\infty}^\infty \mathrm{e}^{\mathrm in\beta} \sum_{j=0}^\infty \frac{1}{j!}\left(\frac{k}{2\zeta}\right)^{2j} E_{j+1}(\zeta^2 ((x_1-nL)^2 + x_2^2))
    \\
    &+ \frac{\mathrm i}{4L} \sum_{s\in\mathbb Z\setminus\sigma_\ast(k)} \frac{\mathrm{e}^{\mathrm ik_s x_1} }{\sqrt{k^2 - k_s^2}}
    \biggl[
    \mathrm{e}^{-\mathrm i \sqrt{k^2 - k_s^2} x_2} \mathrm{erfc}\, \left( \zeta x_2 - \frac{\mathrm i\sqrt{k^2 - k_s^2}}{2\zeta} \right)
    \notag
    \\
    &+
        \mathrm{e}^{+\mathrm i \sqrt{k^2 - k_s^2} x_2} \mathrm{erfc}\, \left(-\zeta x_2 - \frac{\mathrm i\sqrt{k^2 - k_s^2}}{2\zeta} \right)
    \Bigg]
    \notag
    \\
    &- \frac{1}{2L}\sum_{s\in\sigma_\ast(k)} \mathrm e^{\mathrm i k_s x_1}
    \left(
        x_2 \mathrm{erf}(\zeta x_2)
        +
        \frac{\mathrm e^{-\zeta^2 x_2^2}}{\sqrt\pi\,\zeta}
    \right)
\notag
\end{align}

The next proposition shows that $G^\mathrm{E}_k$ coincides with $G_k$ away from the lattice $\Theta$.

\begin{proposition}\label{prop:ident-G-GE}
    Let $k\in\mathbb R$ and let $\zeta>0$. Then $G_k(x) = G^\mathrm{E}_k(x)$ for all $x\in\mathbb R^2\setminus\Theta$.
\end{proposition}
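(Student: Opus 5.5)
We split the Ewald kernel into a spatial and a spectral part, apply the Poisson summation formula to the spatial part, and add the spectral part term by term. The identity to use is the Ewald splitting of the free-space kernel. For $r>0$,
\begin{align*}
\frac{\mathrm i}{4}H^{(1)}_0(kr) = \frac{1}{4\pi}\int_{\gamma_0}^{\infty}\ldots + \frac{1}{4\pi}\int_{1/(2\zeta)}^{\infty} \frac{1}{t}\exp\!\left(-\frac{r^2}{4t^2}+\frac{k^2t^2}{4}\right)\mathrm dt ,
\end{align*}
written in the form common in the literature (cf. \cite{arens2013analysing}). The part with integration variable near the origin, after the substitution $u=\zeta^2 r^2 \cdot(\ldots)$ and expansion of $\mathrm e^{k^2/(4\zeta^2 u)}$ in powers of $k$, gives exactly $\frac{1}{4\pi}\sum_j \frac{1}{j!}(k/2\zeta)^{2j}E_{j+1}(\zeta^2 r^2)$. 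This is the spatial part; it decays like a Gaussian in $r$, so the lattice sum over $n$ in \cref{eq:ewald} converges absolutely and locally uniformly on $\mathbb R^2\setminus\Theta$.

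For the spectral part we prefer to avoid the contour-integral bookkeeping and to argue at the level of distributions, mirroring the proof of \cref{prop:green-modified}. Define the quasi-periodic function $F(x):=\frac{1}{4\pi}\sum_n \mathrm e^{\mathrm in\beta}\,\phi(x-nLe_1)$, where $\phi(x)=\sum_j\frac{1}{j!}(k/2\zeta)^{2j}E_{j+1}(\zeta^2|x|^2)$. A direct computation shows
\begin{align*}
(-\varDelta-k^2)\tfrac{1}{4\pi}\phi = \delta_0 - f,
\end{align*}
where $f$ is an explicit smooth radial Gaussian, $f(x)=\frac{\zeta^2}{\pi}\mathrm e^{-\zeta^2|x|^2+k^2/(4\zeta^2)}$. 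We will verify this by checking it for the generating function $\int_1^\infty t^{-1}\mathrm e^{-\zeta^2|x|^2 t + k^2/(4\zeta^2 t)}\mathrm dt$, differentiating under the integral and integrating by parts in $t$; the boundary term at $t=1$ produces $f$, and the $\delta$ comes from the logarithmic behaviour at $0$.

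Hence $(-\varDelta-k^2)F=F_\#-\sum_n \mathrm e^{\mathrm in\beta}f(\cdot-nLe_1)$. Poisson summation in $x_1$ rewrites the last sum as $\frac{1}{L}\sum_s \mathrm e^{\mathrm ik_sx_1}\hat f_s(x_2)$, with $\hat f_s(x_2)=\frac{\zeta}{\sqrt\pi}\mathrm e^{-\zeta^2x_2^2}\mathrm e^{(k^2-k_s^2)/(4\zeta^2)}$. We then check mode by mode:
\begin{enumerate}
\item For each $s\notin\sigma_\ast(k)$, with $a=\sqrt{k^2-k_s^2}$, the $s$-th erfc term $w_s(x_2)$ of \cref{eq:ewald} solves $(-\partial_2^2-a^2)w_s=\hat f_s/L$. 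It also tends to $\frac{\mathrm i}{2La}\mathrm e^{\mathrm ia|x_2|}$ as $|x_2|\to\infty$, since $\operatorname{erfc}\to 0$ or $2$. This is a one-dimensional ODE verification using $\frac{\mathrm d}{\mathrm dz}\operatorname{erfc}=-\frac{2}{\sqrt\pi}\mathrm e^{-z^2}$.
\item For $s\in\sigma_\ast(k)$ the erf term satisfies $-\partial_2^2 w_s=\hat f_s/L$ with $\hat f_s=\frac{\zeta}{\sqrt\pi}\mathrm e^{-\zeta^2x_2^2}$, and $w_s\sim-\frac{1}{2L}|x_2|$ at infinity.
\end{enumerate}
Thus $G^{\mathrm E}_k$ satisfies $(-\varDelta-k^2)G^{\mathrm E}_k=F_\#$ in $\mathcal D'(\mathbb R^2)$, is quasi-periodic, and agrees with $G_k$ in the sense that their difference $D:=G^{\mathrm E}_k-G_k$ solves the homogeneous Helmholtz equation on all of $\mathbb R^2$ with no singularities, by \cref{lemma:green-regularity}.

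We conclude $D=0$ by Fourier analysis in $x_1$. Because $F$ decays like a Gaussian in $x_2$, the $s$-th coefficient of $D$ is a solution of $(-\partial_2^2-(k^2-k_s^2))d_s=0$ on $\mathbb R$. By construction of the mode asymptotics, $d_s$ tends to zero (for evanescent $s$) or has vanishing coefficients of both growing modes and of both $\mathrm e^{\pm\mathrm ia x_2}$ (resp. $1,x_2$) as $x_2\to\pm\infty$. A one-dimensional ODE solution on $\mathbb R$ with these properties must be zero, so $d_s=0$. Note that $d_s$ is defined on all of $\mathbb R$ as an $L^1_{\rm loc}$ Fourier coefficient; we use that it is a classical solution there.

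The main obstacle is the interchange of summations. This covers Poisson summation of the Gaussian-type lattice sum (with $E_{j+1}$ summed over $j$) and the termwise application of $-\varDelta-k^2$ to the spectral series, where the spectral series only converges conditionally at $x_2=0$ by \cref{lemma:Gk-unif-conv}. We handle the spectral series by working in $\mathcal D'$: the factor $\operatorname{erfc}$ supplies decay $\mathrm e^{-|k_s|^2/(4\zeta^2)}$-type damping near $x_2=0$, giving absolute convergence. We justify the spatial sum by dominated convergence using $E_{j+1}(z)\le \mathrm e^{-z}/z$.
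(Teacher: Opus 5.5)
Your proof is correct, but it follows a different route from the paper's.

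\textbf{The paper's route.} The paper argues pointwise. For $x_2\neq0$ it shows that $\theta\mapsto\mathrm e^{\mathrm i\theta\beta}\phi(x-\theta Le_1)$ is a Schwartz function and applies Poisson summation to the whole real-space sum. It then evaluates the resulting integrals $\int_1^\infty t^{-3/2}\exp(-\zeta^2x_2^2t+(k^2-k_s^2)/(4\zeta^2t))\,\mathrm dt$ in closed form. Each equals minus the erfc (resp.\ erf) term plus the corresponding Rayleigh mode of $G_k$, so the reciprocal-space part of $G^{\mathrm E}_k$ cancels and exactly the series \cref{eq:green-all} remains. The line $x_2=0$ is then recovered from continuity of both sides on $\mathbb R^2\setminus\Theta$.

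\textbf{Your route.} You instead show that $G^{\mathrm E}_k$ is a quasi-periodic Green function and identify it with $G_k$ by a uniqueness argument. Your key computations check out:
\begin{itemize}
\item With $a=k^2/(4\zeta^2)$, one has $(-\varDelta-k^2)[t^{-1}\mathrm e^{-\zeta^2|x|^2t+a/t}]=4\zeta^2\partial_t[t\,\mathrm e^{-\zeta^2|x|^2t+a/t}]$. This gives $(-\varDelta-k^2)\frac{1}{4\pi}\phi=\delta_0-f$ with your $f$. In $\mathcal D'(\mathbb R^2)$ the $\delta_0$ arises as the boundary term at $t=\infty$, so do the $t$-integration last; swapping it with the $x$-integral would lose the delta.
\item The Gaussian Poisson sum yields your $\hat f_s$.
\item The erfc and erf modes satisfy $(-\partial_2^2-(k^2-k_s^2))w_s=\hat f_s/L$.
\item $w_s$ minus the $s$-th Rayleigh mode of $G_k$ decays like a Gaussian as $|x_2|\to\infty$. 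Together with the Gaussian decay of the real-space sum, this forces every Fourier coefficient of the entire solution $D$ to vanish.
\end{itemize}

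\textbf{Comparison.} Your route needs Poisson summation only for an explicit Gaussian, so there is no Schwartz argument and no restriction to $x_2\neq0$. It replaces integral evaluations by differentiations and explains why the erfc/erf terms take the form they do. The paper's route gives the identity directly, without a uniqueness step or distributional bookkeeping.

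\textbf{Two small points.} First, the Hankel-splitting paragraph at the start is never used and can be dropped. Second, $D=0$ a priori gives $G^{\mathrm E}_k=G_k$ only almost everywhere. As in step (ii) of the paper, you should state explicitly that both $G^{\mathrm E}_k$ and the Rayleigh series (\cref{lemma:Gk-unif-conv}) are continuous on $\mathbb R^2\setminus\Theta$. Your locally uniform convergence estimates already supply this.
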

\begin{proof}
    (i) We first show that the asserted equality holds for all $x\in\mathbb R^2$ with $x_2\neq 0$. To this end, we claim that
    \begin{align*}
        \theta\longmapsto f(\theta) := \frac{1}{4\pi}\mathrm{e}^{\mathrm i\theta\beta} \sum_{j=0}^\infty \frac{1}{j!}\left(\frac{k}{2\zeta}\right)^{2j} E_{j+1}(\zeta^2 ((x_1-\theta L)^2 + x_2^2))
    \end{align*}
    is a Schwartz function on $\mathbb R$, i.e., 
    $f\in C^\infty(\mathbb R;\mathbb C)$ and $\sup_{\theta\in\mathbb R} |\theta^n f^{(m)}(\theta)| < +\infty$ for all $n,m\in\mathbb N\cup \{0\}$, 
    for each $x\in\mathbb R^2$ with $x_2\neq 0$. In fact, Tonelli's theorem yields
    \begin{align*}
        &\quad\sum_{j=0}^\infty \frac{1}{j!}\left(\frac{k}{2\zeta}\right)^{2j} E_{j+1}(\zeta^2 ((x_1-\theta L)^2 + x_2^2)) 
        = \int_1^\infty t^{-1}\mathrm e^{-\zeta^2 ((x_1-\theta L)^2 + x_2^2)t+k^2/(4\zeta^2 t)}\,\mathrm dt
        \\
        &\le \mathrm e^{k^2/(4\zeta^2)} \int_1^\infty \mathrm e^{-\zeta^2 ((x_1-\theta L)^2 + x_2^2)t} \mathrm dt < +\infty \quad\text{for all }\theta\in\mathbb R,\ x\in\mathbb R^2\text{ with }x_2\neq 0.
    \end{align*}
    By the dominated convergence theorem, we may differentiate $f$ under the integral sign and obtain the estimate
    \begin{align*}
        |\theta|^n|f^{(m)}(\theta)|
        \le
        C_{m,n}\int_1^\infty t^{N_{m,n}-1}\mathrm e^{-\zeta^2x_2^2 t}\,\mathrm dt < +\infty \quad\text{for all }\theta\in\mathbb R \text{ and }m,n\in\mathbb N\cup \{0\}
    \end{align*}
    where $C_{m,n}>0$ and $N_{m,n}\in\mathbb N$ are constants. Thus, by the Poisson summation formula, it follows that
    \begin{align*}
        &\quad \frac{1}{4\pi}\sum_{n=-\infty}^\infty \mathrm{e}^{\mathrm in\beta} \sum_{j=0}^\infty \frac{1}{j!}\left(\frac{k}{2\zeta}\right)^{2j} E_{j+1}(\zeta^2 ((x_1-nL)^2 + x_2^2))
        \\
        &=
        \sum_{s=-\infty}^\infty
        \frac{\mathrm e^{\mathrm ik_sx_1}}{4\sqrt\pi\,L\zeta}
        \int_1^\infty
        t^{-3/2}
        \exp\!\left(
            -\zeta^2x_2^2 t
            +
            \frac{k^2-k_s^2}{4\zeta^2 t}
        \right)\,\mathrm dt \quad\text{for all }x\in\mathbb R^2\text{ with }x_2\neq 0.
    \end{align*}
    We split the series over $s$ into two parts: $s\in\mathbb Z\setminus\sigma_\ast(k)$ and $s\in\sigma_\ast(k)$. Straightforward calculations show that
    \begin{align*}
        &\sum_{s\in\mathbb Z\setminus\sigma_\ast(k)}
        \frac{\mathrm e^{\mathrm ik_sx_1}}{4\sqrt\pi\,L\zeta}
        \int_1^\infty
        t^{-3/2}
        \exp\!\left(
            -\zeta^2x_2^2 t
            +
            \frac{k^2-k_s^2}{4\zeta^2 t}
        \right)\,\mathrm dt
        \\
        &=
        -
        \frac{\mathrm i}{4L} \sum_{s\in\mathbb Z\setminus\sigma_\ast(k)} \frac{\mathrm{e}^{\mathrm ik_s x_1} }{\sqrt{k^2 - k_s^2}}
        \biggl[
        \mathrm{e}^{-\mathrm i \sqrt{k^2 - k_s^2} x_2} \mathrm{erfc}\, \left( \zeta x_2 - \frac{\mathrm i\sqrt{k^2 - k_s^2}}{2\zeta} \right)
        \\
        &+
        \mathrm{e}^{+\mathrm i \sqrt{k^2 - k_s^2} x_2} \mathrm{erfc}\, \left(-\zeta x_2 - \frac{\mathrm i\sqrt{k^2 - k_s^2}}{2\zeta} \right)
        \biggr] + \frac{\mathrm i}{2L}\sum_{s\in\mathbb Z\setminus\sigma_\ast(k)}
        \frac{\mathrm e^{\mathrm ik_sx_1+\mathrm i\sqrt{k^2-k_s^2} x_2}}{\sqrt{k^2-k_s^2}}
    \end{align*}
    and
    \begin{align*}
        &\sum_{s\in\sigma_\ast(k)}
        \frac{\mathrm e^{\mathrm ik_sx_1}}{4\sqrt\pi\,L\zeta}
        \int_1^\infty
        t^{-3/2}
        \exp\!\left(
            -\zeta^2x_2^2 t
            +
            \frac{k^2-k_s^2}{4\zeta^2 t}
        \right)\,\mathrm dt
        \\
        &= \frac{1}{2L}
        \sum_{s\in\sigma_\ast(k)}
        \mathrm e^{\mathrm ik_sx_1}
        \left(
            \frac{\mathrm e^{-\zeta^2x_2^2}}{\sqrt\pi\,\zeta}
            +
            x_2\operatorname{erf}(\zeta x_2)
            - x_2
        \right)
    \end{align*}
    for all $x\in\mathbb R^2$ with $x_2>0$. Thus $G_k(x) = G^\mathrm{E}_k(x)$ for all $x\in\mathbb R^2$ with $x_2>0$. Analogously, we can show that the equality holds for $x\in\mathbb R^2$ with $x_2<0$.

    (ii) We next assert that $G^\mathrm{E}_k$ is continuous in $\mathbb R^2\setminus\Theta$. For every compact subset $V$ of $\mathbb R^2\setminus\Theta$, there exist constants $c,C>0$ such that
    \begin{align*}
        &\quad\sum_{j=0}^\infty \frac{1}{j!}\left(\frac{k}{2\zeta}\right)^{2j} E_{j+1}(\zeta^2 ((x_1-nL)^2 + x_2^2)) 
        \\
        &\le \mathrm e^{k^2/(4\zeta^2)} \int_1^\infty \mathrm e^{-\zeta^2 ((x_1-n L)^2 + x_2^2)t} \mathrm dt 
        \le C e^{-c n^2}
        \quad\text{for all }n\in\mathbb Z \text{ and } x\in V,
    \end{align*}
    where $d:= \min_{x\in V}\mathrm{dist}(x,\Theta) > 0$. Hence the first series in \cref{eq:ewald} converges absolutely and uniformly for $x$ in compact subsets and thus defines a continuous function on $\mathbb R^2\setminus\Theta$. 
    Analogously, due to the exponential decay of $\operatorname{erfc}(x) \sim \mathrm e^{-x^2}/x$ for large $x>0$, the second series in \cref{eq:ewald} defines a continuous function as well. Therefore $G^\mathrm{E}_k$ is continuous on $\mathbb R^2\setminus\Theta$.

    From (i), (ii), and \cref{lemma:Gk-unif-conv}, both functions $G_k$ and $G^\mathrm{E}_k$ are continuous in $\mathbb R^2\setminus\Theta$ and coincide on a dense subset of $\mathbb R^2\setminus\Theta$. This completes the proof.
\end{proof}

The Ewald representation of $G_k$ is particularly useful for extracting its logarithmic singularity at $\Theta$. Define
\begin{align*}
    G^S_k(x) &:=  \frac{1}{4\pi}\sum_{n=-\infty}^\infty \mathrm{e}^{\mathrm in\beta} E_{1}(\zeta^2 ((x_1-nL)^2 + x_2^2)) J_0(k|x-nLe_1|) \quad\text{for }x\in\mathbb R^2\setminus\Theta.
\end{align*}
We shall show that $G^S_k$ contains all the logarithmic singularities at $\Theta$.
\begin{proposition}\label{prop:ewald-split}
    Let $k\in\mathbb R$ and $\zeta>0$.
    Then the following assertions hold:
    \begin{enumerate}\renewcommand{\theenumi}{\roman{enumi}}
        \item $G_k - G^S_k$ extends to a real-analytic function in $\mathbb R^2$,
        \item for a given finite subset $\sigma\subset\mathbb Z$, the function $(k,x)\mapsto D_x^\mu (\hat G_k(x) - G^S_k(x))$ is continuous on $((\mathbb R\setminus\Lambda)\cup\Lambda_\sigma)\times \mathbb R^2$ for every multi-index $\mu$.
    \end{enumerate}
\end{proposition}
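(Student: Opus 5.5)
We work directly from the Ewald representation: by \cref{prop:ident-G-GE}, $G_k = G^\mathrm{E}_k$ on $\mathbb R^2\setminus\Theta$, so it suffices to compare $G^\mathrm{E}_k$ with $G^S_k$. Using $E_{j+1}(z)$ and Tonelli as in the proof of \cref{prop:ident-G-GE}, write the first (spatial) series of \cref{eq:ewald} as
\[
\frac{1}{4\pi}\sum_{n}\mathrm e^{\mathrm in\beta}\int_1^\infty t^{-1}\mathrm e^{-\zeta^2 r_n^2 t + k^2/(4\zeta^2 t)}\,\mathrm dt,\qquad r_n := |x-nLe_1|.
\]
The key one-dimensional identity is that, for each $n$, the difference between this term and $\frac1{4\pi}E_1(\zeta^2r_n^2)J_0(kr_n)$ is an entire function of $(k, r_n^2)$. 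To see this, expand $J_0(kr) = \sum_j (-1)^j (kr/2)^{2j}/(j!)^2$ and $\mathrm e^{k^2/(4\zeta^2t)} = \sum_j (k/2\zeta)^{2j}t^{-j}/j!$, compare coefficients of $k^{2j}$, and use the recurrence $jE_{j+1}(z) = \mathrm e^{-z} - zE_j(z)$. Iterating this recurrence gives
\[
\frac{1}{j!}E_{j+1}(z) - \frac{(-z)^j}{(j!)^2}E_1(z) = \frac{\mathrm e^{-z}}{(j!)^2}\sum_{q=0}^{j-1}(j-1-q)!(-z)^q ,
\]
which is exactly the finite correction in the definition of $H_{k,m}$. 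Thus the $n$-th term minus its $G^S_k$ counterpart equals $\frac{1}{4\pi}\mathrm e^{\mathrm in\beta}\mathrm e^{-\zeta^2r_n^2}\sum_{j\ge1}\frac{1}{(j!)^2}\left(\frac{k}{2\zeta}\right)^{2j}\sum_{q=0}^{j-1}(j-1-q)!(-\zeta^2r_n^2)^q$. This is a polynomial-type series in $x$ and $k$ that converges locally uniformly together with all derivatives, and it is real-analytic in $x\in\mathbb R^2$, with no singularity at $nLe_1$ since it depends only on $r_n^2$.

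Next we sum over $n$. The Gaussian factor $\mathrm e^{-\zeta^2r_n^2}$, multiplied by the (at most $\mathrm e^{C r_n^2\cdot\text{small}}$-type) growth of the inner double sum, must still decay. We bound the inner sum by $\sum_j \frac{(k/2\zeta)^{2j}}{j!}\cdot\frac{(1+\zeta^2 r_n^2)^{j}}{j!}$, or more directly by returning to the integral form: the difference equals $\frac1{4\pi}\int_1^\infty t^{-1}\mathrm e^{-\zeta^2r^2t}(\mathrm e^{k^2/(4\zeta^2t)}-\ldots)\,\mathrm dt$. The estimate we aim for is $|D_x^\mu(\cdot)|\le C\mathrm e^{-c n^2}$ uniformly for $x$ in compacts and $k$ in compacts. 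This gives locally uniform convergence of the lattice sum of real-analytic functions with all derivatives, and hence analyticity via uniform convergence of holomorphic extensions in a complex neighbourhood. This estimate is the main technical obstacle. We would handle it by extending $x$ to a small complex neighbourhood, where $\mathrm{Re}\,r_n^2 \ge r_n^2/2$ for large $|n|$. The remaining parts of \cref{eq:ewald}, namely the erfc series and the $\sigma_\ast(k)$ terms, are entire in $x$. The erfc series converges with super-exponential decay in $s$ because of the Gaussian decay of $\operatorname{erfc}$ at complex arguments with large imaginary part $|k_s|/(2\zeta)$, locally uniformly in complex $x$. This proves (i).

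For (ii), we write
\[
\hat G_k - G^S_k = (G^\mathrm{E}_k - G^S_k) - \frac{\mathrm i}{2L}\sum_{s\in\sigma\setminus\sigma_\ast(k)}\frac{\mathrm e^{\mathrm ik_sx_1}}{\sqrt{k^2-k_s^2}} .
\]
Then we regroup, so that the erfc terms for $s\in\sigma$, together with the $\sigma_\ast$ terms and the subtraction, become $\sum_{s\in\sigma}\mathrm e^{\mathrm ik_sx_1}\Psi(\sqrt{k^2-k_s^2},x_2)$, exactly as in $W_{k,m}$. We then verify that $\Psi(\alpha,x_2)$ is jointly continuous, indeed analytic in $\alpha$, with all $x_2$-derivatives. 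The numerator is an entire even-type function of $\alpha$ vanishing at $\alpha=0$, and differentiating at $\alpha=0$ reproduces the stated value $-\frac1{2L}(x_2\operatorname{erf}(\zeta x_2)+\mathrm e^{-\zeta^2x_2^2}/(\sqrt\pi\zeta))$. Since $k\mapsto\sqrt{k^2-k_s^2}$ is continuous, this yields continuity on $((\mathbb R\setminus\Lambda)\cup\Lambda_\sigma)\times\mathbb R^2$. For $s\notin\sigma$, every Wood anomaly in the domain has $\sigma_\ast(k)\subset\sigma$, so $k^2-k_s^2\neq0$ locally. Each such term is then continuous in $(k,x)$ with all $x$-derivatives, and the uniform tail bounds from the erfc decay, together with the lattice-sum bounds above (continuous in $k$ by dominated convergence), give continuity of $D_x^\mu(\hat G_k - G^S_k)$.
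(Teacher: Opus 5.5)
Your route is the paper's. Your iterated recurrence identity is exactly how the paper rewrites each lattice term of $\Xi_k-G^S_k$ as $\frac{1}{4\pi}\mathrm e^{\mathrm in\beta}\mathrm e^{-z_n}\mathcal H_k(z_n)$, with $z_n=\zeta^2((x_1-nL)^2+x_2^2)$. The lattice sum then goes by the Weierstrass test on complex neighbourhoods, the erfc series by Gaussian decay in $s$, and (ii) by regrouping the $s\in\sigma$ terms into $\Psi$.

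\textbf{The summability estimate.} The step you call the main obstacle does not go through as written. Set $a:=k^2/(4\zeta^2)$. Your majorant $\sum_j\frac{a^j}{j!}\frac{(1+|z|)^j}{j!}$ is not valid: already the $q=0$ part gives $(j-1)!/(j!)^2=1/(j\,j!)$, which is not $O((1+|z|)^j/(j!)^2)$. The natural valid bound, from $(j-1-q)!/j!\le 1/(q+1)!$, is $|\mathcal H_k(z)|\le(\mathrm e^{a}-1)\mathrm e^{|z|}/|z|$. This has exponential type $1$, so pairing it with your inequality $\mathrm{Re}\,z_n\ge|z_n|/2$ leaves an uncontrolled factor $\mathrm e^{|z_n|/2}$.

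What is missing is the sharper geometric fact the paper uses. On a bounded complex neighbourhood of a real point, $\mathrm{Im}\,z_n=O(|n|)$ while $\mathrm{Re}\,z_n\ge cn^2$ for large $|n|$. Hence $|z_n|-\mathrm{Re}\,z_n=(\mathrm{Im}\,z_n)^2/(|z_n|+\mathrm{Re}\,z_n)\le C$. The type-$1$ bound then gives summands of size $O(n^{-2})$, uniformly in $x$ and in $k\in K$. That suffices for the Weierstrass test and, by Cauchy estimates, for all $x$-derivatives; the $\mathrm e^{-cn^2}$ decay you aim for is not needed.

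If you want that decay anyway, make your integral-form remark precise. For $\mathrm{Re}\,z>0$, analytic continuation gives $\mathrm e^{-z}\mathcal H_k(z)=\int_1^\infty t^{-1}\mathrm e^{-zt}(\mathrm e^{a/t}-J_0(2\sqrt{az}))\,\mathrm dt$. Then $|J_0(w)|\le\mathrm e^{|w|}$ yields $|\mathrm e^{-z}\mathcal H_k(z)|\le(\mathrm e^{a}+\mathrm e^{2\sqrt{a|z|}})\,\mathrm e^{-\mathrm{Re}\,z}/\mathrm{Re}\,z$.

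\textbf{The erfc series.} The decay does not come from a large imaginary part of the argument; $\operatorname{erfc}(\mathrm iy)$ grows like $\mathrm e^{y^2}$. For $|k_s|>|k|$ one has $\sqrt{k^2-k_s^2}=\mathrm i\mu_s$ with $\mu_s=\sqrt{k_s^2-k^2}$. The arguments are therefore $\pm\zeta x_2+\mu_s/(2\zeta)$, which have large positive real part. The bound $|\operatorname{erfc}(w)|\le C|\mathrm e^{-w^2}|/|w|$ for $\mathrm{Re}\,w>0$, combined with the prefactors $\mathrm e^{\pm\mu_s x_2}$, gives $O(\mathrm e^{-\mu_s^2/(4\zeta^2)})$. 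This beats the growth $\mathrm e^{C|s|}$ of $\mathrm e^{\mathrm ik_sx_1}$ for complex $x_1$.

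With these two corrections, the rest of your argument, including the $\Psi$-regrouping and the continuity claim in (ii), matches the paper's proof.
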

\begin{proof}
    We first split $G_k - G^S_k = (\Xi_k - G^S_k) + (G_k - \Xi_k)$, where $\Xi_k$ is defined by
    \begin{align*}
        \Xi_k(x) := \frac{1}{4\pi}\sum_{n=-\infty}^\infty \mathrm{e}^{\mathrm in\beta} \sum_{j=0}^\infty \frac{1}{j!}\left(\frac{k}{2\zeta}\right)^{2j} E_{j+1}(\zeta^2 ((x_1-nL)^2 + x_2^2)) \quad\text{for }x\in\mathbb R^2\setminus\Theta.
    \end{align*}
    We shall show that $\Xi_k - G^S_k$ and $G_k - \Xi_k$ are real-analytic in $\mathbb R^2$.

    \noindent (1) $\Xi_k - G^S_k$

    In analogy with the proof of \cref{prop:ident-G-GE}, we see that the defining series of $G^S_k$ and $\Xi_k$ converge absolutely for each $x\in\mathbb R^2\setminus\Theta$. Thus, using the recursion formula
    \begin{align*}
        E_{j+2}(z)
            &=
            \frac{1}{j+1}\bigl(\mathrm e^{-z}-zE_{j+1}(z)\bigr) \quad\text{for all }j=0,1,\ldots \text{ and }z\in\mathbb C \text{ with }\operatorname{Re}[z]>0,
    \end{align*}
    we obtain
    \begin{align}
    \label{eq:Xi-GS}
        \Xi_k(x) - G^S_k(x) &= \frac{1}{4\pi}
        \sum_{n=-\infty}^\infty
        \mathrm e^{\mathrm i n\beta}
        \mathrm e^{-\zeta^2((x_1-nL)^2+x_2^2)}
        \mathcal H_k(\zeta^2((x_1-nL)^2+x_2^2))
    \end{align}
    for all $x\in\mathbb R^2\setminus\Theta$, where $\mathcal H_k:\mathbb C\to\mathbb C$ is an entire function defined by
    \begin{align*}
        \mathcal H_k(z)
        :=
        \sum_{j=1}^\infty \frac{1}{(j!)^2}
        \left(\frac{k}{2\zeta}\right)^{2j}\sum_{\ell=0}^{j-1}(j-1-\ell)!(-z)^\ell.
    \end{align*}
    The assertion follows from the analyticity of each summand and the Weierstrass test. Indeed, we have
    \begin{align*}
        |\mathrm e^{-z}\mathcal H_k(z)| \le (\mathrm e^{k^2/(4\zeta^2)}-1)\frac{\mathrm e^{|z|-\operatorname{Re}z}}{|z|} \quad\text{for all }z\in\mathbb C\setminus\{0\},
    \end{align*}
    and for each compact neighborhood $V\subset \mathbb C^2$ of a point in $\mathbb R^2$, there exists $C>0$ such that
    \begin{align*}
        |\zeta^2((x_1-nL)^2+x_2^2)|-\operatorname{Re}\left[\zeta^2((x_1-nL)^2+x_2^2)\right] 
        \le C \quad\text{for all }x\in V \text{ and }n\in\mathbb Z,
    \end{align*}
    which implies that the summand in \cref{eq:Xi-GS} is $O(n^{-2})$ uniformly in $V$. Thus $\Xi_k-G^S_k$ is real-analytic in $\mathbb R^2$.

    \noindent (2) $G_k-\Xi_k$

    It suffices to show that the series
    \begin{align*}
        \frac{\mathrm i}{4L} \sum_{s\in\mathbb Z\setminus\sigma_\ast(k)} \frac{\mathrm{e}^{\mathrm ik_s x_1} }{\sqrt{k^2 - k_s^2}}
    \biggl[&
    \mathrm{e}^{-\mathrm i \sqrt{k^2 - k_s^2} x_2} \mathrm{erfc}\, \left( \zeta x_2 - \frac{\mathrm i\sqrt{k^2 - k_s^2}}{2\zeta} \right)
    \notag
    \\
    &+
        \mathrm{e}^{+\mathrm i \sqrt{k^2 - k_s^2} x_2} \mathrm{erfc}\, \left(-\zeta x_2 - \frac{\mathrm i\sqrt{k^2 - k_s^2}}{2\zeta} \right)
    \Bigg]
    \end{align*}
    defines a real-analytic function in $\mathbb R^2$. In analogy with (1), using the estimate $|\operatorname{erfc}(w)|\le C|\mathrm e^{-w^2}|/|w|$ for $w\in\mathbb C$ with positive real part, it is straightforward to check that the summand is $O(\mathrm e^{-cs^2})$ uniformly on each compact neighborhood. Thus $G_k-\Xi_k$ is real-analytic in $\mathbb R^2$. 

    We subsequently prove (ii). Define $\mathcal D_\sigma:=(\mathbb R\setminus\Lambda)\cup\Lambda_\sigma$. Using the Ewald representation, we have 
    \begin{align*}
        \hat G_k(x) - G^S_k(x) &= (\Xi_k(x) - G^S_k(x)) + \sum_{s\in\sigma} \mathrm e^{\mathrm ik_s x_1}\Psi(\sqrt{k^2-k_s^2},x_2)
        \\
        &+ \frac{\mathrm i}{4L} \sum_{s\in\mathbb Z\setminus\sigma} \frac{\mathrm{e}^{\mathrm ik_s x_1} }{\sqrt{k^2 - k_s^2}}
    \Biggl[
    \mathrm{e}^{-\mathrm i \sqrt{k^2 - k_s^2} x_2} \mathrm{erfc}\, \left( \zeta x_2 - \frac{\mathrm i\sqrt{k^2 - k_s^2}}{2\zeta} \right)
    \notag
    \\
    &+
        \mathrm{e}^{+\mathrm i \sqrt{k^2 - k_s^2} x_2} \mathrm{erfc}\, \left(-\zeta x_2 - \frac{\mathrm i\sqrt{k^2 - k_s^2}}{2\zeta} \right)
    \Biggr] \quad\text{in }\mathcal D_\sigma\times(\mathbb R^2\setminus\Theta).
    \end{align*}
    
    First, the function
    \[
     F(\kappa,z)
     :=
     e^{-i\kappa z}
     \operatorname{erfc}
     \left(\zeta z-\frac{i\kappa}{2\zeta}\right)
     +
     e^{i\kappa z}
     \operatorname{erfc}
     \left(-\zeta z-\frac{i\kappa}{2\zeta}\right)
     -2
    \]
    is entire and satisfies $F(0,z)=0$. Hence
    $iF(\kappa,z)/(4L\kappa)$ has an entire extension at
    $\kappa=0$, and this extension is precisely $\Psi$.
    Consequently, the finite sum over $s\in\sigma$, together with all
    its $x$-derivatives, is continuous in $(k,x)$.
    
    Next, let $K\subset\mathcal D_\sigma$ be compact and let
    $X\Subset\mathbb C^2$. In analogy with the proof of (i), we can check that the series defining
    $\Xi_k-G_k^S$ and its $x$-derivatives converge absolutely and uniformly on $K\times X$. For the remaining series, the standard large-argument estimate for $\operatorname{erfc}$, applied uniformly on $X$, implies that for every multi-index $\mu$ there exist $C_\mu,c_\mu>0$ such that $\sup_{(k,x)\in K\times X}
     \left|D_x^\mu Q_s(k,x)\right|
     \le C_\mu e^{-c_\mu s^2}$,
    where $Q_s$ denotes the $s$th summand of the last series. Thus this series and all its $x$-derivative series converge absolutely and uniformly on $K\times X$.
    
    It follows that, for every multi-index $\mu$, the function $(k,x)\mapsto
     D_x^\mu(\hat G_k-G_k^S)(x)$ extends continuously to $\mathcal D_\sigma\times\mathbb R^2$.
\end{proof}

\section{Layer potentials and boundary integral operators}
Using the quasi-periodic Green function $G_k$, which is identical to $G^\mathrm{E}_k$ by \cref{prop:ident-G-GE}, we seek a solution to $(P_k)$ in the form of layer potentials.

Let $\eta>0$. A straightforward strategy is to find a function $\varphi\in C_\beta(\Gamma_\mathrm{per})$ such that 
\begin{align}
\label{eq:layerpotential-naive}
    u_k(x) := \int_\Gamma \left(\frac{\partial G_k}{\partial\nu(y)}(x-y) -\mathrm i\eta G_k(x-y) \right) \varphi(y) \mathrm ds(y)
\end{align}
solves $(P_k)$. While the quasi-periodic Green function $G_k$ is defined for all $k\in\mathbb R$, we cannot expect the combined layer potential \cref{eq:layerpotential-naive} to satisfy the radiation condition due to the linearly growing term in \cref{eq:green-all} when $k\in\Lambda$. 

We overcome this difficulty by using the observation from \cref{prop:Gk-cts}. By the definition of $\hat G_k$, it follows that
\begin{align*}
    u_k(x) &= \int_\Gamma \left(\frac{\partial G_k}{\partial\nu(y)}(x-y) -\mathrm i\eta G_k(x-y) \right) \varphi(y) \mathrm ds(y)
    \\
    &= \int_\Gamma \left(\frac{\partial \hat G_k}{\partial\nu(y)}(x-y) -\mathrm i\eta \hat G_k(x-y) \right) \varphi(y) \mathrm ds(y) 
    \\
    &\qquad + \frac{\mathrm i}{2L}\sum_{s\in\sigma\setminus\sigma_\ast(k)} \frac{\mathrm e^{\mathrm ik_s x_1}}{\sqrt{k^2-k_s^2}} \int_\Gamma \left( \frac{\partial}{\partial \nu(y)} - \mathrm i\eta \right)  \mathrm e^{-\mathrm i k_s y_1} \varphi(y)\mathrm ds(y)
\end{align*}
for all $k\in\mathbb R\setminus\Lambda$ and $x\in\Omega_+$. In view of the continuity of $k\mapsto \hat G_k$ at Wood anomaly frequencies (\cref{prop:Gk-cts}), we put
\begin{align*}
    c_s := 
    \begin{cases}
    \displaystyle
        \frac{\mathrm i}{L} \frac{1}{\sqrt{k^2-k_s^2}} \int_\Gamma \left( \frac{\partial}{\partial \nu(y)} - \mathrm i\eta \right)  \mathrm e^{-\mathrm i k_s y_1} \varphi(y)\mathrm ds(y) &\text{if }s\in\sigma\setminus\sigma_\ast(k),
        \\
        0 &\text{otherwise},
    \end{cases}
\end{align*}
so that
\begin{align*}
    u_k(x) = \int_\Gamma \left(\frac{\partial \hat G_k}{\partial\nu(y)}(x-y) -\mathrm i\eta \hat G_k(x-y) \right) \varphi(y) \mathrm ds(y) + \frac{1}{2}\sum_{s\in\sigma} c_s \mathrm e^{\mathrm ik_s x_1}
\end{align*}
for all $k\in\mathbb R\setminus\Lambda$ and $x\in\Omega_+$. We first show that this layer potential can represent a radiating solution to the Helmholtz equation.

\begin{proposition}
    Let $\eta>0$, let $\sigma\subset\mathbb Z$ be a finite subset, let $k\in(\mathbb R\setminus\Lambda) \cup \Lambda_{\sigma}$, and let $\varphi\in L^1(\Gamma)$. 
    Let $c\in\mathcal C_\sigma$ satisfy
    \begin{align*}
        \frac{\mathrm i}{L}\int_\Gamma \left( \frac{\partial}{\partial \nu(y)} - \mathrm i\eta \right)  \mathrm e^{-\mathrm i k_s y_1} \varphi(y)\mathrm ds(y) - c_s\sqrt{k^2-k_s^2} = 0 \quad\text{for all }s\in\sigma.
    \end{align*}
    Then
    \begin{align*} 
        u(x) := \int_\Gamma \left(\frac{\partial}{\partial \nu(y)} - \mathrm i\eta \right) \hat G_k(x-y)\varphi(y) \mathrm ds(y) + \frac{1}{2}\sum_{s\in\sigma} c_s\mathrm e^{\mathrm ik_sx_1} 
    \end{align*}
    satisfies 
    \begin{align*}
        \begin{cases}
            -\varDelta u - k^2 u = 0 & \text{in }\mathbb R^2\setminus\Gamma_\mathrm{per},
            \\
            u(x+Le_1) = u(x)\mathrm e^{\mathrm i\beta}& \text{in }\mathbb R^2\setminus\Gamma_\mathrm{per},
            \\
            u\text{ satisfies the upward and downward radiation conditions}.
        \end{cases}
    \end{align*}
\end{proposition}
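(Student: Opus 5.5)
We substitute the definition of $\hat G_k$ into $u$ and write
\begin{align*}
    u(x) = u_0(x) - \frac{\mathrm i}{2L}\sum_{s\in\sigma\setminus\sigma_\ast(k)} \frac{\mathrm e^{\mathrm ik_sx_1}}{\sqrt{k^2-k_s^2}}\int_\Gamma\Bigl(\frac{\partial}{\partial\nu(y)}-\mathrm i\eta\Bigr)\mathrm e^{-\mathrm ik_sy_1}\varphi(y)\,\mathrm ds(y) + \frac12\sum_{s\in\sigma}c_s\mathrm e^{\mathrm ik_sx_1},
\end{align*}
where $u_0(x):=\int_\Gamma(\partial_{\nu(y)}-\mathrm i\eta)G_k(x-y)\varphi(y)\,\mathrm ds(y)$. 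The $y$-derivative acting on $\mathrm e^{\mathrm ik_s(x_1-y_1)}$ produces $\partial_{\nu(y)}\mathrm e^{-\mathrm ik_sy_1}$, which is where the sign in the formula comes from. The compatibility condition gives, for $s\in\sigma\setminus\sigma_\ast(k)$,
\[
\frac{\mathrm i}{L}\int_\Gamma(\partial_\nu-\mathrm i\eta)\mathrm e^{-\mathrm ik_sy_1}\varphi\,\mathrm ds = c_s\sqrt{k^2-k_s^2}.
\]
Hence each subtracted term equals $-\tfrac12 c_s\mathrm e^{\mathrm ik_sx_1}$ and cancels the corresponding correction term. The remaining terms are those with $s\in\sigma_\ast(k)$. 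For these, the condition reads $\frac{\mathrm i}{L}\int_\Gamma(\partial_\nu-\mathrm i\eta)\mathrm e^{-\mathrm ik_sy_1}\varphi\,\mathrm ds=0$, and $c_s$ is unconstrained. We are left with
\[
u=u_0+\tfrac12\sum_{s\in\sigma_\ast(k)}c_s\mathrm e^{\mathrm ik_sx_1}.
\]
Each $\mathrm e^{\mathrm ik_sx_1}$ with $s\in\sigma_\ast(k)$ solves the Helmholtz equation (since $k_s^2=k^2$), is $\beta$-quasi-periodic, and is a pure $A^\pm_s$ term of the Rayleigh expansion with no $B^\pm_s$ contribution. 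So it suffices to handle $u_0$.

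The Helmholtz equation and quasi-periodicity of $u_0$ in $\mathbb R^2\setminus\Gamma_\mathrm{per}$ follow from \cref{prop:green-modified} and \cref{lemma:green-regularity}. The kernel $x\mapsto G_k(x-y)$ is real-analytic off $y+\Theta$, and we may differentiate under the integral for $x$ at positive distance from $\Gamma_\mathrm{per}$; an $L^1$ density $\varphi$ is enough for this. Quasi-periodicity is inherited from $G_k$.

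For the radiation conditions, fix $x_2>H:=\max_{y\in\Gamma}|y_2|$. Then $|x_2-y_2|=x_2-y_2$ for all $y\in\Gamma$, and we insert the series \cref{eq:green-all}. By \cref{lemma:Gk-unif-conv} it converges uniformly, with $y$-derivatives, for $x$ in compact subsets of $\{x_2>H\}$; in fact it is exponentially convergent there since $x_2-y_2\ge x_2-H>0$. This justifies termwise integration against $\varphi$. The non-grazing terms give
\[
\frac{\mathrm i}{2L}\frac{\mathrm e^{\mathrm ik_sx_1+\mathrm i\sqrt{k^2-k_s^2}x_2}}{\sqrt{k^2-k_s^2}}\int_\Gamma(\partial_{\nu(y)}-\mathrm i\eta)\mathrm e^{-\mathrm ik_sy_1-\mathrm i\sqrt{k^2-k_s^2}y_2}\varphi(y)\,\mathrm ds(y),
\]
which are outgoing $A^+_s$ modes. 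The grazing terms, for $s\in\sigma_\ast(k)$, give
\[
-\frac1{2L}\mathrm e^{\mathrm ik_sx_1}\int_\Gamma(\partial_{\nu(y)}-\mathrm i\eta)\bigl[(x_2-y_2)\mathrm e^{-\mathrm ik_sy_1}\bigr]\varphi(y)\,\mathrm ds(y).
\]
Expanding $(x_2-y_2)$, the coefficient of $x_2\mathrm e^{\mathrm ik_sx_1}$ is
\[
B^+_s=-\frac1{2L}\int_\Gamma(\partial_\nu-\mathrm i\eta)\mathrm e^{-\mathrm ik_sy_1}\varphi\,\mathrm ds,
\]
because $\partial_{\nu(y)}$ applied to $x_2$ vanishes. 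This integral is zero by the compatibility condition for $s\in\sigma_\ast(k)\subset\sigma$; this is where $k\in\Lambda_\sigma$ is used. The remaining parts are constant multiples of $\mathrm e^{\mathrm ik_sx_1}$, i.e. $A^+_s$ terms. By uniqueness of the Rayleigh coefficients, $u_0$ satisfies the upward radiation condition.

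The downward case is symmetric. For $x_2<-H$ we have $|x_2-y_2|=y_2-x_2$, the same integral appears as the coefficient of $x_2\mathrm e^{\mathrm ik_sx_1}$ with opposite sign, and it vanishes by the same condition.

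The main obstacle is justifying termwise integration and identifying coefficients. Two points need care: $\partial_{\nu(y)}$ must be exchanged with the series, and the resulting expansion must be recognised as the unique Rayleigh expansion. I would handle both through \cref{lemma:Gk-unif-conv}, which applies because the compact set $\{x-y:y\in\Gamma\}$ stays away from $\Theta$ when $|x_2|>H$. The derivative series are dominated by $C|k_s|\mathrm e^{-(x_2-H)\sqrt{k_s^2-k^2}}$ for large $|s|$, which is summable.
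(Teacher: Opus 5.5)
Your proposal is correct and follows the paper's proof: you use the compatibility condition to reduce $u$ to the $G_k$-potential plus $\tfrac12\sum_{s\in\sigma_\ast(k)}c_s\mathrm e^{\mathrm ik_sx_1}$, then expand termwise in Rayleigh modes above and below $\Gamma$, where the coefficient of $x_2\mathrm e^{\mathrm ik_sx_1}$ vanishes because $\sqrt{k^2-k_s^2}=0$ for $s\in\sigma_\ast(k)\subset\sigma$. Your explicit exponential domination of the differentiated series is the detail the paper leaves implicit. It is needed because \cref{lemma:Gk-unif-conv} as stated covers only the undifferentiated series.
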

\begin{proof}
    By the assumption on $c_s$, we have
    \begin{align*}
        u(x) = \int_\Gamma \left(\frac{\partial}{\partial \nu(y)} - \mathrm i\eta \right)  G_k(x-y)\varphi(y)\mathrm ds(y) + \frac{1}{2}\sum_{s\in \sigma_\ast(k)} c_s\mathrm e^{\mathrm ik_sx_1} \quad\text{for all }x\in \mathbb R^2\setminus\Gamma_\mathrm{per}.
    \end{align*}
    Thus $u$ satisfies $(-\varDelta-k^2)u = 0$ in $\mathbb R^2\setminus\Gamma_\mathrm{per}$ and the quasi-periodic condition. Therefore it suffices to show that $u$ is radiating.

    Let $x\in\mathbb R^2$ be such that $x_2>\max_{y\in\Gamma} y_2$. Then termwise differentiation of $y\mapsto G_k(x-y)$ is justified on $\Gamma$, and thus
    \begin{align*}
        u(x) = \sum_{s\in\mathbb Z} A_s \mathrm e^{\mathrm ik_sx_1 + \mathrm i\sqrt{k^2-k_s^2}x_2}
        +
        \sum_{s\in\sigma_\ast(k)} B_s x_2 \mathrm e^{\mathrm ik_sx_1}
        +
        \frac{1}{2}\sum_{s\in\sigma_\ast(k)} c_s \mathrm e^{\mathrm ik_sx_1} ,
    \end{align*}
    where $A_s$ and $B_s$ are given by
    \begin{align*}
        A_s &= 
        \begin{cases}
            \displaystyle
            \frac{\mathrm i}{2L\sqrt{k^2-k_s^2}} \int_\Gamma \left(\frac{\partial}{\partial \nu(y)} - \mathrm i\eta \right) \mathrm e^{-\mathrm ik_sy_1 - \mathrm i\sqrt{k^2-k_s^2}y_2}\varphi(y)\mathrm ds(y) & s\in \mathbb Z\setminus \sigma_\ast(k),
        \\
            \displaystyle
            \frac{1}{2L}\int_\Gamma \left(\frac{\partial}{\partial \nu(y)} - \mathrm i\eta \right) (y_2\mathrm e^{-\mathrm ik_sy_1}) \varphi(y) \mathrm ds(y) & s\in\sigma_\ast(k),
        \end{cases}
        \\
        B_s &= -\frac{1}{2L}\int_\Gamma \left(\frac{\partial}{\partial \nu(y)} - \mathrm i\eta \right) \mathrm e^{-\mathrm ik_sy_1} \varphi(y)\mathrm ds(y) \quad s\in\sigma_\ast(k).
    \end{align*}
    By the assumption on $c_s$, we have $B_s=0$ for all $s\in\sigma_\ast(k)$. This implies that $u$ is radiating in the upward direction. Analogously, we can check that $u$ is radiating in the downward direction.
\end{proof}

The well-posedness of the boundary integral equation $(Q_k)$ will be established from the unique solvability of a complementary Robin problem.

\begin{lemma}[Kirsch \cite{kirsch1993diffraction}]\label{lemma:robin}
    Let $k\in\mathbb R$ and let $\eta>0$. Then the Robin problem
    \begin{align*}
        \begin{cases}
            -\varDelta u - k^2 u = 0 & \text{in }\Omega_-,
            \\
            u(x+Le_1) = u(x)\mathrm e^{\mathrm i\beta} & \text{in }\Omega_-,
            \\
            \partial_\nu u = \mathrm i\eta u &\text{on }\Gamma_\mathrm{per},
            \\
            u\text{ is radiating in the downward direction} &
        \end{cases}
    \end{align*}
    admits only the trivial solution $u=0$ in $C^2(\Omega_-)\cap C(\overline\Omega_-)$, where the normal derivative $\partial_\nu u$ of a function $u\in C^2(\Omega_-) \cap C(\overline\Omega_-)$ is understood in the sense that
    \begin{align*}
        \nu(x)\cdot \nabla u(x-h\nu(x)) \xrightarrow[h\to 0_+]{} \partial_\nu u \quad\text{uniformly for }x\in\Gamma_\mathrm{per}.
    \end{align*}
\end{lemma}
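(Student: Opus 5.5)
The plan is the Rellich identity in the bounded strip $\Omega_-^H := \{x\in\Omega_- : x_2>-H\}\cap\{0<x_1<L\}$ (one period), with the lateral boundaries cancelling by quasi-periodicity. We take $H$ large so that the downward Rayleigh expansion holds for $x_2\le -H+1$. Because $u$ is radiating downward,
\[
u(x)=\sum_{s} A^-_s\mathrm e^{\mathrm ik_sx_1-\mathrm i\sqrt{k^2-k_s^2}x_2}\quad(x_2<H_-),
\]
with the grazing modes $s\in\sigma_\ast(k)$ contributing only constants in $x_2$. Green's first identity applied to $u$ and $\bar u$ on $\Omega_-^H$ gives
\[
\int_{\Omega_-^H}\left(|\nabla u|^2-k^2|u|^2\right)\mathrm dx=\int_{\Gamma}\bar u\,\partial_\nu u\,\mathrm ds-\int_{\{x_2=-H\}}\bar u\,\partial_{x_2}u\,\mathrm dx_1 .
\]
The lateral contributions cancel since $\bar u\,\nabla u$ is $L$-periodic in $x_1$. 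The sign convention is that $\nu$ points out of $\Omega_-$ on $\Gamma$, and the outward normal on the bottom is $-e_2$.

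Next we take imaginary parts. The left-hand side is real. On $\Gamma$ the Robin condition gives $\operatorname{Im}\int_\Gamma \bar u\,\partial_\nu u\,\mathrm ds=\eta\int_\Gamma|u|^2\mathrm ds$. On the bottom line, orthogonality of the $\mathrm e^{\mathrm ik_sx_1}$ gives
\[
\int_0^L \bar u\,\partial_{x_2}u\,\mathrm dx_1 = L\sum_s |A^-_s|^2\,\overline{\mathrm e^{-\mathrm i\sqrt{k^2-k_s^2}x_2}}\,\bigl(-\mathrm i\sqrt{k^2-k_s^2}\bigr)\mathrm e^{-\mathrm i\sqrt{k^2-k_s^2}x_2}.
\]
For propagating modes this contributes $-\mathrm iL\sqrt{k^2-k_s^2}\,|A_s^-|^2$. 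Evanescent modes give real contributions, and grazing modes give zero since their derivative vanishes. Hence
\[
0=\eta\int_\Gamma|u|^2\,\mathrm ds+L\sum_{k_s^2<k^2}\sqrt{k^2-k_s^2}\,|A^-_s|^2 .
\]
Both terms are nonnegative, so $u=0$ on $\Gamma$ (using $\eta>0$), and then $\partial_\nu u=\mathrm i\eta u=0$ on $\Gamma$ as well.

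With vanishing Cauchy data on $\Gamma$, Holmgren's theorem (or unique continuation via the Green representation) gives $u\equiv0$ in $\Omega_-$. Concretely, we represent $u$ in $\Omega_-$ by Green's formula against a local fundamental solution in a neighbourhood of a boundary point, and extend $u$ by zero across $\Gamma$. The extended function solves the Helmholtz equation near $\Gamma$, and analyticity then propagates the zero through the connected set $\Omega_-$.

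The main obstacle is regularity. We only have $u\in C^2(\Omega_-)\cap C(\overline\Omega_-)$ and the normal derivative in the weak, uniform-limit sense, so Green's identity cannot be applied directly up to $\Gamma$. To handle this, we first apply the identity on the parallel curves $\Gamma_h:=\{x-h\nu(x)\}$, which lie in $\Omega_-$ for small $h$ since $\Gamma$ is $C^{1,1}$. We then pass to the limit $h\to0_+$ using the assumed uniform convergence of $\nu\cdot\nabla u$ and the continuity of $u$. Finiteness of $\int|\nabla u|^2$ near $\Gamma$ is not needed, because only imaginary parts are used and the volume integral is real for each $h$. The unique-continuation step likewise uses only the uniform limits, which is enough for the Green representation argument on the curves $\Gamma_h$.
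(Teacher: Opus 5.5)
The paper gives no proof of this lemma and simply cites Kirsch; your argument is the standard proof behind that citation, and it is correct. You take imaginary parts of Green's first identity on a period cell (using the parallel curves $\Gamma_h$ to handle the weak boundary regularity), combine $\eta>0$ with the nonnegative downward flux $L\sum_{k_s^2<k^2}\sqrt{k^2-k_s^2}\,|A_s^-|^2$ to get vanishing Cauchy data on $\Gamma$, and conclude by extending $u$ by zero and using analyticity. Your observation that grazing modes carry no flux is what makes the argument valid at Wood anomalies under the paper's definition of the radiation condition.
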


Using this result, we shall prove \cref{thm:main-1}.

\begin{proof}[Proof of \cref{thm:main-1}]
    Step 1. Let $k\in\mathbb R$ and choose a finite subset $\sigma\subset\mathbb Z$ such that $k\in(\mathbb R\setminus\Lambda)\cup\Lambda_\sigma$. We first claim that $I+A_k:C_\beta(\Gamma_\mathrm{per})\times\mathcal C_\sigma\to C_\beta(\Gamma_\mathrm{per})\times\mathcal C_\sigma$ is invertible. The mapping property and compactness of the linear operator $A_k$ follow from \cref{prop:cts-lipschitz}.
    Therefore it suffices to show that $I+A_k$ is injective to prove the invertibility. Let $(\varphi,c)\in C_\beta(\Gamma_\mathrm{per})\times\mathcal C_\sigma$ be a solution to $(I+A_k) (\varphi,c) = (0,0)$. Define
    \begin{align*}
        w(x):=
        \int_\Gamma
        \left(
            \frac{\partial\hat G_k}{\partial\nu(y)}(x-y)
            -\mathrm i\eta\hat G_k(x-y)
        \right)
        \varphi(y)\mathrm ds(y)
        +
        \frac12\sum_{s\in\sigma}c_s\mathrm e^{\mathrm ik_sx_1} \ \ \text{for }x\in \mathbb R^2\setminus\Gamma_\mathrm{per}.
    \end{align*}
    Then $w$ satisfies
    \begin{align*}
        \begin{cases}
            -\varDelta w - k^2 w = 0 &\text{in }\mathbb R^2\setminus\Gamma_\mathrm{per},
            \\
            w(x+Le_1) = w(x)\mathrm e^{\mathrm i\beta} &\text{in }\mathbb R^2\setminus\Gamma_\mathrm{per},
            \\
            w\text{ is radiating in the upward and downward directions}.&
        \end{cases}
    \end{align*}
    By \cref{lemma:green-regularity}, the standard jump relations hold (cf. \cite{kress2014linear}); thus we obtain that
    \begin{align*}
        w(x+h\nu(x)) \xrightarrow[h\to 0_+]{} 
        \frac12\varphi(x)
        +
        \frac12(D_k\varphi)(x)
        -
        \frac{\mathrm i\eta}{2}(S_k\varphi)(x)
        +
        \frac12(Rc)(x)
        = 0
    \end{align*}
    uniformly for $x\in\Gamma_\mathrm{per}$. Therefore $w$ solves the homogeneous version of $(P_k)$ and hence $w=0$ in $\overline\Omega_+$ by assumption. Analogously, we have
    \begin{align*}
        w(x-h\nu(x))
        \xrightarrow[h\to0_+]{}
        -\frac12\varphi(x)
        +
        \frac12(D_k\varphi)(x)
        -
        \frac{\mathrm i\eta}{2}(S_k\varphi)(x)
        +
        \frac12(Rc)(x)
        =
        -\varphi(x)
    \end{align*}
    and
    \begin{align*}
        -\nu(x)\cdot\nabla w(x-h\nu(x))
        =
        \nu(x)\cdot \left( \nabla w(x+h\nu(x))
        -
        \nabla w(x-h\nu(x)) \right)
        \xrightarrow[h\to0_+]{}
        \mathrm i\eta\varphi(x)
    \end{align*}
    uniformly for $x\in\Gamma_\mathrm{per}$. Thus $w$ solves the Robin problem in \cref{lemma:robin} and hence $w=0$ in $\overline\Omega_-$. In summary, we obtain $\varphi=0$ and hence $\sum_{s\in\sigma} c_s \mathrm e^{\mathrm ik_s x_1} = 0$ in $\mathbb R^2\setminus\Gamma_\mathrm{per}$, i.e., $c=0$. Therefore $I+A_k$ is invertible.

    Step 2. Let $g\in C_\beta(\Gamma_\mathrm{per})$. We claim that $(P_k)$ admits a unique solution with the asserted solution formula. From Step 1, there exists a unique solution $(\varphi,c)$ to $(Q_k)$. Using this solution, we define
    \begin{align*}
        u_k(x) := \int_\Gamma \left(\frac{\partial\hat G_k}{\partial\nu(y)}(x-y) -\mathrm i\eta \hat G_k(x-y) \right) \varphi(y) \mathrm ds(y) + \frac{1}{2}\sum_{s\in\sigma} c_s\mathrm e^{\mathrm ik_sx_1}.
    \end{align*}
    By the same arguments as in Step 1, we can show that $u_k$ belongs to $C^2(\Omega_+)\cap C(\overline\Omega_+)$ with $u_k=g$ on $\Gamma_\mathrm{per}$ and it solves $(P_k)$. It remains to show that this solution is unique. If $v$ is another solution of $(P_k)$ with the same boundary data $g$, then $u_k-v$ solves the homogeneous version of $(P_k)$. By the assumed uniqueness of the homogeneous problem, it follows that $u_k-v=0$.

    Step 3. 
    Since the set of Wood anomaly frequencies is locally finite, we may choose a finite set $\sigma\subset\mathbb Z$ and a neighborhood $U\subset J$ of $k_0$ such that $U\subset(\mathbb R\setminus\Lambda)\cup \Lambda_\sigma$. By \cref{prop:cts-lipschitz}, the mappings $k\mapsto S_k$ and $k\mapsto D_k$ are continuous from $U$ into $\mathcal L(C_\beta(\Gamma_\mathrm{per}),C_\beta(\Gamma_\mathrm{per}))$. Therefore $k\mapsto A_k$ is also continuous from $U$ into $\mathcal L(C_\beta(\Gamma_\mathrm{per})\times\mathcal C_\sigma,C_\beta(\Gamma_\mathrm{per})\times\mathcal C_\sigma)$.

    Step 4. Define $(\varphi_k,c_k)\in C_\beta(\Gamma_\mathrm{per})\times\mathcal C_\sigma$ as the unique solution to $(Q_k)$ with $g=g_k$ for each $k\in U$. By Step 2, $(P_k)$ with $g=g_k$ is uniquely solvable, and its solution $u_k\in C^2(\Omega_+)\cap C(\overline\Omega_+)$ is written as
    \begin{align*}
        u_k(x) = 
        \begin{cases}
            \displaystyle
            \int_\Gamma \left(\frac{\partial\hat G_k}{\partial\nu(y)}(x-y) -\mathrm i\eta \hat G_k(x-y) \right) \varphi_k(y) \mathrm ds(y) + \frac{1}{2}\sum_{s\in\sigma} (c_{k})_s\mathrm e^{\mathrm ik_sx_1} & x\in \Omega_+,
            \\
            \displaystyle
            \frac{1}{2}\varphi_k(x) + \int_\Gamma
        \left(
            \frac{\partial\hat G_k}{\partial\nu(y)}(x-y)
            -\mathrm i\eta\hat G_k(x-y)
        \right)
        \varphi_k(y)\mathrm ds(y)
        +
        \frac12\sum_{s\in\sigma}(c_{k})_s\mathrm e^{\mathrm ik_sx_1} & x\in\Gamma_\mathrm{per}.
        \end{cases}
    \end{align*}
    For every compact subset $V\subset\overline\Omega_+$, this yields
    \begin{align*}
        \sup_{x\in V} |u_\kappa(x)-u_{k_0}(x)| &\leq C\left( \|\varphi_\kappa-\varphi_{k_0}\|_{C_\beta(\Gamma_\mathrm{per})}  +   \|c_\kappa-c_{k_0}\|_{\mathcal C_\sigma} \right)
        \\
        &+ \sup_{x\in V} \| (\partial_{\nu}-\mathrm i\eta) \hat G_\kappa(x-\cdot) \varphi_\kappa(\cdot) - (\partial_{\nu}-\mathrm i\eta) \hat G_{k_0}(x-\cdot) \varphi_{k_0}(\cdot)\|_{L^1(\Gamma)}
        \\
        &\xrightarrow[\kappa\to k_0]{} 0
    \end{align*}
    for some $C>0$, since $k\mapsto \varphi_k$ is continuous from $U$ into $C_\beta(\Gamma_\mathrm{per})$ by Step 3 and
    \begin{align*}
        \sup_{x\in V} \| (\partial_{\nu}-\mathrm i\eta) \hat G_\kappa(x-\cdot)- (\partial_{\nu}-\mathrm i\eta)\hat G_{k_0}(x-\cdot)\|_{L^1(\Gamma)} \xrightarrow[\kappa\to k_0]{} 0
    \end{align*}
    by \cref{prop:Gk-cts} together with
    \begin{align*}
        \hat G_\kappa(x) - \hat G_{k_0}(x)
        &= \left( G_{\kappa}(x) - G_{k_0}(x) - \frac{\mathrm i}{2L}\sum_{s\in\sigma_\ast(k_0)}\frac{\mathrm e^{\mathrm ik_sx_1}}{\sqrt{\kappa^2-k_s^2}} \right) 
        \\
        &+\frac{\mathrm i}{2L} \sum_{s\in\sigma\setminus\sigma_\ast(k_0)} \left( \frac{\mathrm e^{\mathrm ik_sx_1}}{\sqrt{k_0^2-k_s^2}} - \frac{\mathrm e^{\mathrm ik_sx_1}}{\sqrt{\kappa^2-k_s^2}} \right)
        \notag
    \end{align*}
    for all $\kappa\in U\setminus\Lambda$ and $x\in\mathbb R^2\setminus\Theta$.
    This completes the proof.
\end{proof}

\section{Nystr\"om method}
For each frequency $k\in\mathbb R$, \cref{thm:main-1} states that the scattering problem $(P_k)$ can be reduced to the boundary integral equation $(Q_k)$ under the uniqueness assumption. This motivates us to develop a numerical scheme for approximating a solution to $(Q_k)$. 

To this end, we consider the following equivalent formulation of $(Q_k)$ using the parameterization of $\Gamma_\mathrm{per}$ by $\gamma$:
\begin{align*}
    (\widetilde Q_k)\quad&\text{Find }(\widetilde\varphi, \widetilde c)\in C(\mathbb T)\times\mathcal C_\sigma \text{ such that }
    \\
    &\Biggl(
    \begin{bmatrix}
        I & \\ & I
    \end{bmatrix}
    +
    \underbrace{
    \begin{bmatrix}
        \widetilde D_k - \mathrm i\eta \widetilde S_k & \widetilde R
    \\
        \widetilde T & -\mathrm{diag} (1+\sqrt{k^2-k_s^2} )_{s\in\sigma}
    \end{bmatrix}
    }_{=:\widetilde A_k}
    \Biggr)
    \begin{pmatrix}
        \widetilde\varphi \\ \widetilde c
    \end{pmatrix}
    =
    \begin{pmatrix}
        2 \widetilde g_k \\ 0
    \end{pmatrix}
    ,
\end{align*}
where $\widetilde g_k$ is defined by $\widetilde g_k(t):=e^{-\mathrm i\beta t/(2\pi)}g_k(\gamma(t))
$ and $\widetilde S_k,\widetilde D_k:C(\mathbb T)\to C(\mathbb T)$ are the compact linear operators defined by 
\begin{align*}
    (\widetilde S_k\widetilde\varphi)(t) &:= 2\int_0^{2\pi}
        \mathrm e^{-\mathrm i\beta(t-\tau)/(2\pi)}
        \hat G_k(\gamma(t)-\gamma(\tau))
        |\gamma^\prime(\tau)|
        \widetilde\varphi(\tau)
        \mathrm d\tau,
    \\
    (\widetilde D_k\widetilde\varphi)(t)
        &:=
        2\int_0^{2\pi}
        \mathrm e^{-\mathrm i\beta(t-\tau)/(2\pi)}
        \frac{\partial \hat G_k}{\partial\nu(y)}
        (\gamma(t)-\gamma(\tau))
        |\gamma^\prime(\tau)|
        \widetilde\varphi(\tau)
        \mathrm d\tau.
\end{align*}
In addition, the linear operators $\widetilde R:\mathcal C_\sigma\to C(\mathbb T)$ and $\widetilde T:C(\mathbb T)\to\mathcal C_\sigma$ are defined by
\begin{align*}
    (\widetilde R c)(t)
        &:=
        \mathrm e^{-\mathrm i\beta t/(2\pi)}
        \sum_{s\in\sigma}
        c_s
        \mathrm e^{\mathrm ik_s\gamma_1(t)},
    \\
        (\widetilde T\widetilde\varphi)_s
        &:=
        \frac{\mathrm i}{L}
        \int_0^{2\pi}
        (\partial_{\nu(y)} - \mathrm i\eta) \mathrm e^{-\mathrm ik_s y_1}|_{y=\gamma(\tau)}
        \mathrm e^{\mathrm i\beta\tau/(2\pi)}
        |\gamma^\prime(\tau)|
        \widetilde\varphi(\tau)
        \mathrm d\tau.
\end{align*}
It is clear that $(\widetilde Q_k)$ is uniquely solvable if and only if $(Q_k)$ is uniquely solvable. In this case, their unique solutions are related by $\widetilde \varphi(t)=\mathrm e^{-\mathrm i\beta t/(2\pi)} \varphi(\gamma(t))$ for all $t\in\mathbb T$ and $\widetilde c = c$.

\subsection{Approximation by quadrature}

In view of the assumed regularity conditions on $\gamma$ and $g$, the Nystr\"om method is a natural choice for the problem $(\widetilde Q_k)$. The difficulty is, however, that the logarithmic singularity inherited from the single-layer part $\widetilde S_k$ degrades the convergence rate of quadrature rules. The standard approach is to find a splitting $A(t,\tau) = A^{(1)}(t,\tau)\log (4\sin^2((t-\tau)/2)) + A^{(2)}(t,\tau)$ of a given kernel $A$ such that $A^{(1)}$ and $A^{(2)}$ are smooth and periodic. This strategy works well for $A(t,\tau):= H^{(1)}_0(k|\xi(t)-\xi(\tau)|)$ by choosing $A^{(1)}(t,\tau)=(\mathrm i/\pi)J_0(k|\xi(t)-\xi(\tau)|)$, where $\xi:\mathbb T\to\mathbb R^2$ is a parameterization of a smooth Jordan curve in $\mathbb R^2$ \cite{kress1991boundary,kress2014linear}. While the Hankel function cancels out the singularity of the quasi-periodic Green function $G_k$ at a single point in $\Theta$, the same splitting of $H^{(1)}_0(k|\gamma(t)-\gamma(\tau)|)$ does not work since $\Gamma$ is not a Jordan curve and thus $(t,\tau)\mapsto |\gamma(t)-\gamma(\tau)|$ is not periodic in our setting. Indeed, the singularities of the periodic kernel occur when $t-\tau\in2\pi\mathbb Z$, corresponding to the lattice points in $\Theta$, whereas the non-periodic quantity $|\gamma(t)-\gamma(\tau)|$ captures only the local singularity at $t=\tau$.

Instead, we rely on the Ewald representation of $G_k$ derived in Section \ref{ss:ewald}. In particular, by \cref{prop:ewald-split} it suffices to find a suitable quadrature rule for the singular part $G^S_k$. To this end, we define
\begin{align*}
    \mathcal E(\theta)
    :=
    \sum_{\ell\in\mathbb Z}
    \mathrm e^{-\mathrm i\beta(\theta-2\pi\ell)/(2\pi)}
    E_1\!\left(\zeta^2(\theta-2\pi\ell)^2\right)
    \quad \text{for }\theta\in\mathbb R\setminus(2\pi\mathbb Z)
\end{align*}
and consider the quadrature rule for the convolution $(\mathcal E\ast f)(t):=\int_0^{2\pi} \mathcal E(t-\tau)f(\tau)\mathrm d\tau$, where $f$ is a Lipschitz function. After the parametrization $x=\gamma(t)$, $y=\gamma(\tau)$, the logarithmic part of the single-layer contribution generated by $G_k^S$ is represented, up to a continuous remainder, by the periodic convolution kernel $\mathcal E(t-\tau)$.

\begin{lemma}\label{lemma:error-quadrature}
    Let $\zeta>0$. Then for every $f\in C(\mathbb T)$ it holds that
    \begin{align*}
        \sup_{t\in \mathbb T} \left| 
            \int_0^{2\pi} \mathcal E(t-\tau) f(\tau) \mathrm d\tau
            -
            \sum_{j=0}^{2n-1} R^{(n)}_j(t) f(t^{(n)}_j)
        \right| 
        \xrightarrow[n\to\infty]{} 0.
    \end{align*}
    Moreover,
    \begin{align*}
        \sup_{t\in \mathbb T} 
        \left| 
            \int_0^{2\pi} \mathcal E(t-\tau) f(\tau) \mathrm d\tau
            -
            \sum_{j=0}^{2n-1} R^{(n)}_j(t) f(t^{(n)}_j)
        \right| &
        \\
        \le
        2\sqrt{2}(1 + \pi/2) &\pi^{5/4} \sqrt{\zeta^{-1}\coth (\pi^{3/2}\zeta)} \operatorname{Lip}(f) n^{-1}
    \end{align*}
    for all $f\in C^{0,1}(\mathbb T)$ and $n\in\mathbb N$, 
    where $t^{(n)}_j$ and $R^{(n)}_j$ are defined in \cref{eq:def-t,eq:def-R}, respectively.
\end{lemma}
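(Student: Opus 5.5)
The plan is to recognize the rule $\sum_j R^{(n)}_j(t)f(t^{(n)}_j)$ as product integration against the trigonometric interpolant of $f$. The error then reduces to an $L^2$ interpolation estimate multiplied by $\|\mathcal E\|_{L^2(\mathbb T)}$.

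\textbf{Step 1: Fourier coefficients of $\mathcal E$.} Reindexing $\ell\mapsto\ell+1$ shows that $\mathcal E$ is $2\pi$-periodic. Its Fourier coefficients are given by the whole-line integral
\[
  \tfrac{1}{2\pi}\int_{\mathbb R}\mathrm e^{-\mathrm i(p+\beta/(2\pi))\theta}E_1(\zeta^2\theta^2)\,\mathrm d\theta .
\]
Inserting $E_1(z)=\int_1^\infty t^{-1}\mathrm e^{-zt}\,\mathrm dt$, applying Fubini, and evaluating the Gaussian integral leaves a $t$-integral. This $t$-integral should equal $\operatorname{erf}(|\omega|/(2\zeta))/|\omega|$ with $\omega=p+\beta/(2\pi)$, and $1/(\sqrt\pi\,\zeta)$ when $\omega=0$. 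The expected conclusion is $\mathcal E(\theta)=\sum_{p\in\mathbb Z}\widehat{\mathcal E}_p\mathrm e^{\mathrm ip\theta}$ with exactly the coefficients in \cref{eq:def-R}. I will track the normalization carefully here. Since $|\widehat{\mathcal E}_p|\le\min\{(\sqrt\pi\zeta)^{-1},|p+\beta/(2\pi)|^{-1}\}$, the function $\mathcal E$ lies in $L^2(\mathbb T)$. A bound for $\sum_p|\widehat{\mathcal E}_p|^2$ of the form $\sum_p \bigl((p+b)^2+c\zeta^2\bigr)^{-1}$ can be summed by the partial-fraction formula for $\coth$. I expect this to produce the factor $\zeta^{-1}\coth(\pi^{3/2}\zeta)$.

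\textbf{Step 2: product-integration identity.} Let $\ell^{(n)}_j(\tau)=\frac1{2n}\sum_{p=-n}^{n-1}\mathrm e^{\mathrm ip(\tau-t^{(n)}_j)}$ be the Lagrange basis of trigonometric interpolation on the grid $t^{(n)}_j$, and let $I_nf=\sum_jf(t^{(n)}_j)\ell^{(n)}_j$. By orthogonality,
\[
  \int_0^{2\pi}\mathcal E(t-\tau)\ell^{(n)}_j(\tau)\,\mathrm d\tau=\frac{\pi}{n}\sum_{p=-n}^{n-1}\widehat{\mathcal E}_p\mathrm e^{\mathrm ip(t-t^{(n)}_j)}=R^{(n)}_j(t).
\]
Hence the error equals $\int_0^{2\pi}\mathcal E(t-\tau)(f-I_nf)(\tau)\,\mathrm d\tau$. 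By Cauchy--Schwarz, its supremum over $t$ is at most $\|\mathcal E\|_{L^2}\|f-I_nf\|_{L^2}$.

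\textbf{Step 3: the Lipschitz estimate.} For $f\in C^{0,1}(\mathbb T)$ we have $f'\in L^\infty$ with $\|f'\|_{L^2}\le\sqrt{2\pi}\operatorname{Lip}(f)$. The standard aliasing argument splits $f-I_nf$ into the truncation tail $\sum_{p\notin[-n,n-1]}\hat f_p\mathrm e^{\mathrm ip\tau}$ and the aliased part. Each part is bounded in $L^2$ by a constant times $n^{-1}\|f'\|_{L^2}$, which should account for the factor $1+\pi/2$. Combined with Step 1, this yields the stated $n^{-1}$ bound. Tracking the explicit constant $2\sqrt2(1+\pi/2)\pi^{5/4}\sqrt{\zeta^{-1}\coth(\pi^{3/2}\zeta)}$ is the fiddliest part. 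Both the $\coth$ sum in Step 1 and the aliasing constant need care, but I expect no conceptual obstacle.

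\textbf{Step 4: continuous $f$.} The discrete Parseval identity gives $\|I_nf\|_{L^2}^2=\frac{\pi}{n}\sum_j|f(t^{(n)}_j)|^2\le2\pi\|f\|_\infty^2$. Therefore the quadrature operators $Q_n f(t)=\sum_jR^{(n)}_j(t)f(t^{(n)}_j)$ satisfy
\[
  \sup_t|Q_nf(t)|\le\sqrt{2\pi}\,\|\mathcal E\|_{L^2}\|f\|_\infty
\]
uniformly in $n$. The exact convolution operator obeys the same bound. The errors therefore form a uniformly bounded family of operators $C(\mathbb T)\to C(\mathbb T)$ that tends to zero on the dense subspace of Lipschitz functions by Step 3. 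An $\varepsilon/3$ density argument then gives convergence for every $f\in C(\mathbb T)$.
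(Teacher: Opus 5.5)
Your proposal is correct and follows the paper's proof essentially step for step: you write the rule as $\mathcal E\ast P_nf$ via $R^{(n)}_j=\mathcal E\ast\mathcal L^{(n)}_j$, bound $\|\mathcal E\|_{L^2(\mathbb T)}$ with the $\coth$ partial-fraction sum, split $f-P_nf$ into a truncation part and an aliasing part to get the factor $(1+\pi/2)n^{-1}\|f'\|_{L^2(\mathbb T)}$, and handle continuous $f$ by uniform boundedness plus density. The only differences are minor: you explicitly check that the $\widehat{\mathcal E}_p$ in the definition of $R^{(n)}_j$ are the Fourier coefficients of $\mathcal E$, which the paper takes for granted, and you use density of Lipschitz functions rather than trigonometric polynomials.
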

\begin{proof}
    Since $E_1(\zeta^2\theta^2) = -\gamma_\mathrm{E} - \log(\zeta^2\theta^2) + O(\theta^2)$ for small $\theta$, while the remaining terms decay exponentially, we have $\mathcal E\in L^2(\mathbb T)$. 
    We first estimate $\|\mathcal E\|_{L^2(\mathbb T)}$. From Parseval's identity for the Fourier coefficients $\widehat{\mathcal E}_p$ of $\mathcal E$, it follows that
    \begin{align}
        \|\mathcal E\|_{L^2(\mathbb T)}^2 &= 2\pi\sum_{p=-\infty}^\infty | \widehat{\mathcal E}_p |^2 \le 2\pi \sum_{p=-\infty}^\infty \frac{2}{(p+\beta/(2\pi))^2+\pi\zeta^2}
        \notag \\ 
        &= 
        4\pi^{3/2}\zeta^{-1}
        \frac{ \sinh(2\pi^{3/2}\zeta)}{\cosh(2\pi^{3/2}\zeta) - \cos \beta}
        \le 4\pi^{3/2}\zeta^{-1} \coth (\pi^{3/2}\zeta).
        \label{eq:tmp:12}
    \end{align}
    In particular, this implies that $f\mapsto \mathcal E\ast f$ is bounded from $C(\mathbb T)$ into itself.
    
    Let $f\in C(\mathbb T)$. For each $n\in\mathbb N$, we define $P_n f\in C(\mathbb T)$ as the trigonometric interpolant of $f$ with nodes $t^{(n)}_0,\ldots,t^{(n)}_{2n-1}$, given by 
    \begin{align*}
        (P_n f)(t) = \sum_{j=0}^{2n-1} f(t^{(n)}_j) \mathcal L^{(n)}_j(t) \quad\text{for all }t\in\mathbb T
    \end{align*}
    with the Lagrange basis
    \begin{align*}
        \mathcal L^{(n)}_j(t) = \frac{1}{2n}\sum_{p=-n}^{n-1}\mathrm e^{\mathrm i p(t-t^{(n)}_j)} \quad\text{for }j=0,1,\ldots,2n-1\text{ and } t\in\mathbb T.
    \end{align*}
    Then, using the Fourier series, we obtain that $R^{(n)}_j = \mathcal E\ast \mathcal L^{(n)}_j$ for all $n\in\mathbb N$ and $j=0,1,\ldots,2n-1$, which implies 
    \begin{align*}
        \sum_{j=0}^{2n-1} R^{(n)}_j(t) f(t^{(n)}_j) = \int_0^{2\pi} \mathcal E(t-\tau) (P_n f)(\tau)\mathrm d\tau \quad\text{for all }n\in\mathbb N \text{ and }t\in\mathbb T.
    \end{align*}
    Therefore, if $f$ is a trigonometric polynomial, i.e., $P_nf=f$ for sufficiently large $n\in\mathbb N$, then the asserted uniform convergence trivially holds. In addition, it is straightforward to see that $\|P_n f\|_{L^2(\mathbb T)}\le \sqrt{2\pi}\|f\|_{C(\mathbb T)}$, which implies that the family of linear operators
    \begin{align*}
        \left\{ f\mapsto \sum_{j=0}^{2n-1} R^{(n)}_j(\cdot) f(t^{(n)}_j) \right\}_{n\in\mathbb N}
    \end{align*}
    from $C(\mathbb T)$ into itself is uniformly bounded.
    Therefore, by density of trigonometric polynomials in $C(\mathbb T)$, the same uniform convergence holds for all $f\in C(\mathbb T)$.

    Subsequently, assume that $f\in C^{0,1}(\mathbb T)$ and let $n\in\mathbb N$ be fixed. Then 
    \begin{align} 
        \left|
            \int_0^{2\pi} \mathcal E(t-\tau) f(\tau) \mathrm d\tau
            -
            \sum_{j=0}^{2n-1} R^{(n)}_j(t) f(t^{(n)}_j)
            \right|
        &= \left|\int_0^{2\pi} \mathcal E(t-\tau) (f-P_n f)(\tau)\mathrm d\tau \right|
        \notag \\ 
        &\le \| \mathcal E \|_{L^2(\mathbb T)} \| f - P_n f\|_{L^2(\mathbb T)} \quad\text{for all }t\in\mathbb T.
        \label{eq:tmp:11}
    \end{align}
    Thus it suffices to estimate the error $\| f - P_n f\|_{L^2(\mathbb T)}$. To this end, we define the truncated Fourier series
    \begin{align*}
        (S_n f)(t) := \frac{1}{2\pi}\sum_{p=-n}^{n-1}\mathrm e^{\mathrm i p t} \int_0^{2\pi} f(\tau)\mathrm e^{-\mathrm i p\tau}\,\mathrm d\tau \quad\text{for }t\in\mathbb T,
    \end{align*}
    so that $\| f - P_n f\|_{L^2(\mathbb T)} \leq \| f - S_n f\|_{L^2(\mathbb T)} + \| S_n f - P_n f\|_{L^2(\mathbb T)}$. By Parseval's identity, we have
    \begin{align*}
        \| f - S_n f\|_{L^2(\mathbb T)}^2 &= 2\pi\sum_{p\notin \{-n,\ldots,n-1\}} \left| \frac{1}{2\pi} \int_0^{2\pi} f(\tau)\mathrm e^{-\mathrm i p\tau}\,\mathrm d\tau \right|^2
        \\
        &\le \frac{2\pi}{n^2} \sum_{p=-\infty}^\infty \left| \frac{p}{2\pi} \int_0^{2\pi} f(\tau)\mathrm e^{-\mathrm i p\tau}\,\mathrm d\tau \right|^2
        = \frac{1}{n^2} \| f^\prime \|_{L^2(\mathbb T)}^2,
    \end{align*}
    where $f^\prime\in L^\infty(\mathbb T)$ denotes the weak derivative. Observing that
    \begin{align*}
        (P_n f)(t)
        =
        \frac{1}{2\pi}
        \sum_{p=-n}^{n-1}
        \left(
            \sum_{m=-\infty}^\infty
            \int_0^{2\pi} f(\tau)\mathrm e^{-\mathrm i (p+2nm)\tau}\,\mathrm d\tau
        \right)
        \mathrm e^{\mathrm i p t}
        \quad\text{for all }t\in\mathbb T,
    \end{align*}
    we obtain
    \begin{align*}
        \| S_n f - P_n f\|_{L^2(\mathbb T)}^2 &= \frac{1}{2\pi} \sum_{p=-n}^{n-1}
        \left| 
            \sum_{m\neq 0}
            \int_0^{2\pi} f(\tau)\mathrm e^{-\mathrm i (p+2nm)\tau}\,\mathrm d\tau
        \right|^2
        \\
        &\le \frac{\pi^3}{2n^2} \sum_{p=-\infty}^\infty \left| \frac{p}{2\pi} \int_0^{2\pi} f(\tau)\mathrm e^{-\mathrm i p\tau}\,\mathrm d\tau \right|^2
        = \frac{\pi^2}{4n^2} \| f^\prime \|_{L^2(\mathbb T)}^2.
    \end{align*}
    Therefore, we obtain
    \begin{align}
    \label{eq:tmp:13}
        \| f - P_n f\|_{L^2(\mathbb T)} \le (1 + \pi/2) \| f^\prime \|_{L^2(\mathbb T)} n^{-1} \le \sqrt{2\pi}(1 + \pi/2) \operatorname{Lip}(f) n^{-1}.
    \end{align}
    Inserting \cref{eq:tmp:12,eq:tmp:13} into \cref{eq:tmp:11}, we obtain the asserted error estimate.
\end{proof}

In view of the previous lemma, we approximate the linear operator $\widetilde A_k:C(\mathbb T)\times\mathcal C_\sigma\to C(\mathbb T)\times\mathcal C_\sigma$ by the sequence $(\widetilde A_{k,n})$ of linear operators on $C(\mathbb T)\times\mathcal C_\sigma$ defined as
\begin{align*}
    \widetilde A_{k,n}
        =
        \begin{bmatrix}
            \widetilde A^{(1)}_{n} + \widetilde A^{(2)}_{k,n} & \widetilde R\\
            \widetilde T_{n} & -\mathrm{diag} (1+\sqrt{k^2-k_s^2} )_{s\in\sigma}
        \end{bmatrix},
\end{align*}
where the finite-rank operators $\widetilde A^{(1)}_n$, $\widetilde A^{(2)}_{k,n}$, and $\widetilde T_n$ are defined by
\begin{align*}
    (\widetilde A^{(1)}_n\varphi)(t) &:= -\frac{\mathrm i\eta}{2\pi}\sum_{j=0}^{2n-1} R^{(n)}_j(t) |\gamma^\prime(t^{(n)}_j)| \varphi(t^{(n)}_j),
    \\
    (\widetilde A^{(2)}_{k,n}\varphi)(t) &:= \frac{\pi}{n}\sum_{j=0}^{2n-1} H_k(t,t^{(n)}_j) \varphi(t^{(n)}_j),
    \\
    H_k(t,\tau) &:= 
    \biggl[
        2\mathrm e^{-\mathrm i\beta(t-\tau)/(2\pi)}
        (\partial_{\nu(y)}-\mathrm i\eta) \hat G_k(\gamma(t)-y)|_{y=\gamma(\tau)}
    + \frac{\mathrm i\eta}{2\pi}\mathcal E(t-\tau) 
    \biggr]
    |\gamma^\prime(\tau)|,
    \\
    (\widetilde T_n\varphi)_s &:= \frac{\pi\mathrm i}{nL} \sum_{j=0}^{2n-1}
        \left.(\partial_{\nu(y)} - \mathrm i\eta) \mathrm e^{-\mathrm ik_s y_1}\right|_{y=\gamma(t^{(n)}_j)}
        \mathrm e^{\mathrm i\beta t^{(n)}_j/(2\pi)}
        |\gamma^\prime(t^{(n)}_j)|
        \widetilde\varphi(t^{(n)}_j).
\end{align*}

\begin{proposition}\label{prop:holder-cts-Hk}
Let $K$ be a compact subset of $\mathbb R$, let $\eta>0$, and let $\sigma\subset\mathbb Z$ be a finite subset such that $K\subset (\mathbb R\setminus\Lambda)\cup \Lambda_\sigma$. Assume that $\gamma\in C^{2,\alpha}(\mathbb R)$ for some $\alpha\in (0,1]$.
Then the kernel $H_k$ admits a unique continuous extension to $\mathbb T\times\mathbb T$. Moreover, $(k,t,\tau)\mapsto H_k(t,\tau)$ is continuous on $K\times\mathbb T\times\mathbb T$, and $\sup_{k\in K} \|H_k\|_{C^{0,\alpha}(\mathbb T\times\mathbb T)} <\infty$.
\end{proposition}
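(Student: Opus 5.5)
We split $\hat G_k=G^S_k+(\hat G_k-G^S_k)$, as \cref{prop:ewald-split} suggests, and treat the two parts of $H_k$ separately. The smooth part is
\[
H^{\rm reg}_k(t,\tau):=2\mathrm e^{-\mathrm i\beta(t-\tau)/(2\pi)}(\partial_{\nu(y)}-\mathrm i\eta)(\hat G_k-G^S_k)(\gamma(t)-y)|_{y=\gamma(\tau)}|\gamma'(\tau)|.
\]
By \cref{prop:ewald-split}(ii), every $x$-derivative of $\hat G_k-G^S_k$ is jointly continuous on $K\times\mathbb R^2$, hence bounded and uniformly continuous on $K\times\overline{U}$ with $U\supset\{\gamma(t)-\gamma(\tau)\}$ bounded. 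Since $\gamma\in C^{2,\alpha}$ gives $\nu\circ\gamma\in C^{1,\alpha}$ and $|\gamma'|\in C^{1,\alpha}$, the function $H^{\rm reg}_k$ is $C^1$ in $(t,\tau)$ with bounds uniform in $k\in K$, and it is jointly continuous in $(k,t,\tau)$. The quasi-periodicity of $\hat G_k$ and $G^S_k$, together with the factor $\mathrm e^{-\mathrm i\beta(t-\tau)/(2\pi)}$, makes it $2\pi$-periodic in each variable.

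The singular part is
\[
2\mathrm e^{-\mathrm i\beta(t-\tau)/(2\pi)}(\partial_{\nu(y)}-\mathrm i\eta)G^S_k(\gamma(t)-y)|\gamma'(\tau)|+\tfrac{\mathrm i\eta}{2\pi}\mathcal E(t-\tau)|\gamma'(\tau)|.
\]
For $t,\tau$ in one period, only finitely many lattice terms $\ell$ come near the singularity. All other terms $\ell$ are smooth with Gaussian decay in $\ell$, uniformly in $k\in K$, and can be handled like $H^{\rm reg}_k$.

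Near the diagonal, take $\ell=0$ (the other near-singular $\ell$ are translates, handled identically via $|\gamma(t)-\gamma(\tau)-\ell Le_1|=|t-\tau-2\pi\ell|q_\ell$ as in \cref{prop:cts-lipschitz}). Write $x=\gamma(t)-\gamma(\tau)$ and $|x|=|t-\tau|\,q(t,\tau)$, where $q$ is $C^{1,\alpha}$ and $q>0$. Then $E_1(\zeta^2|x|^2)=-\gamma_{\rm E}-2\log(\zeta|t-\tau|)-2\log q+F(\zeta^2|x|^2)$ with $F$ entire. The single-layer term is $-\mathrm i\eta\cdot 2\cdot\frac1{4\pi}E_1(\zeta^2|x|^2)J_0(k|x|)$. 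The $\mathcal E$ term contributes $\frac{\mathrm i\eta}{2\pi}E_1(\zeta^2(t-\tau)^2)$. Their logarithms cancel exactly, leaving $\frac{\mathrm i\eta}{\pi}\log|t-\tau|\,(1-J_0(k|x|))$ plus $C^{1,\alpha}$ terms. Since $1-J_0(k|x|)=O(k^2|t-\tau|^2)$, this product is $C^{0,\alpha}$ (indeed $C^{0,1}$) uniformly for $k\in K$, and it extends continuously by $0$ at $t=\tau$. So the $C^{0,\alpha}$ bound on the full kernel $H_k$ reduces to controlling the double-layer term.

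The double-layer term is the main obstacle. We have
\[
\partial_{\nu(y)}G^S_k(x-y)=\frac{1}{4\pi}\Big[\frac{2\mathrm e^{-\zeta^2|x|^2}}{|x|^2}J_0(k|x|)\,(x\cdot\nu(y))+E_1(\zeta^2|x|^2)\,kJ_1(k|x|)\frac{x\cdot\nu(y)}{|x|}\Big].
\]
The second summand is $O(|x|^2\log|x|)$ times smooth factors, hence $C^{0,\alpha}$. The first is, up to a smooth factor, the Laplace double-layer kernel $\mathcal K(t,\tau)$. For it we reuse the Taylor argument from \cref{prop:cts-lipschitz}: the diagonal value is $\frac{\gamma''\cdot\nu}{2\pi|\gamma'|^2}$, and the cubic terms cancel. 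To obtain $C^{0,\alpha}$ in both variables jointly, rather than only in $t$, we will write
\[
\mathcal K(t,\tau)=\frac{\int_0^1\!\int_0^1 s\,\gamma''(\tau+rs(t-\tau))\,\mathrm dr\,\mathrm ds\cdot\nu(\gamma(\tau))}{\pi q(t,\tau)^2},
\]
using $(\gamma(t)-\gamma(\tau))\cdot\nu(\gamma(\tau))=(t-\tau)^2\int_0^1(1-s)\gamma''(\tau+s(t-\tau))\,\mathrm ds\cdot\nu$. This is manifestly a $C^{0,\alpha}$ function of $(t,\tau)$, because $\gamma''\in C^{0,\alpha}$ and $q\in C^{0,1}$ is bounded below.

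Joint continuity in $k$ comes from the continuity of $J_0,J_1$ in $k$ and from \cref{prop:ewald-split}(ii). Uniqueness of the continuous extension is immediate, because the off-diagonal set is dense.
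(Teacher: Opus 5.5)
Your proposal is correct and follows essentially the same route as the paper. The paper also peels off $\hat G_k-G^S_k$ via \cref{prop:ewald-split}(ii) and discards the Gaussian-decaying lattice terms. It then cancels the logarithm of the $n=0$ single-layer term against the $\ell=0$ term of $\mathcal E$, written as $\log|a|^2+h(\zeta^2(t-\tau)^2)-h(\zeta^2(t-\tau)^2|a|^2)$ plus $E_1(1-J_0)$, and controls $\nu\cdot r/|r|^2$ through the integral Taylor remainder of $\gamma''$, which is exactly your formula for $\mathcal K$. The only differences are cosmetic: you treat the $\ell=\pm1$ singularities as translates on $[0,2\pi]^2$, whereas the paper works in lifted coordinate neighbourhoods of the diagonal of $\mathbb T^2$; and your leftover term should read $-\frac{\mathrm i\eta}{\pi}\log(\zeta|t-\tau|)(1-J_0)$ rather than $+\frac{\mathrm i\eta}{\pi}\log|t-\tau|(1-J_0)$, which does not affect the argument.
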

\begin{proof}
    By \cref{prop:ewald-split} (ii), $(k,x)\mapsto \hat G_k(x)-G_k^S(x)$ and all its $x$-derivatives are continuous on $\bigl((\mathbb R\setminus\Lambda)\cup\Lambda_\sigma\bigr)\times\mathbb R^2$. Consequently, the contribution of $\hat G_k(x)-G_k^S$ to $H_k$ is jointly continuous in $(k,t,\tau)$ and is uniformly bounded in $C^{0,\alpha}(\mathbb T^2)$ for $k\in K$. 

    It remains to analyze the contributions of $G_k^S$ and $\mathcal E$.
    In view of the $2\pi$-periodicity of $H_k$, we first establish the required estimates in a coordinate neighborhood of an arbitrary point of the diagonal $\Delta:=\{(t,t):t\in\mathbb T\}$. Fix $\delta\in(0,\pi)$ and $(t_0,t_0)\in\Delta$, and choose a sufficiently small neighborhood $U$ of $(t_0,t_0)$ together with real lifts of $t$ and $\tau$, still denoted by $t$ and $\tau$, such that $|t-\tau|<\delta$ on $U$ and $t-\tau=0$ on $U\cap\Delta$.
    Define $r(t,\tau)=\gamma(t)-\gamma(\tau)$. In $U$, the only singular terms are the $n=0$ term in $G_k^S(r(t,\tau))$ and the $\ell=0$ term in $\mathcal E(t-\tau)$. We therefore write
    \begin{align*}
      G_k^S(r)
      &=
      \frac{1}{4\pi}
      E_1(\zeta^2|r|^2)J_0(k|r|)
      +
      G_{k,\mathrm{off}}^S(r),
      \\
      E(t-\tau)
      &=
      \mathrm e^{-\mathrm i\beta (t-\tau)/(2\pi)}
      E_1(\zeta^2(t-\tau)^2)
      +
      E_{\mathrm{off}}(t-\tau),
    \end{align*}
    where
    \begin{align*}
      G_{k,\mathrm{off}}^S(r)
      &:=
      \frac{1}{4\pi}
      \sum_{n\in\mathbb Z\setminus\{0\}}
      \mathrm e^{\mathrm in\beta}
      E_1\bigl(
        \zeta^2|r-nLe_1|^2
      \bigr)
      J_0\bigl(
        k|r-nLe_1|
      \bigr),
      \\
      E_{\mathrm{off}}(\theta)
      &:=
      \sum_{\ell\in\mathbb Z\setminus\{0\}}
      \mathrm e^{-\mathrm i\beta(\theta-2\pi\ell)/(2\pi)}
      E_1\bigl(
        \zeta^2(\theta-2\pi\ell)^2
      \bigr).
    \end{align*}
    For sufficiently small $U$, there exists $c_0>0$ such that $|r(t,\tau)-nLe_1| \ge c_0|n|$ and $|t-\tau-2\pi\ell|\ge c_0|\ell|$ for all $n,\ell\in\mathbb Z\setminus\{0\}$ and $(t,\tau)\in U$. The large-argument estimates for $E_1$, together with the
    compactness of $K$, therefore imply that, for every 
    $m\in\mathbb N\cup\{0\}$, there exist constants $C_m,c_m>0$ such that
    \begin{align*}
      &\sup_{\substack{
        k\in K,\,
        (t,\tau)\in U
      }}
      \left|
        D_x^m
        \left[
          E_1\bigl(
            \zeta^2|x-nLe_1|^2
          \bigr)
          J_0\bigl(
            k|x-nLe_1|
          \bigr)
        \right]_{x=r(t,\tau)}
      \right|
      \le
      C_m e^{-c_mn^2},
      \\
      &\sup_{(t,\tau)\in U}
      \left|
        \frac{d^m}{d\theta^m}
        \left[
          \mathrm e^{-\mathrm i\beta(\theta-2\pi\ell)/(2\pi)}
          E_1\bigl(
            \zeta^2(\theta-2\pi\ell)^2
          \bigr)
        \right]_{\theta=t-\tau}
      \right|
      \le
      C_m e^{-c_m\ell^2}.
    \end{align*}
    Hence the two off-diagonal series and all their differentiated series converge uniformly for $(k,t,\tau)\in K\times U$. Their contributions to $H_k$ are therefore continuous in $(k,t,\tau)$ and
    uniformly bounded in $C^{0,\alpha}$ for $k\in K$.
    
    It remains only to analyze the two local singular terms with $n=0$ and $\ell=0$, i.e., 
    \begin{align}
      \mathcal H_k^{\rm sing}(t,\tau)
      &:=
      \frac{
        e^{-i\beta (t-\tau)/(2\pi)}
        |\gamma'(\tau)|
      }{2\pi}
      \Bigl\{
        -\nu(\gamma(\tau))\cdot\nabla_x
        \bigl[
          E_1(\zeta^2|x|^2)J_0(k|x|)
        \bigr]_{x=r(t,\tau)}
      \notag \\ 
      &
        +i\eta
        \bigl[
          E_1(\zeta^2(t-\tau)^2)
          -
          E_1(\zeta^2|r|^2)J_0(k|r(t,\tau)|)
        \bigr]
      \Bigr\}.
      \label{eq:H-sing}
    \end{align}
    
    Define $a(t,\tau) := \int_0^1\gamma'(\tau+\theta (t-\tau))\mathrm d\theta$ for $(t,\tau)\in U$. Then $\inf_{(t,\tau)\in U}|a(t,\tau)|>0$ and $r(t,\tau)=(t-\tau)a(t,\tau)$. Furthermore, since
    $\nu(\gamma(\tau))\cdot\gamma'(\tau)=0$, we have
    \begin{align}
      \frac{\nu(\gamma(\tau))\cdot r(t,\tau)}{|r(t,\tau)|^2}
      =
      \frac{1}{|a(t,\tau)|^2}
      \int_0^1
      (1-\theta)
      \nu(\gamma(\tau))\cdot
      \gamma''(\tau+\theta (t-\tau))\mathrm d\theta.
      \label{eq:normal-ratio}
    \end{align}
    The right-hand side of \cref{eq:normal-ratio} extends to $t=\tau$ with value $\nu(\gamma(\tau))\cdot\gamma''(\tau)/(2|\gamma'(\tau)|^2)$. Since $\gamma\in C^{2,\alpha}$ and $|\gamma'|$ is bounded away from
    zero, this extension belongs to $C^{0,\alpha}$.
    
    A direct differentiation gives
    \begin{align}
      &-\nu(\gamma(\tau))\cdot\nabla_x
      \bigl[
        E_1(\zeta^2|x|^2)J_0(k|x|)
      \bigr]_{x=r(t,\tau)}
      \notag \\ 
      =&
      \frac{\nu(\gamma(\tau))\cdot r(t,\tau)}{|r(t,\tau)|^2}
      \Bigl[
        2\mathrm e^{-\zeta^2|r(t,\tau)|^2}J_0(k|r(t,\tau)|)
        +
        k|r(t,\tau)| J_1(k|r(t,\tau)|)E_1(\zeta^2|r(t,\tau)|^2)
      \Bigr].
      \label{eq:normal-derivative-singular}
    \end{align}
    The first term on the right-hand side of \cref{eq:normal-derivative-singular} is
    $C^{0,\alpha}$ by \cref{eq:normal-ratio}, and it is bounded uniformly for $k\in K$. To analyze the second term, we observe that $(k,\rho)\mapsto k\rho^{-1} J_1(k\rho)$ extends to an entire function on $\mathbb C^2$. In addition, we define 
    \begin{align*}
        F(\rho) :=
        \begin{cases}
            \rho^2 E_1(\zeta^2\rho^2) & \rho>0,
            \\
            0 & \rho=0.
        \end{cases}
    \end{align*}
    Then $F$ is locally Lipschitz since $F^\prime(\rho) = 2\rho E_1(\zeta^2\rho^2) - 2\rho \mathrm e^{-\zeta^2\rho^2} \to 0$ as $\rho\to 0$. Thus the second factor in
    \cref{eq:normal-derivative-singular} extends to a
    $C^{0,\alpha}$ function with uniform estimate in $k\in K$. 
    
    It remains to consider the second term on the right-hand side of \eqref{eq:H-sing}. Define $h(z):=E_1(z)+\gamma_{\rm E}+\log z$, which is analytic at $z=0$. Then we have
    \begin{align*}
      E_1(\zeta^2(t-\tau)^2)
      -
      E_1(\zeta^2|r(t,\tau)|^2)
      =
      \log|a(t,\tau)|^2
      +
      h(\zeta^2(t-\tau)^2)
      -
      h\bigl(
        \zeta^2(t-\tau)^2|a(t,\tau)|^2
      \bigr),
    \end{align*}
    and thus this function belongs to
    $C^{0,\alpha}(U)$. In addition, since $(k,\rho)\mapsto (1-J_0(k\rho))/\rho^2$ is entire, the function $E_1(\zeta^2\rho^2)(1-J_0(k\rho))$ also extends to a uniformly $C^{0,\alpha}$ function.

    Since the diagonal $\Delta$ is compact, it can be covered by finitely many coordinate neighborhoods $U_1,\ldots,U_N$ of the above type. The local extensions constructed on these neighborhoods agree on their overlaps, since they coincide with the original kernel off the diagonal, and the complement of the diagonal is dense. On the compact set away from the diagonal, all the kernels under consideration are continuous in $(k,t,\tau)$ and smooth in $(t,\tau)$, with the relevant derivatives uniformly bounded for $k\in K$. Consequently, these local extensions define a unique continuous extension of $H_k$ to $\mathbb T^2$, jointly continuous on $K\times\mathbb T^2$. Moreover, the finite covering and the uniform local bounds yield the asserted estimate. This completes the proof.
\end{proof}

\begin{lemma}\label{lemma:collective}
    Under the assumptions of \cref{prop:holder-cts-Hk}, the following assertions hold:
    \begin{enumerate}\renewcommand{\theenumi}{\roman{enumi}}
        \item the family $\{ \widetilde A_{k,n} : C(\mathbb T)\times\mathcal C_\sigma\to  C(\mathbb T)\times\mathcal C_\sigma \}_{k\in K,n\in\mathbb N}$ is collectively compact, i.e., the set
        \begin{align*}
            \{ \widetilde A_{k,n}(\varphi,c) : k\in K,\ n\in\mathbb N, \ (\varphi,c)\in C(\mathbb T)\times\mathcal C_\sigma,\ \|\varphi\|_{C(\mathbb T)}+|c|\le 1\}
        \end{align*}
        is relatively compact in $C(\mathbb T)\times\mathcal C_\sigma$,
        \item for each $\varphi\in C(\mathbb T)$ and $c\in\mathcal C_\sigma$, the following $k$-uniform convergence holds:
        \begin{align*}
            \sup_{k\in K} \|(\widetilde A_{k,n}-\widetilde A_k)(\varphi,c)\|_{C(\mathbb T)\times\mathcal C_\sigma} \xrightarrow[n\to\infty]{}0,
        \end{align*}
        \item there exists a constant $C>0$ such that
        \begin{align*}
            \sup_{k\in K} \|(\widetilde A_{k,n}-\widetilde A_k)(\varphi,c)\|_{C(\mathbb T)\times\mathcal C_\sigma}
        \le
        C n^{-\alpha}
        \left(
            \|\varphi\|_{C(\mathbb T)}
            +
            \operatorname{Lip}(\varphi)
        \right)
        \end{align*}
        for all $n\in\mathbb N$, $\varphi\in C^{0,1}(\mathbb T)$, and $c\in\mathcal C_\sigma$.
    \end{enumerate}
\end{lemma}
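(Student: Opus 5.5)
Our plan is to split the operator block by block and reduce everything to two facts: \cref{lemma:error-quadrature} for the explicitly singular part $\mathcal E$, and the uniform H\"older bound for the continuous kernel $H_k$ from \cref{prop:holder-cts-Hk}. First we check that $\widetilde A_k$ can be written in the same form as $\widetilde A_{k,n}$ with the sums replaced by integrals:
\begin{align*}
    \bigl((\widetilde D_k-\mathrm i\eta\widetilde S_k)\varphi\bigr)(t)
    = -\frac{\mathrm i\eta}{2\pi}\int_0^{2\pi}\mathcal E(t-\tau)|\gamma'(\tau)|\varphi(\tau)\,\mathrm d\tau + \int_0^{2\pi}H_k(t,\tau)\varphi(\tau)\,\mathrm d\tau .
\end{align*}
This follows directly from the definition of $H_k$. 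Likewise, $\widetilde T$ is the integral whose trapezoidal rule is $\widetilde T_n$. The blocks $\widetilde R$ and the diagonal block are identical in both operators. They are finite rank, and they are continuous in $k$ on the compact set $K$ since $k\mapsto\sqrt{k^2-k_s^2}$ is continuous.

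For (i), we treat the three finite-rank families separately and use the fact that a finite sum of collectively compact families is again collectively compact. For $\widetilde A^{(2)}_{k,n}$ and $\widetilde T_n$ (trapezoidal rules), the images are bounded by $2\pi\sup_K\|H_k\|_\infty\|\varphi\|_\infty$. They are also equicontinuous in $t$, because $H_k$ is uniformly $C^{0,\alpha}$ in $(t,\tau)$, uniformly in $k$. Arzel\`a--Ascoli then gives relative compactness; the $\mathcal C_\sigma$ components are bounded in a finite-dimensional space. For $\widetilde A^{(1)}_n$ we use the identity from the proof of \cref{lemma:error-quadrature}:
\begin{align*}
    (\widetilde A^{(1)}_n\varphi)(t) = -\frac{\mathrm i\eta}{2\pi}\int_0^{2\pi}\mathcal E(t-\tau)\,P_n\bigl(|\gamma'|\varphi\bigr)(\tau)\,\mathrm d\tau .
\end{align*}
Here $\|P_n f\|_{L^2}\le\sqrt{2\pi}\|f\|_\infty$. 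Hence the image of the unit ball is contained in $\{\mathcal E\ast u:\|u\|_{L^2}\le C\}$. By Cauchy--Schwarz this set is bounded in sup norm, and it is equicontinuous because $\|\mathcal E(\cdot+h)-\mathcal E\|_{L^2}\to0$ as $h\to0$. Relative compactness follows again.

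For (ii) and (iii), the $\widetilde A^{(1)}$ part is handled by \cref{lemma:error-quadrature} applied to $f=|\gamma'|\varphi$. This $f$ is independent of $k$, so the convergence in (ii) is trivially uniform in $k$. For Lipschitz $\varphi$, $\operatorname{Lip}(f)\le C(\|\varphi\|_\infty+\operatorname{Lip}\varphi)$ since $\gamma\in C^{2,\alpha}$, which gives a rate $n^{-1}\le n^{-\alpha}$. The $\widetilde T$ part is a trapezoidal rule applied to a Lipschitz (respectively continuous) $2\pi$-periodic integrand that does not depend on $k$ apart from $k_s$, which is fixed. Its error is $O(n^{-1})\,\mathrm{Lip}$, respectively $o(1)$.

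The main step is the $H_k$ part. For each fixed $t$ we estimate
\begin{align*}
    \Bigl|\int_0^{2\pi}H_k(t,\tau)\varphi(\tau)\,\mathrm d\tau-\frac{\pi}{n}\sum_{j}H_k(t,t_j)\varphi(t_j)\Bigr| \le 2\pi\,\sup_{|\tau-t_j|\le\pi/n}|H_k(t,\tau)\varphi(\tau)-H_k(t,t_j)\varphi(t_j)| .
\end{align*}
This is bounded by $C\bigl(\|H_k\|_{C^{0,\alpha}}\|\varphi\|_\infty n^{-\alpha}+\|H_k\|_\infty\operatorname{Lip}(\varphi)n^{-1}\bigr)$, uniformly in $k\in K$ and $t$ by \cref{prop:holder-cts-Hk}. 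This gives (iii). For (ii) with merely continuous $\varphi$, the same bound with the modulus of continuity of $\varphi$ in place of $\operatorname{Lip}(\varphi)$ gives uniform convergence in $(k,t)$. Alternatively, (ii) follows from (iii) by density of Lipschitz functions, combined with the uniform boundedness $\sup_{k,n}\|\widetilde A_{k,n}\|<\infty$ obtained in (i).

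We expect the only delicate point to be checking that the splitting of the kernel into the $\mathcal E$ part and the $H_k$ part is exact, including the quasi-periodic phase factors. Everything else is a uniform version of standard Nystr\"om estimates.
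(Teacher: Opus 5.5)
Your proposal is correct and follows essentially the same route as the paper. For (i) you use Arzel\`a--Ascoli, with the representation $\sum_j R^{(n)}_j(t)f(t^{(n)}_j)=(\mathcal E\ast P_nf)(t)$ controlling $\widetilde A^{(1)}_n$ and the uniform bounds on $H_k$ from \cref{prop:holder-cts-Hk} controlling $\widetilde A^{(2)}_{k,n}$; for (ii)--(iii) you combine \cref{lemma:error-quadrature} with the trapezoidal-rule estimate for the uniformly $C^{0,\alpha}$ kernel $H_k$. You also fill in details the paper leaves implicit, namely the exact kernel splitting $\widetilde D_k-\mathrm i\eta\widetilde S_k = -\frac{\mathrm i\eta}{2\pi}\mathcal E\ast(|\gamma'|\,\cdot\,)+\int H_k$ and the explicit $n^{-\alpha}\|\varphi\|_{C(\mathbb T)}+n^{-1}\operatorname{Lip}(\varphi)$ quadrature bound.
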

\begin{proof}
    Assertion (i) is a direct consequence of the Arzel\`a--Ascoli theorem. Indeed, by the proof of \cref{lemma:error-quadrature}, we have the uniform bound
    \begin{align*}
        \sup_{\|\varphi\|_{C(\mathbb T)}\le 1} \| \widetilde A^{(1)}_n\varphi\|_{C(\mathbb T)} \le (2\pi)^{-1/2} \eta \| \mathcal E\|_{L^2(\mathbb T)} \| \gamma^\prime\|_{C(\mathbb T)} \quad\text{for all }n\in\mathbb N
    \end{align*}
    and the equicontinuity estimate
    \begin{align*}
        |(\widetilde A^{(1)}_n\varphi)(t)-(\widetilde A^{(1)}_n\varphi)(\tau)|
        &\le
        (2\pi)^{-1/2}\eta\|\gamma^\prime\|_{C(\mathbb T)}
        \omega_{\mathcal E}(d(t,\tau)) \xrightarrow[t-\tau\to 0]{} 0,
    \end{align*}
    where $d$ is the distance function on $\mathbb T$ and
    \begin{align*}
        \omega_{\mathcal E}(\delta) := \sup_{d(t,\tau)\le \delta} \| \mathcal E(t-\cdot) - \mathcal E(\tau-\cdot) \|_{L^2(\mathbb T)},
    \end{align*}
    which implies the collective compactness of the family $\{ \widetilde A^{(1)}_n\}_{n\in\mathbb N}$. Since the kernel of $\widetilde T$ is continuous on $\mathbb T$, its approximation by the trapezoidal rule, $\widetilde T_n$, forms a collectively compact family. The same argument applies to $\{ \widetilde A^{(2)}_{k,n} \}$. By \cref{prop:holder-cts-Hk}, $(k,t,\tau)\mapsto H_k(t,\tau)$ is continuous on $((\mathbb R\setminus\Lambda)\cup \Lambda_\sigma)\times \mathbb T\times\mathbb T$. Thus the family $\{ \widetilde A^{(2)}_{k,n}\}_{k\in K, n\in\mathbb N}$ is also collectively compact. 

    Assertions (ii) and (iii) follow from \cref{lemma:error-quadrature,prop:holder-cts-Hk} with the convergence rate of the quadrature rule for $C^{0,\alpha}(\mathbb T)$ functions.
\end{proof}

Here we summarize the general theory of collectively compact approximation:
\begin{theorem}[Anselone--Moore \cite{anselone1964approximate}, Kress \cite{kress2014linear}]\label{thm:kress-nystrom}
    Let $X$ be a Banach space, let $K$ be a nonempty compact metric space, let $\{A_k:X\to X\}_{k\in K}$ be a family of compact linear operators such that $I+A_k$ is injective for all $k\in K$ and $k\mapsto A_k$ is continuous in operator norm. Let $\{A_{k,n}:X\to X\}_{k\in K,n\in\mathbb N}$ be a collectively compact family such that for every $\varphi\in X$,
    \begin{align*}
        \sup_{k\in K} \| A_{k,n}\varphi - A_k\varphi \| \xrightarrow[n\to\infty]{} 0.
    \end{align*}
    Let $\{f_k\}_{k\in K}\subset X$.
    Then there exist $N\in \mathbb N$ and $C>0$ such that
    \begin{align*}
        \begin{cases}
            I+A_{k,n} \text{ is invertible}, &
            \\
            \| (I+A_{k,n})^{-1}\| \le C, &
            \\
            \| \varphi_{k}-\varphi_{k,n}\|_X \le C \| (A_k - A_{k,n})\varphi_k\|_X &
        \end{cases}
        \text{ for all }n\ge N \text{ and }k\in K,
    \end{align*}
    where $\varphi_k:=(I+A_k)^{-1}f_k$ and $\varphi_{k,n} := (I+A_{k,n})^{-1}f_k$. Moreover, the condition number $\operatorname{cond}(I+A_{k,n}):=\|I+A_{k,n}\| \|(I+A_{k,n})^{-1}\|$ is bounded uniformly for $k\in K$ and $n\ge N$.
\end{theorem}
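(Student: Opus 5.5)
The plan is the classical Anselone--Moore argument, made uniform in the parameter $k$ by compactness of $K$ and continuity of $k\mapsto A_k$. First I show that $I+A_k$ is invertible with $\sup_{k\in K}\|(I+A_k)^{-1}\|<\infty$. By the Riesz theory, injectivity and compactness give invertibility for each $k$. The map $k\mapsto I+A_k$ is norm-continuous, and inversion is continuous on the open set of invertible operators. Hence $k\mapsto (I+A_k)^{-1}$ is continuous on the compact set $K$, and therefore bounded, say by $C_0$.

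Next, the key estimate is
\begin{align*}
\sup_{k\in K}\|(A_k-A_{k,n})A_{k,n}\|\to 0 \quad\text{as } n\to\infty.
\end{align*}
Let $U$ be the closure of $\{A_{k,n}\varphi : k\in K,\ n\in\mathbb N,\ \|\varphi\|\le1\}$, which is compact by collective compactness. I will show that $\sup_{k\in K,\,\psi\in U}\|(A_k-A_{k,n})\psi\|\to0$. The family $\{A_{k,n}\}$ is uniformly bounded, since the set above is bounded. The family $\{A_k\}$ is uniformly bounded by continuity on $K$. Pointwise uniform-in-$k$ convergence, combined with uniform boundedness, upgrades to uniform convergence on the compact set $U$ by a finite $\epsilon$-net argument. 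This step is the only delicate one, and it is where the collective compactness and the $k$-uniform pointwise convergence are both used.

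Then I use the algebraic identity
\begin{align*}
(I+A_k)^{-1}(I-A_{k,n})\,\text{-type manipulations}: \quad B_{k,n}:=I+(I+A_k)^{-1}(A_k-A_{k,n})A_{k,n}.
\end{align*}
One checks directly that $B_{k,n}=(I+A_k)^{-1}(I-A_k+A_{k,n}+\dots)$, and more precisely that
\begin{align*}
\bigl(I-(I+A_k)^{-1}A_{k,n}\bigr)\,\text{acts as a left approximate inverse, so } (I+A_k)^{-1}(I+A_k-A_{k,n}+\ldots)(I+A_{k,n})=B_{k,n}.
\end{align*}
This is Kress's standard form (Theorem 10.9). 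Choose $N$ so that $C_0\sup_k\|(A_k-A_{k,n})A_{k,n}\|\le 1/2$ for $n\ge N$. A Neumann series then gives $\|B_{k,n}^{-1}\|\le2$. Consequently $I+A_{k,n}$ is injective; it is Fredholm of index zero because $A_{k,n}$ is compact, so it is invertible. This yields
\begin{align*}
\|(I+A_{k,n})^{-1}\|\le 2\bigl(1+C_0\|A_{k,n}\|\bigr)\le C,
\end{align*}
uniformly in $k\in K$ and $n\ge N$.

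The error bound follows from
\begin{align*}
\varphi_k-\varphi_{k,n}=(I+A_{k,n})^{-1}(A_{k,n}-A_k)\varphi_k,
\end{align*}
which gives $\|\varphi_k-\varphi_{k,n}\|\le C\|(A_k-A_{k,n})\varphi_k\|$. Finally, $\|I+A_{k,n}\|\le 1+\sup\|A_{k,n}\|$ is bounded uniformly, and together with the bound on the inverse this bounds the condition number.
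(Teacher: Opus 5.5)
Your proposal is correct and is the classical Anselone--Moore argument as presented in Kress, which the paper only cites. You make it uniform in $k$ through $\sup_{k\in K}\|(I+A_k)^{-1}\|<\infty$ and the $\epsilon$-net upgrade giving $\sup_{k\in K}\|(A_k-A_{k,n})A_{k,n}\|\to 0$. The only blemish is the garbled display of the key identity, which should read exactly $\bigl(I-(I+A_k)^{-1}A_{k,n}\bigr)(I+A_{k,n})=I+(I+A_k)^{-1}(A_k-A_{k,n})A_{k,n}$, with no further terms; your bound $\|(I+A_{k,n})^{-1}\|\le 2\bigl(1+C_0\|A_{k,n}\|\bigr)$ then follows as you state.
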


\subsection{Truncation of the series in $H_k$}
While the sequence $(\widetilde A_{k,n})$, constructed in the previous subsection, suffices to approximate a solution to $(Q_k)$, we wish to further approximate the operators by truncating the series defining $H_k$ for numerical purposes. This can be done using the locally $k$-uniform exponential decay of the summands. 
\begin{lemma}\label{lemma:truncation}
    Under the assumptions of \cref{prop:holder-cts-Hk}, there exist constants $C>0$ and $\xi>0$ such that 
    \begin{align*}
        \sup_{k\in K} \sup_{t,\tau\in \mathbb T} \left| H_{k}(t,\tau) - H_{k,m}(t,\tau) \right| \le C\mathrm e^{-\xi m} \quad\text{for all }m\in\mathbb N.
    \end{align*}
\end{lemma}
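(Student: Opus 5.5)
The approach is to write $H_k$ out explicitly through the Ewald representation and compare it term by term with $H_{k,m}$. Using \cref{prop:ident-G-GE} together with the definition of $\hat G_k$, the decomposition in the proof of \cref{prop:ewald-split}~(ii) gives
\[
\hat G_k = \Xi_k + W_{k,\infty}, \qquad \Xi_k = G^S_k + (\Xi_k - G^S_k).
\]
Here $W_{k,\infty}$ denotes the untruncated version of $W_{k,m}$, with the sum taken over all $s\notin\sigma$. Substituting this into the definition of $H_k$ and combining it with $\frac{\mathrm i\eta}{2\pi}\mathcal E(t-\tau)|\gamma'(\tau)|$ yields an expression for $H_k$ of exactly the shape of $H_{k,m}$, with three changes:
\begin{enumerate}
\item the sum over $s$ runs over all $s\notin\sigma$ instead of $|s|\le m$;
\item the window $\chi\bigl((t-\tau-2\pi\ell)/(2\pi m)\bigr)$ is replaced by $1$;
\item the $j$-sum $\sum_{j=1}^m$ coming from $\mathcal H_k$ in \cref{eq:Xi-GS} runs to $\infty$.
\end{enumerate}
One must also check that $G^S_k$ carries the $J_0$ factor, with $E_1(\zeta^2|x-nLe_1|^2)$ indexed by $n=\ell$ via the identification $\gamma(t)-\gamma(\tau)-\ell Le_1 = \gamma(t-2\pi\ell)-\gamma(\tau)$. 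The prefactor signs are bookkeeping. Hence $H_k-H_{k,m}$ splits into three tails, and it suffices to bound each by $Ce^{-\xi m}$ uniformly in $k\in K$ and $(t,\tau)\in\mathbb T^2$, including the diagonal.

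\textbf{Spectral tail.} The first tail is the sum over $|s|>m$, $s\notin\sigma$, of the erfc terms and their first derivatives in $x$ (normal derivative), evaluated at $x=r(t,\tau)$, which ranges over a compact set. Since $K$ is compact and $\sigma$ contains every index where $k^2-k_s^2$ can vanish, we have $\sqrt{k^2-k_s^2}=\mathrm i\sqrt{k_s^2-k^2}$ with $\sqrt{k_s^2-k^2}\ge c|s|$ for $|s|>m\ge M_0$. The argument of erfc then has real part of order $|s|/(2\zeta)$ up to a bounded shift. The estimate $|\operatorname{erfc}(w)|\le C|e^{-w^2}|/|w|$ used in the proof of \cref{prop:ewald-split}, together with the bounded growth of the prefactor exponentials on compacts, gives summands bounded by $Ce^{-cs^2}$. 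The tail is therefore $O(e^{-cm^2})$; for the finitely many small $m$, one simply enlarges $C$.

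\textbf{Spatial tail.} The second tail consists of the terms with $|t-\tau-2\pi\ell|>2\pi m$, where $\chi<1$. Fix representatives $t,\tau\in[0,2\pi)$, so these terms have $|\ell|\gtrsim m$. For them, $|\gamma(t)-\gamma(\tau)-\ell Le_1|\ge c|\ell|$, and the large-argument bounds for $E_1$ and $J_0$ (as in the proof of \cref{prop:holder-cts-Hk}) together with their first derivatives give $Ce^{-c\ell^2}$. The same holds for the bracket containing $e^{-\zeta^2|x|^2}$ times the polynomial sum, and for the $E_1(\zeta^2(t-\tau-2\pi\ell)^2)$ term. Here $0\le 1-\chi\le 1$. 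Summing over $|\ell|\gtrsim m$ gives $O(e^{-cm^2})$.

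\textbf{$j$-tail (main obstacle).} The third tail is the one I expect to be the main obstacle, because it must hold uniformly up to the diagonal and for $\ell=0$, where no Gaussian decay in $\ell$ helps. Writing $z=\zeta^2|x|^2$, the tail is
\[
\sum_{\ell} \chi(\cdot)\, e^{-z}\sum_{j>m}\frac{1}{(j!)^2}\Bigl(\frac{k}{2\zeta}\Bigr)^{2j}\sum_{q=0}^{j-1}(j-1-q)!\,(-z)^q,
\]
together with its gradient in $x$. Only $|\ell|\le 2m+1$ contributes, and $|x|\le C(1+|\ell|)$.

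I would first bound the inner sum crudely by $j!\,e^{|z|}$, using $(j-1-q)!\le j!/q!$. This gives the estimate
\[
\frac{(k/2\zeta)^{2j}}{j!}\, e^{|z|-z} = \frac{(k/2\zeta)^{2j}}{j!}
\]
for real $z\ge0$. The $j$-tail is then at most $C_K/m!\le Ce^{-\xi m}$, uniformly in $x$. The gradient produces an extra factor $|x|\,|z|^{q-1}$, handled in the same way with a polynomial loss in $|x|$. Summing over the $O(m)$ values of $\ell$ then costs only a factor $m\cdot\mathrm{poly}(m)$, which is absorbed by $1/m!$.

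Adding the three tails gives $\sup_{k\in K}\sup_{t,\tau}|H_k-H_{k,m}|\le Ce^{-\xi m}$.
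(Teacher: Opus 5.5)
Your three-way split of $H_k-H_{k,m}$ into a reciprocal-space tail, a window tail and a $j$-tail is exactly the paper's. Your reciprocal-space and $j$-tail estimates go through. For the $j$-tail, using $(j-1-q)!\le j!/q!$ and paying a factor $|x_\ell|\lesssim m$ on the $O(m)$ active indices $\ell$ is a harmless variant of the paper's bound. The paper uses $(j-1-q)!/j!\le 1/(q+1)!$, which bounds the $j$th summand and $\sqrt z$ times its $z$-derivative by $CA^j/j!$ uniformly in $z\ge0$.

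The gap is in the window tail. There you assert that the Gaussian bound holds ``the same'' for the bracket $e^{-z}\mathcal H_k(z)$, with $z=\zeta^2|x_\ell|^2$. In $H_k$ this bracket carries the full infinite $j$-sum, so it is not a Gaussian times a polynomial. The termwise estimates at hand do not give decay:
\begin{itemize}
\item Your own inequality $(j-1-q)!\le j!/q!$ yields only $|e^{-z}\mathcal H_k(z)|\le e^{a_k}-1$, where $a_k=k^2/(4\zeta^2)$.
\item The estimate in the proof of \cref{prop:ewald-split} yields $(e^{a_k}-1)/z=O(\ell^{-2})$. Summed over $|\ell|\ge m$, this gives an $O(1/m)$ window tail rather than $O(\mathrm e^{-\xi m})$.
\item The bound $(j-1-q)!\le (j-1)!$ yields $e^{-(1-a_k)z}/z$, which is useless once $|k|\ge 2\zeta$.
\end{itemize}

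The decay comes from cancellation between $E_1J_0$ and $e^{-z}\mathcal H_k$, so do not split them. By the recurrence for $E_j$ (the termwise form of \cref{eq:Xi-GS}) and Tonelli as in the proof of \cref{prop:ident-G-GE},
$\mathcal R_k(x):=E_1(z)J_0(k|x|)+e^{-z}\mathcal H_k(z)=\sum_{j\ge0}\frac{a_k^j}{j!}E_{j+1}(z)=\int_1^\infty t^{-1}e^{-zt+a_k/t}\,dt$.
Hence $|\mathcal R_k(x)|\le e^{a_k}e^{-z}/z$, and
$|\nabla_x\mathcal R_k(x)|=2\zeta^2|x|\int_1^\infty e^{-zt+a_k/t}\,dt\le 2\zeta^2|x|\,e^{a_k}e^{-z}/z$,
uniformly for $k\in K$. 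This is precisely how the paper bounds the window tail, and with it your argument is complete.
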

\begin{proof}
All constants below are independent of
$k\in K$, $t,\tau\in\mathbb T$, and $m\in\mathbb N$.
Define $u_\ell:=t-\tau-2\pi\ell$, $x_\ell:=\gamma(t)-\gamma(\tau)-\ell Le_1$, and $\chi_{\ell,m}:=\chi({u_\ell}/(2\pi m))$.
By \cref{prop:ident-G-GE} and the recurrence formula for $E_j$, the kernel $H_k$ is obtained from the formula defining $H_{k,m}$ by replacing
the truncated reciprocal-space sum by the full sum, replacing
$\chi_{\ell,m}$ by $1$, and replacing the finite $j$-sum by the
corresponding infinite sum.

We first consider the reciprocal-space tail. Since $K$ is compact
and $\sigma$ contains every Wood-anomaly index occurring in $K$,
for all sufficiently large $|s|$ we have a constant $c>0$ such that $\mu_s(k):=-\mathrm i\sqrt{k^2-k_s^2} \ge c|s|$ for all $k\in K$ and sufficiently large $|s|$. Define $V:=\{ x-y:x,y\in\Gamma\}$. Using $\operatorname{erfc}(r) \le \mathrm e^{-r^2} / (\sqrt{\pi} r)$ for $r>0$ and $\operatorname{erfc}'(z)=-2e^{-z^2}/\sqrt{\pi}$, we obtain,
for the reciprocal-space summands $q_s(k,x)$ occurring in $W_k$,
\begin{align*}
  \sup_{\substack{k\in K,x\in V}}
  \left(
    |q_s(k,x)|
    +
    |\nabla_xq_s(k,x)|
  \right)
  &\le
  Ce^{-cs^2}.
\end{align*}
Hence
\begin{align}
\label{eq:tmp:58}
  \sup_{\substack{k\in K, x\in V}}
  \left(
    |W_k(x)-W_{k,m}(x)|
    +
    |\nabla_x(W_k-W_{k,m})(x)|
  \right)
  &\le
  Ce^{-cm^2}.
\end{align}

For the real-space terms, define $a_k := k^2/(2\zeta)^2$, $z:=\zeta^2|x|^2$, and 
\begin{align*}
  \mathcal R_k(x)
  &:=
  \int_1^\infty
  t^{-1}
  \exp\left(
    -zt+\frac{a_k}{t}
  \right)\,dt.
\end{align*}
The integral representation gives, uniformly for $k\in K$ and
$|x|\ge1$,
\begin{align}
\label{eq:tmp:59}
  |\mathcal R_k(x)|
  +
  |\nabla_x\mathcal R_k(x)|
  &\le
  Ce^{-c|x|^2}.
\end{align}
The same estimate holds for $E_1(\zeta^2u^2)$ when $|u|\ge1$. Moreover, $|u_\ell| \ge 2\pi|\ell|-2\pi$ and $|x_\ell|\ge L|\ell|-C$. Since $1-\chi_{\ell,m}\ne0$ implies $|u_\ell|>2\pi m$,
\cref{eq:tmp:59} yields
\begin{align}
\label{eq:tmp:510}
  \sum_{\ell\in\mathbb Z}
  |1-\chi_{\ell,m}|
  \left[
    E_1(\zeta^2u_\ell^2)
    +
    |\mathcal R_k(x_\ell)|
    +
    |\nabla_x\mathcal R_k(x_\ell)|
  \right]
  &\le
  Ce^{-cm^2}.
\end{align}

It remains to estimate the truncation in $j$. Let
$\mathcal R_{k,m}$ denote the expression used in $H_{k,m}$, namely
\begin{align*}
  \mathcal R_{k,m}(x)
  &:=
  E_1(z)J_0(k|x|) +
  \mathrm e^{-z}
  \sum_{j=1}^m
  \frac{a_k^j}{(j!)^2}
  \sum_{q=0}^{j-1}
  (j-1-q)!(-z)^q.
\end{align*}
The recurrence formula for $E_j$ gives
\begin{align*}
  \mathcal R_k(x)-\mathcal R_{k,m}(x)
  =
  \sum_{j=m+1}^\infty h_{j,k}(z),
  \quad
  h_{j,k}(z)
  :=
  e^{-z}
  \frac{a_k^j}{(j!)^2}
  \sum_{q=0}^{j-1}
  (j-1-q)!(-z)^q.
\end{align*}
Since $(j-1-q)!/j! \le 1/(q+1)!$, we have
\begin{align*}
  |h_{j,k}(z)|
  +
  \sqrt z\,|\partial_zh_{j,k}(z)|
  &\le
  C\frac{A^j}{j!},
  \qquad z\ge0,
\end{align*}
where $A:=\sup_{k\in K}a_k<\infty$. Consequently,
\begin{align}
\label{eq:tmp:511}
  \sup_{\substack{k\in K,x\in\mathbb R^2}}
  \left(
    |\mathcal R_k(x)-\mathcal R_{k,m}(x)|
    +
    |\nabla_x(\mathcal R_k-\mathcal R_{k,m})(x)|
  \right)
  \le
  C\sum_{j=m+1}^\infty\frac{A^j}{j!}
  \le
  Ce^{-cm}.
\end{align}
There are at most $Cm$ indices $\ell$ for which $\chi_{\ell,m}\ne0$. Thus \cref{eq:tmp:511} implies that the total contribution of the $j$-truncation is bounded by $Cm e^{-cm} \le Ce^{-\xi m}$ for some $\xi>0$.

Combining \cref{eq:tmp:58,eq:tmp:510,eq:tmp:511}, and using the boundedness of
$\gamma'$, $\nu$, and the phase factors, we obtain the asserted estimate.
\end{proof}

We now prove the second result.
\begin{proof}[Proof of \cref{thm:main-2}]
    By \cref{lemma:collective,thm:kress-nystrom}, there exist $N\in\mathbb N$ and $C>0$ such that
    \begin{align}
    \label{eq:main-2:Akn}
        \begin{cases}
            I+\widetilde A_{k,n} \text{ is invertible}, &
            \\
            \| (I+\widetilde A_{k,n})^{-1} \| \le C, &
            \\
            \|\widetilde\varphi_{k,n} - \widetilde\varphi_k\|_{C(\mathbb T)} + |c_k-c_{k,n}| \le C \| (\widetilde A_k - \widetilde A_{k,n})( \widetilde\varphi_k,c_k) \|_{C(\mathbb T)\times \mathcal C_\sigma},&
            \\
            \operatorname{cond}(I+\widetilde A_{k,n}) \le C &
        \end{cases}
    \end{align}
    for all $n\ge N$ and $k\in K$, where $(\widetilde\varphi_k,c_k) := (I+\widetilde A_k)^{-1}(2\widetilde g_k,0)$ and $(\widetilde\varphi_{k,n},c_{k,n}) = (I+\widetilde A_{k,n})^{-1}(2\widetilde g_k,0)$. The mapping property $\widetilde A_k:C^{0,1}(\mathbb T)\times \mathcal C_\sigma\to C^{0,1}(\mathbb T)\times \mathcal C_\sigma$, its compactness, and the continuity of $k\mapsto \widetilde A_k\in \mathcal L(C^{0,1}(\mathbb T)\times\mathcal C_\sigma,C^{0,1}(\mathbb T)\times\mathcal C_\sigma)$ follow from \cref{prop:cts-lipschitz}. Thus we can use the assertion (iii) of \cref{lemma:collective} and obtain
    \begin{align*}
        \sup_{k\in K} \left( \|\widetilde\varphi_{k,n} - \widetilde\varphi_k\|_{C(\mathbb T)} + |c_k-c_{k,n}| \right) \le C n^{-\alpha} \sup_{k\in K}  \left( \| \widetilde\varphi_k \|_{C(\mathbb T)} + \operatorname{Lip}(\widetilde\varphi_k) \right) = C^\prime n^{-\alpha}
    \end{align*}
    for all $n\ge N$, where $C^\prime := C\sup_{k\in K}  \left( \| \widetilde\varphi_k \|_{C(\mathbb T)} + \operatorname{Lip}(\widetilde\varphi_k) \right)<+\infty$. 
    
    Let $\widetilde A_{k,n,m}$ denote the compact operator obtained by replacing the kernel $H_k$ with $H_{k,m}$ in $\widetilde A_{k,n}$. By \cref{lemma:truncation}, there exist $C,\xi>0$ such that
    \begin{align*}
        \sup_{k\in K,n\in\mathbb N} \| \widetilde A_{k,n,m} - \widetilde A_{k,n} \| \le C\mathrm e^{-\xi m} \quad\text{for all }m\in\mathbb N.
    \end{align*}
    Thus, by the standard Neumann series argument, \cref{eq:main-2:Akn} still holds for all $k\in K$, $n\ge N$, and $m\ge M$ even if $\widetilde A_{k,n}$ and $(\widetilde\varphi_{k,n},c_{k,n})$ are replaced by $\widetilde A_{k,n,m}$ and $(\widetilde\varphi_{k,n,m},c_{k,n,m}):=(I+\widetilde A_{k,n,m})^{-1}(2\widetilde g_k,0)$, respectively. Using \cref{lemma:collective} (iii) and $c_{k,n,m} = c_k^{(n,m)}$, we have
    \begin{align*}
        &\| (\varphi_{k}^{(n,m)},c_{k}^{(n,m)}) -  (\varphi_{k},c_{k}) \|_{C(\Gamma_\mathrm{per})\times\mathcal C_\sigma} = \| (\widetilde\varphi_{k,n,m},c_{k,n,m}) -  (\widetilde\varphi_{k},c_{k}) \|_{C(\mathbb T)\times\mathcal C_\sigma} 
        \\
        &\le C
        \left( 
            \| (\widetilde A_{k} - \widetilde A_{k,n} ) (\widetilde\varphi_{k},c_{k}) \|_{C(\mathbb T)\times\mathcal C_\sigma}
            + 
            \|(\widetilde A_{k,n} - \widetilde A_{k,n,m} ) (\widetilde\varphi_{k},c_{k}) \|_{C(\mathbb T)\times\mathcal C_\sigma}
        \right)
        \\
        &\le C( n^{-\alpha} + \mathrm e^{-\xi m})
    \end{align*}
    for all $k\in K$, $n\ge N$, and $m\ge M$.

    By the interpolation formula of the Nystr\"om approximation (cf. \cite[Theorem 12.7]{kress2014linear}), we confirm the equality $\widetilde\varphi_{k,n,m}(t^{(n)}_j) = (\widetilde\varphi_{k}^{(n,m)})_j$ and the existence of uniformly bounded linear operators $R_n:C(\mathbb T)\times\mathcal C_\sigma\to \mathbb C^{2n}\times\mathcal C_\sigma$ and $M_n:\mathbb C^{2n}\times\mathcal C_\sigma \to C(\mathbb T)\times\mathcal C_\sigma$ such that $B_k^{(n,m)} = R_n (I+\widetilde A_{k,n,m}) M_n$ and $(B_k^{(n,m)})^{-1} = R_n (I+\widetilde A_{k,n,m})^{-1} M_n$ (See \cite[\S 14.1]{kress2014linear}). Therefore $\sup_{k\in K,n\ge N,m\ge M} \operatorname{cond}(B_k^{(n,m)}) < +\infty$. This concludes the proof.
\end{proof}

\section{Numerical examples}
We present some numerical examples to illustrate the convergence of the proposed scheme. Throughout this section, we set $L=2\pi$, $\eta=1$, $\zeta=1$, and $m=10$.

\subsection{Flat surface}
We first consider the flat surface given by $\gamma(t) = (Lt/(2\pi),0)$ for $t\in\mathbb R$. In addition, the Dirichlet data is set to $g(x) = \sum_{s=-1}^1 a_s \mathrm e^{\mathrm i s x}$ for all $x\in\mathbb R$ with $a_{-1}:=0.35 - 0.1\mathrm i$, $a_0:=1+ 0.2\mathrm i$, and $a_1:=-0.25 - 0.4\mathrm i$.  Then the assumption of \cref{thm:kirsch} is satisfied, and thus $(Q_k)$ is uniquely solvable by \cref{thm:main-1}. Under the flat-surface setting, the exact solution of $(Q_k)$ is given by
\begin{align*}
    \varphi(x) = \sum_{s=-1}^1 \frac{2\sqrt{k^2-k_s^2} }{\sqrt{k^2-k_s^2} + \eta} a_s \mathrm e^{\mathrm is x}\ \text{for }x\in\mathbb R \quad\text{and}\quad c_s = \frac{2\eta }{\sqrt{k^2-k_s^2} + \eta} a_s \ \text{for }s\in\sigma.
\end{align*}

For fixed $\beta=0$ and $\sigma=\{-1,1,2\}$, we tested the three cases: $k=0.7$ (non-anomalous case), $k=1$ (exact Wood anomaly), and $k=1.0001$ (near Wood anomaly) and numerically calculate the absolute error
\begin{align*}
    \mathrm{error}(n) := \max_{j=0,\ldots,2n-1} |\widetilde \varphi(t^{(n)}_j)-\widetilde{\bm \varphi}^{(n,m)}_j| + |c - c^{(n,m)}|,
\end{align*}
where $(\widetilde\varphi,c)$ and $(\widetilde{\bm \varphi}^{(n,m)},c^{(n,m)})$ are the solutions of $(\widetilde Q_k)$ and $(Q^{(n,m)}_k)$, respectively. 

\begin{figure}
    \centering
    \includegraphics[width=1.0\linewidth]{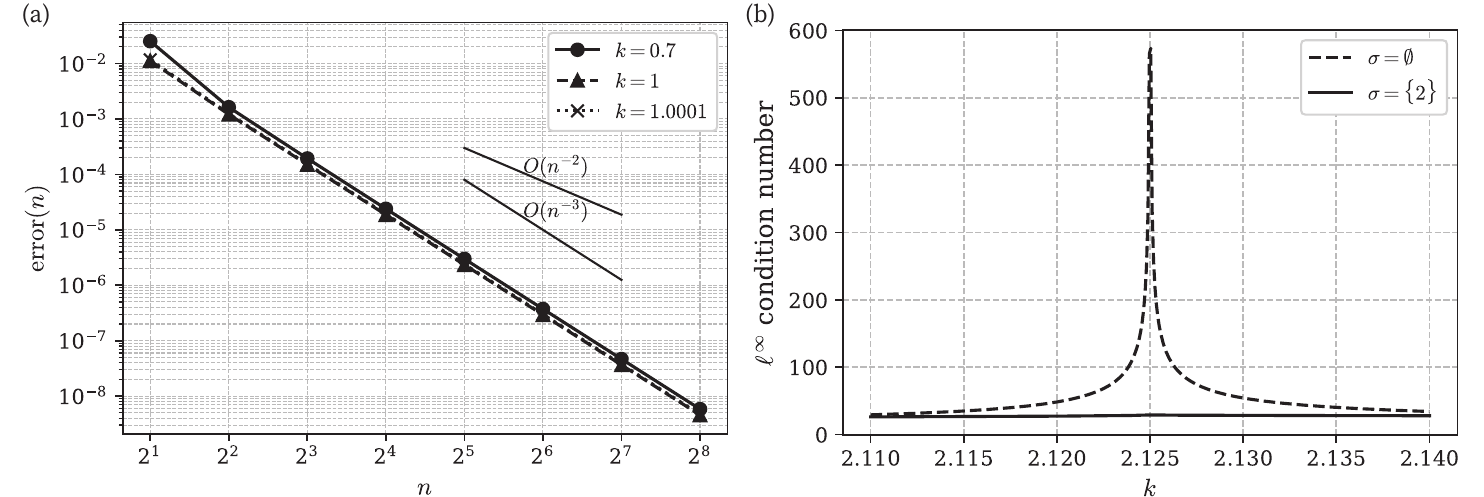}
    \caption{(a) Convergence test for the flat-surface problem. (b) $\ell^\infty$ condition number of $B^{(n,m)}_k$ for varying $k$ around a Wood anomaly frequency $k=2.125$.}
    \label{fig:result}
\end{figure}
\Cref{fig:result} (a) shows the numerical results of the convergence test for each value of $k$. We observe that the numerical error decays at the rate of $O(n^{-3})$, which is better than the theoretical result in \cref{thm:main-2} due to the higher regularity of $g$ and $\gamma$. 

\subsection{Rough surface}
We next consider a rough surface given by the function $\gamma(t)=(t, \pi|\sin(t/2)|^{2.5})$ for $t\in\mathbb R$.
To check the stability of the condition number, we fixed $\beta=\pi/4$ and varied $k$ around a Wood anomaly frequency $k=2.125$ and calculated the $\ell^\infty$ condition number of the matrix $B^{(n,m)}_k$. The result is shown in \cref{fig:result} (b). When $\sigma=\emptyset$ is chosen, i.e., the conventional integral equation is used, the condition number blows up at the anomaly. In the case of $\sigma=\{2\}$, the result indicates a bound on the condition number in the frequency interval, which is consistent with \cref{thm:main-2}.

\section*{Acknowledgments}
We would like to acknowledge K. Morishita for helpful advice and discussion. 
K.M. and H.I. are supported by JSPS KAKENHI Grant Number JP23K28103. K.M. is supported by JSPS KAKENHI Grant Number JP24K17191 and Hirose Foundation.
This work is supported by ‘Joint Usage/Research Center for Interdisciplinary Large-scale Information Infrastructures (JHPCN)’ and ‘High Performance Computing Infrastructure (HPCI)’ in Japan (Project ID: jh260048).


\end{document}